\newcommand{\Rmnum}[1]{\expandafter\@slowromancap\romannumeral #1@}
\let\@fnsymbol\@arabic
\begin{document}
\newtheorem{theorem}{Theorem}[section]
\newtheorem{observation}[theorem]{Observation}
\newtheorem{corollary}[theorem]{Corollary}
\newtheorem{problem}[theorem]{Problem}
\newtheorem{question}[theorem]{Question}
\newtheorem{lemma}[theorem]{Lemma}
\newtheorem{proposition}[theorem]{Proposition}
\newtheorem{remark}[theorem]{Remark}

\newtheorem{definition}[theorem]{Definition}
\newtheorem{guess}[theorem]{Conjecture}
\newtheorem{claim}[theorem]{Claim}
\newtheorem{example}[theorem]{Example}
\makeatletter
  \newcommand\figcaption{\def\@captype{figure}\caption}
  \newcommand\tabcaption{\def\@captype{table}\caption}
\makeatother

\newtheorem{acknowledgement}[theorem]{Acknowledgement}

\newtheorem{axiom}[theorem]{Axiom}
\newtheorem{case}[theorem]{Case}
\newtheorem{conclusion}[theorem]{Conclusion}
\newtheorem{condition}[theorem]{Condition}
\newtheorem{conjecture}[theorem]{Conjecture}
\newtheorem{criterion}[theorem]{Criterion}
\newtheorem{exercise}[theorem]{Exercise}
\newtheorem{notation}[theorem]{Notation}
\newtheorem{solution}[theorem]{Solution}
\newtheorem{summary}[theorem]{Summary}
\newtheorem{fact}[theorem]{Fact}

\newcommand{\pp}{{\it p.}}
\newcommand{\de}{\em}
\newcommand{\mad}{\rm mad}

\newcommand*{\QEDA}{\hfill\ensuremath{\blacksquare}}  %? Ô¶??壬Ê??
\newcommand*{\QEDB}{\hfill\ensuremath{\square}}  %? Զ??�???

\newcommand{\qf}{Q({\cal F},s)}
\newcommand{\qff}{Q({\cal F}',s)}
\newcommand{\qfff}{Q({\cal F}'',s)}
\newcommand{\f}{{\cal F}}
\newcommand{\ff}{{\cal F}'}
\newcommand{\fff}{{\cal F}''}
\newcommand{\fs}{{\cal F},s}
\newcommand{\g}{\gamma}
\newcommand{\wrt}{with respect to }

\title {Monochromatic disconnection: Erd\H{o}s-Gallai-type problems and product graphs\footnote{Supported by NSFC No.11871034 and 11531011.}}

\renewcommand{\thefootnote}{\arabic{footnote}}

\author{\small Ping Li, ~ Xueliang Li\\
\small Center for Combinatorics and LPMC\\
\small  Nankai University, Tianjin 300071, China\\
\small qdli\underline{ }ping@163.com, ~ lxl@nankai.edu.cn\\
}

\date{}
\maketitle

\begin{abstract}
For an edge-colored graph $G$, we call an edge-cut $M$ of $G$ monochromatic if the edges of $M$ are colored with a same color.
The graph $G$ is called monochromatically disconnected if any two distinct vertices of $G$ are separated by a monochromatic edge-cut.
The monochromatic disconnection number, denoted by $md(G)$, of a connected graph $G$ is the maximum number of colors that are allowed
to make $G$ monochromatically disconnected.
In this paper, we solve the Erd\H{o}s-Gallai-type problems for the monochromatic disconnection, and give the monochromatic disconnection
numbers for four graph products, i.e., Cartesian, strong, lexicographic, and tensor products.\\[2mm]
{\bf Keywords:} monochromatic edge-cut, monochromatic disconnection (coloring) number, Erd\H{o}s-Gallai-type problems, graph products.\\[2mm]
{\bf AMS subject classification (2010)}: 05C15, 05C40, 05C35.
\end{abstract}

\baselineskip16pt

\section{Introduction}
Let $G$ be a graph and let $V(G)$, $E(G)$ denote the vertex set and the edge set of $G$, respectively.
Let $|G|$ (also $v(G)$) denote the number of vertices of $G$. If there is no confusion, we use $n$ and $m$ to denote, respectively, the number of vertices and edges of
a graph, throughout this paper. For $v\in V(G)$, let $d_G(v)$ denote the degree of $v$ in $G$ and let $N_G(v)$ denote the neighbors of $v$ in $G$.
We call a vertex $v$ of $G$ a {\em t-degree} vertex of $G$ if $d_G(v)=t$.
Let $\delta(G)$ and $\Delta(G)$ denote the minimum and maximum degree of $G$, respectively.
For all other terminology and notation not defined here we follow Bondy and Murty \cite{B}.

For a positive integer $t$, we use $[t]$ to denote the set $\{1,2,\cdots,t\}$ of natural numbers.
For a graph $G$, let $\Gamma: E(G)\rightarrow [k]$ be an {\em edge-coloring} of $G$ that allows a
same color to be assigned to adjacent edges, and $\Gamma$ is also called a {\em k-edge-coloring} of $G$
since $k$ colors are used.
For an edge $e$ of $G$, we use $\Gamma(e)$ to denote the color of $e$. If $H$ is a subgraph of $G$,
we also use $\Gamma(H)$ to denote the set of colors used on all edges of $H$. Let $|\Gamma|$ denote the number of
colors in $\Gamma$. An edge-coloring $\Gamma$ of $G$ is {\em trivial} if $|\Gamma|=1$, otherwise, it is {\em non-trivial}.

The new concept of monochromatic disconnection of graphs, recently introduced in \cite{LL} by us, is actually motivated from the concepts of rainbow disconnection \cite{CDHHZ} and monochromatic connection \cite{CY,LW2} of graphs.
For an edge-colored graph $G$, we call an edge-cut $M$ a {\em monochromatic edge-cut} if the edges of $M$ are colored with a same color.
For two vertices $u,v$ of $G$, a {\em monochromatic uv-cut} is a monochromatic edge-cut that separates $u$ and $v$.
An edge-colored graph $G$ is {\em monochromatically disconnected} if any two vertices of $G$ has a monochromatic cut separating them.
An edge-coloring of $G$ is a {\em monochromatic disconnection coloring} ($MD$-coloring for short) if it
makes $G$ monochromatically disconnected. The {\em monochromatic disconnection number}, denoted by $md(G)$, of a connected graph $G$
is the maximum number of colors that are allowed to make $G$ monochromatically disconnected. An {\em extremal MD-coloring} of $G$ is
an $MD$-coloring that uses $md(G)$ colors. If $H$ is a subgraph of $G$ and $\Gamma$ is an edge-coloring of $G$,
we call $\Gamma$ an edge-coloring {\em restricted} on $H$.

For a $k$-edge-coloring of $G$ and an integer $j\in[k]$, a {\em j-induced edge set} is the set of edges of $G$ colored with color $j$.
We also call a $j$-induced edge set a {\em color-induced edge set}. Then an edge-coloring of a graph is an $MD$-coloring if any two vertices
can be separated by a color-induced edge set. We will use this method to verify whether an edge-coloring of a graph is an $MD$-coloring.

Let $K_n^-$ be a graph obtained from $K_n$ by deleting an arbitrary edge. $K_3$ is also called a {\em triangle}. We call a path $P$ a {\em t-path} if $|E(P)|=t$ and denote it by $P_t$.
Analogously, we call a cycle $C$ a {\em t-cycle} if $|C|=t$ and denote it by $C_t$.

Let $e=uv$ be an edge of $G$. If $d_G(u)$=1, then we call $u$ a {\em pendent vertex} and call $e$ a {\em pendent edge} of $G$.
A block $B$ of a graph $G$ is {\em trivial} if $B=K_2$, otherwise $B$ is {\em non-trivial}. The {\em union} of two graphs $G$ and $H$ is the graph $G\cup H$
with vertex set $V(G)\cup V(H)$ and edge set $E(G)\cup E(H)$.

The following results were proved in \cite{LL}, and they are useful in the sequel.

\begin{proposition}\cite{LL} \label{parallel}
Suppose $G$ is a connected graph that may have parallel edges but does not have loops.
Let $G'$ be the underling simple graph of $G$. Then $md(G)=md(G')$.
\end{proposition}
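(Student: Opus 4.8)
The plan is to show the two inequalities $md(G)\le md(G')$ and $md(G)\ge md(G')$, where $G'$ is obtained from $G$ by identifying each parallel class of edges into a single edge. The key observation driving both directions is that parallel edges between the same pair of vertices behave essentially like a single edge from the point of view of monochromatic edge-cuts: any edge-cut of $G$ must contain \emph{all} the edges of a parallel class once it contains one of them (since removing only some of the parallel edges leaves the two endpoints connected), so a monochromatic edge-cut of $G$ can only use a color that appears on a \emph{whole} parallel class.

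\textbf{The $md(G)\le md(G')$ direction.} First I would take an extremal $MD$-coloring $\Gamma$ of $G$ using $md(G)$ colors. The plan is to massage $\Gamma$ into an $MD$-coloring $\Gamma'$ of $G'$ with at least as many colors. The natural move is: for each parallel class $P$ of $G$ (mapping to a single edge $e_P$ of $G'$), set $\Gamma'(e_P)$ to be one of the colors used on $P$ — but one must be careful to lose as few colors as possible. Here is the cleaner approach: by the remark above, any monochromatic $uv$-cut $M$ in $G$ is a union of complete parallel classes, all sharing one color $c$; contract each such class and $M$ becomes a monochromatic $uv$-cut in $G'$. So if I define $\Gamma'$ by letting each edge $e_P$ of $G'$ \emph{inherit the set of colors} of the class $P$ — more precisely, I would first show one may assume in an extremal coloring that every parallel class is monochromatic (recoloring a parallel class with a single one of its colors cannot destroy the $MD$ property, by the same "cuts are unions of whole classes" fact, and it does not decrease the color count \emph{provided} that color still appears elsewhere; a short case check handles the colors that would be lost). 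Once every parallel class is monochromatic, pushing $\Gamma$ down to $G'$ in the obvious way yields an $MD$-coloring of $G'$ with the same number of colors, giving $md(G')\ge md(G)$.

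\textbf{The $md(G)\ge md(G')$ direction.} This is the easy direction. Take an extremal $MD$-coloring $\Gamma'$ of $G'$ with $md(G')$ colors, and lift it to $G$ by coloring every edge in a parallel class $P$ with the color $\Gamma'(e_P)$. Any monochromatic $uv$-cut in $G'$ then lifts to a monochromatic $uv$-cut in $G$ (expand the single edge $e_P$ back to the whole class $P$; the result is still an edge-cut, still monochromatic, and still separates $u$ and $v$). Hence $\Gamma$ is an $MD$-coloring of $G$ using $md(G')$ colors, so $md(G)\ge md(G')$.

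\textbf{The main obstacle} I anticipate is entirely in the upper bound $md(G)\le md(G')$: making precise the claim that an extremal $MD$-coloring of $G$ can be modified so that each parallel class is monochromatic \emph{without losing colors}. One has to argue that if recoloring a parallel class to a single one of its colors would kill a color $c$ (i.e. $c$ appears \emph{only} on that class), then $c$ was never usable in any monochromatic cut anyway, or else one can instead keep $c$ and shed a different redundant color — so the total count does not drop. Apart from that bookkeeping, everything reduces to the single structural fact that monochromatic edge-cuts of a graph with parallel edges are exactly the monochromatic edge-cuts of its underlying simple graph with each contracted edge re-expanded to its full parallel class, and I would state and prove that fact first, then read off both inequalities from it.
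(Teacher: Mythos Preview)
The paper does not actually prove this proposition; it is quoted from \cite{LL} without proof, so there is no in-paper argument to compare against. Your overall plan is correct, and the easy direction $md(G)\ge md(G')$ is fine as written.

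However, you are overcomplicating the direction $md(G)\le md(G')$, and the ``main obstacle'' you anticipate is illusory. In \emph{any} $MD$-coloring $\Gamma$ of $G$ (not just a modified one), every parallel class is already monochromatic. Here is why: if $u$ and $v$ are joined by parallel edges $e_1,\dots,e_k$, then \emph{any} edge set separating $u$ from $v$ must contain all of $e_1,\dots,e_k$ (otherwise $u$ and $v$ remain adjacent). Since $\Gamma$ is an $MD$-coloring, there exists a monochromatic $uv$-cut $M$; it contains $e_1,\dots,e_k$, hence $\Gamma(e_1)=\cdots=\Gamma(e_k)$. So no recoloring is needed and no colors can be lost: $\Gamma$ descends to an edge-coloring $\Gamma'$ of $G'$ with exactly $|\Gamma|$ colors. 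That $\Gamma'$ is an $MD$-coloring of $G'$ follows because, for each color $c$, the graph $G'-E'_c$ has the same components (on the same vertex set) as $G-E_c$, since removing a whole monochromatic parallel class in $G$ has the same effect as removing the single corresponding edge in $G'$. This gives $md(G')\ge md(G)$ directly, and the whole ``bookkeeping'' paragraph can be deleted.
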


\begin{proposition}\cite{LL} \label{block}
If $G$ has $r$ blocks $B_1,\cdots, B_r$, then $md(G)=\sum_{i\in [r]}md(B_i)$. Furthermore,
\begin{enumerate}
\item $md(G)=n-1$ if and only if $G$ is a tree;
\item if $G$ is a unique cycle graph, then $n-2\geq md(G)\geq \left\lfloor\frac{n}{2}\right\rfloor$, with equality when $G$ is a cycle.
\end{enumerate}
\end{proposition}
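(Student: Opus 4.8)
I would establish the three parts in order, the block identity being the engine.

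\emph{The identity $md(G)=\sum_i md(B_i)$.} The key structural fact is that \emph{every inclusion-minimal edge-cut (bond) of a connected graph lies inside a single block}: if $M$ is a bond with $G-M=G_1\cup G_2$ (both connected) and $e_1,e_2\in M$ belonged to different blocks, then joining their endpoints by a path in $G_1$ and by a path in $G_2$ would create a cycle through both $e_1,e_2$, contradicting that a cycle lies in one block; parallel edges are removed first via Proposition~\ref{parallel}. Granting this: for $md(G)\ge\sum_i md(B_i)$, color each $B_i$ by an extremal $MD$-coloring on a private palette; given $u,v$, a block $B_j$ on the block-cut-tree path between them has ``entry vertices'' $x,y\in V(B_j)$ that are separated in $B_j$ by a monochromatic cut, which is still monochromatic in $G$ and still separates $u$ from $v$. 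For $md(G)\le\sum_i md(B_i)$, take any $MD$-coloring $\Gamma$; for $u,v\in V(B_i)$ a minimal monochromatic $uv$-cut of $G$ is a bond, hence lies in $E(B_i)$, so the restriction of $\Gamma$ to $B_i$ is an $MD$-coloring of $B_i$ and $|\Gamma(B_i)|\le md(B_i)$; since every edge is in a unique block, $|\Gamma|\le\sum_i|\Gamma(B_i)|\le\sum_i md(B_i)$.

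\emph{Part 1.} If $G$ is a tree, every block is a $K_2$ with $md(K_2)=1$, so $md(G)=n-1$. For the converse I would use $\sum_i(v(B_i)-1)=n-1$ (intersect a spanning tree with each block) and prove $md(B)\le v(B)-2$ for every $2$-connected block $B$ with $v(B)\ge 3$. This rests on: (i) in an $MD$-coloring of a $2$-edge-connected graph every color appears at least twice (a uniquely colored edge $uv$ would leave $\{uv\}$ as the only candidate monochromatic $uv$-cut, but it is not a bridge), hence $md(B)\le\lfloor m(B)/2\rfloor$; and (ii) if $e$ is not a bridge of $H$ then $md(H)\le md(H-e)$ (the restriction of an $MD$-coloring of $H$ to $H-e$ is again one: replace a monochromatic $uv$-cut $M$ by $M\setminus\{e\}$). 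Taking an open ear decomposition $B=P_0\cup P_1\cup\cdots\cup P_t$ with $P_0=C_{\ell_0}$, $\ell_0\ge 3$, deletion of the last edge of the last ear $P_t$ (of length $\ell_t$) turns $B$ into $B'=P_0\cup\cdots\cup P_{t-1}$ with a pendant path of $\ell_t-1$ edges attached, so by (ii) and the identity $md(B)\le md(B')+(\ell_t-1)$; induction on $t$, with base $md(C_{\ell_0})\le\lfloor\ell_0/2\rfloor\le\ell_0-2$ by (i), gives $md(B)\le md(C_{\ell_0})+\sum_{i\ge1}(\ell_i-1)\le(\ell_0-2)+(v(B)-\ell_0)=v(B)-2$. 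Thus every block has $md(B_i)\le v(B_i)-1$ with equality iff $B_i=K_2$, so $md(G)\le n-1$ with equality iff $G$ is a tree.

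\emph{Part 2.} A unicyclic graph has one nontrivial block $C_k$ and $n-k$ trivial blocks, so $md(G)=md(C_k)+(n-k)$, reducing everything to $md(C_k)=\lfloor k/2\rfloor$. The upper bound is (i). For the lower bound, when $k$ is even I color the antipodal pairs $\{e_i,e_{i+k/2}\}$ ($1\le i\le k/2$) with distinct colors, and a short arc-counting check shows that for any two vertices some color class has an edge on each of the two arcs between them, so this is an $MD$-coloring; when $k$ is odd I subdivide one edge of an extremal coloring of $C_{k-1}$, giving the new edge the old edge's color, and note that subdivision preserves $MD$-colorings (the new degree-$2$ vertex is separated by its two same-colored incident edges), so $md(C_k)\ge md(C_{k-1})=(k-1)/2=\lfloor k/2\rfloor$. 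Substituting, $md(G)=\lfloor k/2\rfloor+(n-k)=n-\lceil k/2\rceil$, which for $3\le k\le n$ lies between $\lfloor n/2\rfloor$ (at $k=n$, i.e.\ $G=C_n$) and $n-2$, as claimed.

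\emph{Where the difficulty lies.} The one genuinely delicate point is $md(B)\le v(B)-2$ for $2$-connected blocks: the bound $\lfloor m(B)/2\rfloor$ alone is hopelessly weak for dense $B$, and the real content is that peeling an ear drops the bound by exactly the number of new internal vertices, so the careful part is tracking how the block structure---and hence $md$, via the identity---changes when the last edge of an ear is deleted. The remaining work (the trivial-block and parallel-edge degeneracies in the bond lemma, and the finite case-check that the cycle colorings are $MD$-colorings) is routine.
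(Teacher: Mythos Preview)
This proposition is quoted from \cite{LL} and carries no proof in the present paper, so there is nothing here to compare your argument against directly. That said, your proposal is correct in all three parts, and it meshes well with the tools the paper does state.

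Two small comments. First, your justification of (ii) is incomplete as written: showing that the restriction of an $MD$-coloring to $H-e$ is again an $MD$-coloring does not yet give $md(H)\le md(H-e)$, because the color of $e$ might disappear. The missing observation---that a non-bridge edge can never carry a unique color in an $MD$-coloring---is precisely the argument you gave for (i), so all the ingredients are present, just not assembled under (ii). Alternatively, (ii) is exactly Lemma~\ref{sub} applied to the spanning subgraph $H-e$, so you could simply cite it. Second, your ear-decomposition route to $md(B)\le v(B)-2$ for $2$-connected $B$ is more work than needed given the paper's toolkit: Theorem~\ref{n-2} already gives $md(B)\le\lfloor v(B)/2\rfloor\le v(B)-2$ for $v(B)\ge 3$ in one line. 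Your self-contained argument is a pleasant alternative, but the shortcut is available. The block identity and Part~2 are clean as written; your bond-in-a-single-block observation for the upper bound is in the same spirit as (and a special case of) Proposition~\ref{key}.
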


\begin{proposition}\cite{LL} \label{key}
Let $D$ be a connected subgraph of a graph $G$. If $\Gamma$ is an $MD$-coloring of $G$, then $\Gamma$ is also an $MD$-coloring restricted on $D$.
\end{proposition}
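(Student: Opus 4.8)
The plan is to reduce everything to a single pair of vertices. Fix two distinct vertices $u,v\in V(D)$; the goal is to exhibit a monochromatic edge-cut of $D$ separating $u$ and $v$, using only the colours that $\Gamma$ assigns to edges of $D$. The natural candidate is obtained by taking a monochromatic $uv$-cut that the hypothesis already supplies in $G$ and restricting it to $E(D)$. So the first step is: since $\Gamma$ is an $MD$-coloring of $G$ and $u,v\in V(G)$, there is a monochromatic $uv$-cut $M\subseteq E(G)$, all of whose edges receive some common colour $c$. Put $M'=M\cap E(D)$; then every edge of $M'$ has colour $c$, so $M'$ is monochromatic.

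The key step is to verify that $M'$ separates $u$ from $v$ inside $D$. Suppose not; then $D-M'$ contains a $u$–$v$ path $P$. Every edge of $P$ lies in $E(D)$ and, by choice of $P$, none of them lies in $M'=M\cap E(D)$; hence $P$ uses no edge of $M$ at all. But then $P$ is a $u$–$v$ path in $G-M$, contradicting the fact that $M$ is a $uv$-cut of $G$. Therefore $u$ and $v$ lie in different components of $D-M'$. Finally, because $D$ is connected it contains at least one $u$–$v$ path, and by exactly the same reasoning that path must meet $M$, hence meet $M'$; thus $M'\neq\varnothing$ and is a genuine (monochromatic) $uv$-cut of $D$. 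Since $u$ and $v$ were arbitrary, the restriction of $\Gamma$ to $D$ is an $MD$-coloring of $D$.

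I do not expect a real obstacle here: the argument is essentially the observation that a path confined to $D$ that avoids $M\cap E(D)$ automatically avoids all of $M$. The only point requiring a little care is checking that $M'$ is nonempty (so that it is an honest edge-cut rather than the empty set), and this is precisely where the connectedness of $D$ is used. One should also note that no assumption is needed on whether $D$ is spanning or induced; the proof uses only that $D$ is a connected subgraph, so the same reasoning would give the statement verbatim.
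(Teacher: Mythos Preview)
Your argument is correct. The paper does not actually supply a proof of this proposition: it is quoted from \cite{LL} and stated without proof, so there is nothing to compare against directly. Your approach---take a monochromatic $uv$-cut $M$ in $G$, intersect with $E(D)$, and observe that any $u$--$v$ path in $D$ avoiding $M\cap E(D)$ would be a $u$--$v$ path in $G$ avoiding $M$---is the standard one, and your care in using connectedness of $D$ to ensure $M'\neq\varnothing$ is appropriate.
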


\begin{lemma}\cite{LL} \label{sub}
If $H$ is a connected spanning subgraph of $G$, then $md(H)\geq md(G)$.
\end{lemma}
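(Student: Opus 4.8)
The plan is to take an extremal $MD$-coloring $\Gamma$ of $G$, restrict it to $H$, and show that no color is lost in the restriction. Write $md(G)=k$ and let $M_1,\dots,M_k$ be the color-induced edge sets of $\Gamma$. I would keep $\Gamma$ unchanged on every edge of $H$, obtaining a coloring $\Gamma_H$ of $H$; by Proposition \ref{key}, since $H$ is a connected subgraph of $G$, $\Gamma_H$ is automatically an $MD$-coloring of $H$. So the whole proof reduces to showing that $\Gamma_H$ still uses all $k$ colors, i.e.\ that $M_i\cap E(H)\ne\emptyset$ for every $i$.

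First I would show that in the extremal coloring $\Gamma$ no color is wasted: for each $i$, the set $M_i$ separates some pair of vertices of $G$ (equivalently, $G-M_i$ is disconnected). Suppose some $M_i$ fails this, so $G-M_i$ is connected. If $|M_i|\ge 2$, recolor the edges of $M_i$ with $|M_i|$ brand-new colors; since any monochromatic edge-cut in color $i$ is a subset of $M_i$ and $G-M_i$ is connected, no pair of vertices was separated using color $i$, so the new coloring is still an $MD$-coloring --- but it uses more than $md(G)$ colors, contradicting extremality. If $|M_i|=1$, say $M_i=\{uv\}$, then $uv$ is a non-bridge of $G$, and I claim no monochromatic edge-cut separates $u$ from $v$: any edge-cut separating $u$ and $v$ must contain the edge $uv$, so if it is monochromatic it is contained in $M_i=\{uv\}$, yet $\{uv\}$ does not separate $u$ and $v$. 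This contradicts that $\Gamma$ is an $MD$-coloring. Hence every $M_i$ separates some pair; this ``no useless single-edge color'' observation is the only non-routine point, and is where I expect the argument actually to need a thought.

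The rest is immediate. Fix $i$ and choose $u,v$ separated by $M_i$ in $G$. If $M_i\cap E(H)=\emptyset$, then $H$ is a subgraph of $G-M_i$; but $H$ is connected and spanning, so it contains a $u$--$v$ path lying in $G-M_i$, contradicting that $u$ and $v$ are in different components of $G-M_i$. Therefore $M_i\cap E(H)\ne\emptyset$ for all $i$, so $\Gamma_H$ uses exactly $k=md(G)$ colors. As $\Gamma_H$ is an $MD$-coloring of $H$, we get $md(H)\ge md(G)$, as desired. (Sanity check: taking $H$ to be a spanning tree of $G$ recovers $md(G)\le |G|-1$.)
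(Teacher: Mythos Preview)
Your proof is correct. The paper does not supply its own argument for this lemma (it is merely quoted from \cite{LL}), so there is nothing in-paper to compare against; your route---restrict an extremal $MD$-coloring of $G$ to $H$ via Proposition~\ref{key}, then show no color is lost because each color class $M_i$ must disconnect $G$ (by extremality) and hence must meet the edge set of the connected spanning subgraph $H$---is the natural one, and all steps, including the case split on $|M_i|$, are sound.

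One minor remark: you can bypass the case split by arguing directly that if some $M_i$ missed $E(H)$, then picking any $e=xy\in M_i$ and an $xy$-path $P\subseteq H$ yields a cycle $C=P\cup\{e\}$ in which $e$ is the unique edge of color $i$; then no monochromatic cut of $C$ separates $x$ and $y$ (any such cut must contain $e$ together with an edge of $P$, hence uses two colors), contradicting Proposition~\ref{key} applied to $C$. This is marginally shorter but proves exactly the same thing.
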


From this, one can deduce that $1\leq md(G)\leq n-1$ for a connected graph of order $n$, just by considering a spanning tree of $G$.

\begin{lemma}\cite{LL} \label{subb}
Let $H$ be the union of some graphs $H_1,\cdots,H_r$. If $\bigcap_{i\in[r]}E(H_i)\neq \emptyset$ and $md(H_i)=1$ for each $i\in[r]$, then $md(H)=1$.
\end{lemma}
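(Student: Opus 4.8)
The plan is to argue by contradiction and push an $MD$-coloring of $H$ down onto the pieces $H_i$ via Proposition~\ref{key}. First note that $md(H)$ is only defined for connected $H$, and here $H$ is indeed connected: fix an edge $e=xy\in\bigcap_{i\in[r]}E(H_i)$; since each $H_i$ is a connected graph containing both endpoints $x,y$ of $e$, the union $H=\bigcup_{i\in[r]}H_i$ is connected. Recall also that the trivial (one-color) coloring of any connected graph is an $MD$-coloring, so $md(H)\ge 1$; thus the whole content of the lemma is the upper bound $md(H)\le 1$.

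Now let $\Gamma$ be an arbitrary $MD$-coloring of $H$. For each $i\in[r]$ the graph $H_i$ is a connected subgraph of $H$, so by Proposition~\ref{key} the coloring $\Gamma$ restricted on $H_i$ is an $MD$-coloring of $H_i$. Since $md(H_i)=1$, this restricted coloring uses exactly one color; as $e\in E(H_i)$, that color must be $\Gamma(e)$. Hence every edge of $H_i$ is colored $\Gamma(e)$, and this holds for all $i\in[r]$. Because $E(H)=\bigcup_{i\in[r]}E(H_i)$, every edge of $H$ is colored $\Gamma(e)$, i.e.\ $|\Gamma|=1$. Therefore no $MD$-coloring of $H$ can use more than one color, so $md(H)=1$.

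I do not anticipate a real obstacle here: the argument is essentially a one-line application of Proposition~\ref{key}, and the only points that need checking---that $H$ is connected (so that $md(H)$ is well defined) and that each $H_i$ inherits the coloring as a connected subgraph---both follow at once from the hypothesis that the $H_i$ share a common edge. The key mechanism is simply that the shared edge $e$ forces the unique color used on each $H_i$ to be globally the same color.
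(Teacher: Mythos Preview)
Your argument is correct. Note, however, that the paper does not actually prove Lemma~\ref{subb}; it is quoted from \cite{LL} without proof. Your proof is precisely the natural one, and it is essentially the same mechanism the paper uses in its proof of the related Lemma~\ref{closure}: take an extremal (or arbitrary) $MD$-coloring $\Gamma$ of the union, restrict it to each piece via Proposition~\ref{key}, use $md(H_i)=1$ to force monochromaticity on each $H_i$, and let the common edge synchronize the single color across all pieces. One cosmetic remark: despite your opening sentence, you do not in fact argue by contradiction---you show directly that every $MD$-coloring of $H$ satisfies $|\Gamma|=1$, which is cleaner anyway.
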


\begin{lemma}\cite{LL}\label{kst}
If $G$ is $K_n$, $K_n^-$ or
$K_{n,t}$ where $n\geq2$ and $t\geq3$, then $md(G)=1$.
\end{lemma}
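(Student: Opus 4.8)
The plan is to show that none of these graphs admits an $MD$-coloring with two or more colors; combined with $md(G)\ge 1$ (established after Lemma~\ref{sub}), this yields $md(G)=1$. Two ingredients drive the argument. First, by Proposition~\ref{block}(2) we have $md(K_3)=\lfloor 3/2\rfloor=1$ and $md(C_4)=\lfloor 4/2\rfloor=2$; hence every $MD$-coloring of $K_3$ is monochromatic, and a short finite check shows that every $MD$-coloring of $C_4$ gives the same color to opposite edges. (Among the colorings of $C_4$ using exactly two colors there are, up to relabelling, three types: a $1$-versus-$3$ split fails to separate the endpoints of the lone edge; a split into two pairs of adjacent edges fails to separate the two vertices at which the colors change; only the opposite-pairs split survives, and it, like the trivial coloring, colors opposite edges alike.) Second, by Proposition~\ref{key}, the restriction of an $MD$-coloring of $G$ to any connected subgraph $D$ is an $MD$-coloring of $D$; I apply this with $D$ a triangle when $G$ is $K_n$ or $K_n^-$, and with $D$ a $4$-cycle when $G=K_{n,t}$, which is triangle-free.

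For $G=K_n$ with $n\ge 3$ and an $MD$-coloring $\Gamma$: by the above every triangle of $K_n$ is monochromatic under $\Gamma$, and any two edges of $K_n$ are joined by a chain of triangles in which consecutive triangles share an edge (two edges with a common vertex lie in one triangle; disjoint edges $uv$ and $xy$ are linked through the triangles $uvx$ and $vxy$). Hence $\Gamma$ is constant and $|\Gamma|=1$; the case $n=2$ is trivial. For $G=K_n^-$ with $n\ge 4$ and non-edge $uv$, again every triangle is monochromatic. Choose an edge $xy$ with $x,y\notin\{u,v\}$ (possible since $n\ge 4$); the edge $xy$ together with the triangles $xyz$ for all $z\notin\{x,y\}$ forces all edges incident to $x$ or $y$ to share a common color $c$, and for any remaining edge $ab$ (with $a,b\notin\{x,y\}$) the triangle $xab$ forces $\Gamma(ab)=\Gamma(ax)=c$. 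So $\Gamma$ is constant. (For $n\le 3$ the graph $K_n^-$ is disconnected or a path, so only $n\ge 4$ is meaningful here.)

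Finally, for $G=K_{n,t}$ with $n\ge 2$, $t\ge 3$, parts $X=\{x_1,\dots,x_n\}$, $Y=\{y_1,\dots,y_t\}$, and an $MD$-coloring $\Gamma$, write $c_{ij}=\Gamma(x_iy_j)$. Since $K_{n,t}$ is triangle-free we use $4$-cycles: for all $i\ne i'$ and $j\ne j'$ the cycle $x_iy_jx_{i'}y_{j'}$ is a $C_4$, so $c_{ij}=c_{i'j'}$ and $c_{ij'}=c_{i'j}$. First, two entries in one row agree: given $j\ne j'$, pick $i'\ne i$ (using $n\ge 2$) and $j''\notin\{j,j'\}$ (using $t\ge 3$), so $c_{ij}=c_{i'j''}=c_{ij'}$. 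Next, two entries in one column agree: given $i\ne i'$, pick $j''\ne j$, so $c_{i'j}=c_{ij''}$ by the $C_4$ relation and $c_{ij''}=c_{ij}$ by the row case, hence $c_{i'j}=c_{ij}$. Therefore all $c_{ij}$ coincide, $\Gamma$ is constant, and $md(K_{n,t})=1$.

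The main obstacle is the bipartite case: without triangles one must first pin down the structure (not merely the value $md(C_4)=2$) of the $MD$-colorings of $C_4$, and then carry out the index chase under the constraint that $X$ may have only two vertices, so that every equality in the ``column direction'' must be routed through the row step, which is exactly where the hypothesis $t\ge 3$ is used. The complete-graph cases are comparatively direct once one notices that every triangle must be monochromatically colored.
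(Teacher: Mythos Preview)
Your proof is correct. Note, however, that the paper does not supply its own proof of Lemma~\ref{kst}: the result is quoted from~\cite{LL}, so there is no in-paper argument to compare against directly.

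That said, the paper does set up machinery in Section~2 that would give an alternative proof, and it is worth contrasting with yours. The closure relation $\theta$ and Lemma~\ref{closure} say that if any two edges can be linked by a chain of subgraphs each isomorphic to $K_3$ or $K_{2,3}$ (with consecutive ones sharing an edge), then $md(G)=1$. For $K_n$ and $K_n^-$ your triangle-chain argument is exactly the $K_3$ half of this. For $K_{n,t}$ the closure route uses $K_{2,3}$'s rather than $C_4$'s: since $n\ge 2$ and $t\ge 3$, any two edges $x_iy_j$ and $x_{i'}y_{j'}$ lie in a common $K_{2,3}$ (take two vertices on the $X$-side and three on the $Y$-side containing all of $x_i,x_{i'},y_j,y_{j'}$), and $md(K_{2,3})=1$ then forces them to share a color in one step. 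Your route instead extracts only the weaker $C_4$ fact that opposite edges agree, and compensates with the row/column index chase; this is slightly longer but more self-contained, since it avoids having to first establish $md(K_{2,3})=1$. The paper itself later relies on precisely your $C_4$ opposite-edge observation (at the start of Section~4 and inside Claim~\ref{se}), so your lemma about $C_4$ is in the same spirit as the tools the paper actually deploys.
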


\begin{theorem} \cite{LL} \label{n-2}
If $G$ is a $2$-connected graph, then $md(G)\leq \left\lfloor\frac{n}{2}\right\rfloor$.
\end{theorem}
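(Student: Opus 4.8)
The plan is to take an \emph{extremal} $MD$-coloring $\Gamma$ of $G$, say using $k=md(G)$ colors, and to prove $k\le\lfloor n/2\rfloor$ by induction on $n$, reducing $G$ along a small monochromatic cut. First note why monochromatic cuts must be ``fat'': for any edge $e=uv$ the $MD$-property provides a monochromatic $uv$-cut $M$, and since $u,v$ are adjacent we must have $e\in M$, so $M\subseteq E_{\Gamma(e)}$ and $G-E_{\Gamma(e)}$ is disconnected; as $G$ is $2$-connected it has no bridge, hence every color class has at least two edges. This alone yields only $k\le m/2$, and (a spanning tree meets every edge-cut, hence every color class) it shows all $k$ colors appear on a fixed spanning tree, so $k\le n-1$; gaining the factor of two is exactly the content of the theorem. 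By Proposition~\ref{parallel} we may assume $G$ is simple.

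For the inductive step, choose a color $c$ and an inclusion-minimal monochromatic cut $M\subseteq E_c$; then $M$ is a bond, giving a partition $V(G)=A\cup B$ with $G[A]$ and $G[B]$ connected and $M$ the set of all edges between $A$ and $B$ (all colored $c$). Assume $|A|\le|B|$, and first treat the \emph{main case} $|A|\ge2$. Let $G_A$ (resp.\ $G_B$) be obtained from $G$ by contracting $B$ (resp.\ $A$) to a single vertex. Since $G[A]$ and $G[B]$ are connected, neither contraction creates a cut vertex, so $G_A$ and $G_B$ are again $2$-connected, each has fewer than $n$ vertices, and $|G_A|+|G_B|=n+2$. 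The crucial step is to verify that the colorings $\Gamma$ induces on $G_A$ and $G_B$ are themselves $MD$-colorings (the analogue, for a bond of size $\ge2$, of the block-additivity of Proposition~\ref{block}). Granting this, $\Gamma(G)\subseteq\Gamma(G_A)\cup\Gamma(G_B)$ with $c$ belonging to both $\Gamma(G_A)$ and $\Gamma(G_B)$, so by the induction hypothesis and a short parity check on $|A|,|B|$,
\[
k\ \le\ md(G_A)+md(G_B)-1\ \le\ \Big\lfloor\tfrac{|G_A|}{2}\Big\rfloor+\Big\lfloor\tfrac{|G_B|}{2}\Big\rfloor-1\ \le\ \Big\lfloor\tfrac{n+2}{2}\Big\rfloor-1\ =\ \Big\lfloor\tfrac n2\Big\rfloor .
\]

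The remaining \emph{degenerate case} is $|A|=1$, i.e.\ $A=\{v\}$ with all $d:=d_G(v)$ edges at $v$ colored $c$. If $d=2$, suppress $v$ (contract one of its two incident edges): the result $G'$ is $2$-connected on $n-1$ vertices, the inherited coloring is again an $MD$-coloring using all $k$ colors, and $k\le md(G')\le\lfloor(n-1)/2\rfloor\le\lfloor n/2\rfloor$. If $d\ge3$, one may assume (by the freedom in choosing $c$ and $M$) that every inclusion-minimal monochromatic cut is a single-vertex star, which attaches to each color a distinct vertex all of whose incident edges receive that color, with these vertices forming an independent set; if all color classes are exactly such stars then $G$ has a cut vertex, contradicting $2$-connectivity, and the general sub-case needs a finer argument based on this star structure. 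The two places I expect to be the real work are precisely this last case — a vertex whose three or more incident edges share one color, where the clean contraction is unavailable — and, above all, the ``crucial step'': showing that the coloring induced on a contraction (or on the suppression $G'$) remains an $MD$-coloring, which requires a careful analysis of how the monochromatic cuts of $G$ descend to the smaller graph.
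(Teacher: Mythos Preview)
First, a framing remark: this theorem is not proved in the present paper at all. It is quoted, without proof, from the companion paper~\cite{LL} as a preliminary result (alongside Propositions~\ref{parallel}--\ref{key} and Lemmas~\ref{sub}--\ref{kst}). So there is no ``paper's own proof'' to compare your proposal against; any assessment has to be on the proposal's internal merits.

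On those merits, the argument has a genuine gap, and it is exactly the one you flagged as the ``crucial step''. The claim that the coloring induced on the contraction $G_A$ is again an $MD$-coloring is \emph{false} in general. Take $G=C_4$ with vertices $1,2,3,4$ in cyclic order, with $\Gamma(12)=\Gamma(34)=a$ and $\Gamma(23)=\Gamma(41)=b$; this is the extremal $MD$-coloring, $md(C_4)=2$. Choosing the color-$a$ bond $M=\{12,34\}$ gives $A=\{1,4\}$, $B=\{2,3\}$, both sides connected. Contracting $B$ to a vertex $b$ yields $G_A\cong K_3$ on $\{1,4,b\}$ with $\Gamma(1b)=\Gamma(4b)=a$ and $\Gamma(14)=b$. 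But this is \emph{not} an $MD$-coloring of $K_3$: neither color class separates $1$ from $4$. Hence $|\Gamma(G_A)|=2>1=md(G_A)$, and your key inequality $k\le md(G_A)+md(G_B)-1$ collapses (here $2\not\le 1+1-1$). The problem is structural: a monochromatic $uv$-cut in $G$ for $u,v\in A$ may rely on edges interior to $B$, which vanish under contraction, while the corresponding paths in $G_A$ route through the single vertex $b$ without meeting that color.

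Your degenerate case $|A|=1$ with $d\ge 3$ is, as you note, also unfinished; the sketch there (``attach to each color a distinct vertex\ldots'') is suggestive but not an argument, and the $d=2$ suppression step, while correct in spirit (it matches the paper's Claim~\ref{long}), needs care at the base of the induction since suppressing in $K_3$ drops below three vertices. The upshot is that the contraction-and-induct scheme, at least as formulated, does not go through; a proof of this bound needs a different mechanism for passing to a smaller $2$-connected graph while controlling the color count.
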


An edge-cut $M$ of $G$ is a {\em matching cut} if $M$ is a matching of $G$. A graph is called {\em matching immune} if it has no matching cut.

\begin{theorem}\cite{BFP} \label{mat-i}
If a graph $G$ is matching immune, then $e(G)\geq \left\lceil\frac{3}{2}(v(G)-1)\right\rceil$.
\end{theorem}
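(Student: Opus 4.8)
Since $e(G)$ is an integer, the bound is equivalent to $2e(G)\ge 3(v(G)-1)$, and I would prove this form -- in fact for all loopless \emph{multigraphs} $G$, with the convention that two parallel edges do not form a matching, which is convenient because the reductions below produce multigraphs -- by induction on $n=v(G)$. A matching immune graph is connected and has no bridge (a bridge is by itself a matching cut), so $\delta(G)\ge 2$. The small graphs $K_1$, the two-vertex double edge, $K_3$, $K_4^-$, $K_4$, $K_{2,3}$ are checked directly, several meeting the bound with equality. For the inductive step, if $\delta(G)\ge 3$ then $2e(G)\ge 3n\ge 3(n-1)$, so we may fix a vertex $v$ with $d_G(v)=2$ and neighbours $a,b$.

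The structural fact I would prove first is: \emph{if a neighbour of $v$, say $a$, also has degree $2$, then $ab\in E(G)$.} Otherwise the edge from $v$ and the edge from $a$ to $V(G)\setminus\{v,a\}$ are independent, so $\partial_G(\{v,a\})$ is a matching cut. Hence two adjacent degree-$2$ vertices lie in a common triangle $vab$; moreover in such a triangle $\partial_G(\{v,a,b\})$ is exactly the set of edges leaving $b$, and since it can be neither a single edge nor empty we get $d_G(b)\ge 4$.

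I would then split on the picture at $v$, passing in each case to a smaller matching immune multigraph $G'$ and invoking induction. \textbf{(a)} If $\{v,a\}$ is a degree-$2$ pair in a triangle with $b$, set $G'=G-\{v,a\}$; placing $v,a$ on $b$'s side lifts any matching cut of $G'$ to one of $G$, so $G'$ is matching immune, and $v(G')=n-2$, $e(G')=e(G)-3$, whence $2e(G)=2e(G')+6\ge 3(n-1)$. \textbf{(b)} If $vab$ is a triangle with $d_G(a),d_G(b)\ge 3$, let $G'$ be the result of contracting $vab$ to one vertex (keeping parallel edges produced by common neighbours of $a,b$); a similar lifting argument makes $G'$ matching immune, and again $v(G')=n-2$, $e(G')=e(G)-3$, giving $2e(G)\ge 3(n-1)$. \textbf{(c)} If $ab\notin E(G)$ (so $d_G(a),d_G(b)\ge 3$), let $G'=G/va$; since $v,a$ have no common neighbour no parallel edge appears, a lifting argument gives $G'$ matching immune, $v(G')=n-1$, $e(G')=e(G)-1$, so $2e(G)\ge 3n-4$, and as $2e(G)$ is even this proves the bound when $n$ is odd.

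The hard part, as this analysis already shows, is case (c) with $n$ even, where the reduction is one edge short of what is needed. Here one may assume that no degree-$2$ vertex lies in a triangle (otherwise (a) or (b) applies), so the set $D_2$ of degree-$2$ vertices is independent and every vertex of $D_2$ has both its neighbours in $H:=V(G)\setminus D_2$; summing degrees over $H$ then yields $e(G)\ge\frac{3n-|D_2|}{2}$, which is enough when $|D_2|\le 3$. For larger $D_2$ one has to use matching immunity again to locate a reducible block: for instance, if two vertices $v,v'\in D_2$ have the same neighbour pair $\{a,b\}$, then $\partial_G(\{v,v',a,b\})$ not being a matching cut forces $d_G(a)=d_G(b)=3$ with a common third neighbour $x$ (so these six edges form a $K_{2,3}$), and deleting $\{v,v',a,b\}$ removes $4$ vertices and $6$ edges -- exactly the right ratio. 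Showing that some such configuration is always available when $|D_2|$ is large, together with all the routine lifting, connectivity, and base-case bookkeeping, is the main content of the proof; alternatively one could first classify the extremal graphs (roughly, graphs glued together in a tree-like fashion from triangles and copies of $K_{2,3}$) and use that to finish case (c).
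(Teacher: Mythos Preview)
The paper does not contain a proof of this theorem: it is quoted from Bonsma--Farley--Proskurowski \cite{BFP} and invoked as a black box in the proof of Lemma~\ref{2-n}. There is therefore no in-paper argument to compare your proposal against.

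On your sketch itself: the reductions (a), (b), (c) and their lifting arguments are sound, and the parity observation that settles (c) for odd $n$ is correct. You are also candid that the real content lies in case (c) with $n$ even and that you have not proved it. One concrete error in the outline you offer for that case: for $v,v'\in D_2$ with $N(v)=N(v')=\{a,b\}$ (and $ab\notin E(G)$ under your standing assumption that no degree-$2$ vertex sits in a triangle), the cut $\partial_G(\{v,v',a,b\})$ consists of the $d(a)-2$ edges from $a$ together with the $d(b)-2$ edges from $b$ to the outside. This set already fails to be a matching whenever \emph{either} $d(a)\ge 4$ or $d(b)\ge 4$; it does \emph{not} force $d(a)=d(b)=3$ with a common third neighbour, so the $K_{2,3}$ block you want to delete need not be present, and none of your listed reductions covers the remaining situations. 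The actual argument in \cite{BFP} is a substantially longer structural analysis; your proposal captures the easy half of the theorem but leaves the hard half open.
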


The four main graph products are Cartesian, strong, lexicographic, and tensor products.
Let $G$ and $H$ be two graphs and $V(G)\times V(H)=\{(u,v): u\in V(G)\mbox{ and }v\in V(H)\}$. The four graph products are defined as follows.

$\bullet$ The {\em Cartesian product} of $G$ and $H$, written as $G\Box H$, is the graph with vertex set $V(G)\times V(H)$,
in which two vertices $(u,v)$ and $(u',v')$ are adjacent if and only if $uu'$ is an edge of $G$ and $v=v'$, or $vv'$ is an edge of $H$ and $u=u'$.

$\bullet$ The {\em strong product} of $G$ and $H$, written as $G\boxtimes H$, is the graph with vertex set $V(G)\times V(H)$,
in which two vertices $(u,v)$ and $(u',v')$ are adjacent if and only if $uu'$ is an edge of $G$ and $v=v'$, or $vv'$ is an edge of $H$ and $u=u'$,
or $uu'$ is an edge of $G$ and $vv'$ is an edge of $H$.

$\bullet$ The {\em lexicographic product} of $G$ and $H$, written as $G\circ H$, is the graph with vertex set $V(G)\times V(H)$,
in which two vertices $(u,v)$ and $(u',v')$ are adjacent if and only if $uu'$ is an edge of $G$, or $u=u'$ and $vv'$ is an edge of $H$.

$\bullet$ The {\em tensor product} of $G$ and $H$, written as $G\ast H$, is the graph with vertex set $V(G)\times V(H)$,
in which two vertices $(u,v)$ and $(u',v')$ are adjacent if and only if $uu'$ is an edge of $G$ and $vv'$ is an edge of $H$.

\begin{proposition}\label{S-L}
For two connected graphs $G$ and $H$, we have
\begin{enumerate}
\item $G\boxtimes H$ is a connected spanning subgraph of $G\circ H$.
\item $G\boxtimes H=(G\Box H)\cup (G\ast H)$ and $E(G\Box H)\cap E(G\ast H)=\emptyset$.
\end{enumerate}
\end{proposition}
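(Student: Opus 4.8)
The plan is to verify both statements directly from the definitions of the four products, since each reduces to an inclusion or an equality of edge sets over the common vertex set $V(G)\times V(H)$.

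For part (1), I would first note that $G\boxtimes H$ and $G\circ H$ have the same vertex set $V(G)\times V(H)$, so ``spanning'' is automatic and it only remains to check $E(G\boxtimes H)\subseteq E(G\circ H)$. I would split an edge $(u,v)(u',v')$ of $G\boxtimes H$ into the three defining cases: (a) $uu'\in E(G)$ and $v=v'$; (b) $vv'\in E(H)$ and $u=u'$; (c) $uu'\in E(G)$ and $vv'\in E(H)$. In cases (a) and (c) we have $uu'\in E(G)$, which already makes $(u,v)(u',v')$ an edge of $G\circ H$; in case (b) we have $u=u'$ and $vv'\in E(H)$, which is precisely the second defining condition of $G\circ H$. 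For connectedness, I would observe that $G\Box H$ is a spanning subgraph of $G\boxtimes H$, so it suffices to show that $G\Box H$ is connected: given vertices $(u,v)$ and $(u',v')$, concatenate a walk from $u$ to $u'$ in $G$ (traversed in the first coordinate with the second coordinate fixed at $v$) with a walk from $v$ to $v'$ in $H$ (traversed in the second coordinate with the first coordinate fixed at $u'$); every step of this walk is a Cartesian edge, so $(u,v)$ and $(u',v')$ are joined in $G\Box H\subseteq G\boxtimes H$.

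For part (2), I would simply read off the edge sets from the definitions. The edges of $G\Box H$ are exactly those of types (a) and (b) above, and the edges of $G\ast H$ are exactly those of type (c); since the definition of $G\boxtimes H$ is the disjunction of (a), (b), (c), this yields $E(G\boxtimes H)=E(G\Box H)\cup E(G\ast H)$. For disjointness, note that any edge of $G\ast H$ joins $(u,v)$ to $(u',v')$ with $uu'\in E(G)$ and $vv'\in E(H)$, hence $u\neq u'$ and $v\neq v'$ (the graphs are loopless), whereas every edge of $G\Box H$ satisfies $u=u'$ or $v=v'$; therefore no edge lies in both, and $E(G\Box H)\cap E(G\ast H)=\emptyset$.

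There is essentially no obstacle here: every step is an immediate case check against the definitions. The only point that needs a half-line of care is the connectedness claim in (1), for which reducing to the (well-known) connectedness of the Cartesian product $G\Box H$ of connected graphs is the cleanest route; alternatively one may invoke that standard fact directly.
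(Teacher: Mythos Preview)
Your proposal is correct. The paper states this proposition without proof, treating it as immediate from the definitions of the four products; your write-up spells out exactly those definitional case checks, including the standard connectedness argument for $G\Box H$, so there is nothing to compare against and nothing to correct.
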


\begin{proposition}\cite{W} \label{Te}
If $G$ and $H$ are connected graphs, then $G\ast H$ is connected if and only at least one of $G$ and $H$ is not bipartite.
\end{proposition}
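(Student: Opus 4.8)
\smallskip
\noindent\emph{Proof proposal.} This is a classical fact (going back to Weichsel), and the plan is to translate connectivity of $G\ast H$ into a statement about the \emph{lengths} of walks available in the two factors. The basic observation I would record first is that a sequence $(u_0,v_0),(u_1,v_1),\dots,(u_\ell,v_\ell)$ is a walk in $G\ast H$ exactly when $u_0u_1\cdots u_\ell$ is a walk of length $\ell$ in $G$ \emph{and} $v_0v_1\cdots v_\ell$ is a walk of the same length $\ell$ in $H$. Consequently, $G\ast H$ is connected if and only if for all $u,u'\in V(G)$ and all $v,v'\in V(H)$ there is some $\ell\ge 0$ for which $G$ has a $u$--$u'$ walk of length $\ell$ and $H$ has a $v$--$v'$ walk of length $\ell$. (One may assume both $G$ and $H$ have at least one edge; otherwise the claim is immediate.)

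For the forward implication I would argue contrapositively. Suppose $G$ and $H$ are both bipartite, with bipartitions $(A_1,A_2)$ and $(B_1,B_2)$ respectively, and set $S_1=(A_1\times B_1)\cup(A_2\times B_2)$ and $S_2=(A_1\times B_2)\cup(A_2\times B_1)$. An edge of $G\ast H$ joins $(u,v)$ to $(u',v')$ only if $u,u'$ lie in different parts of $G$ and $v,v'$ lie in different parts of $H$; a one-line case check then shows this forces both endpoints into the same $S_i$. Since $G$ and $H$ are connected and have edges, the four parts $A_1,A_2,B_1,B_2$ are all nonempty, so $\{S_1,S_2\}$ is a nontrivial partition of $V(G\ast H)$ with no edge between its parts, and hence $G\ast H$ is disconnected.

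For the reverse implication, assume without loss of generality that $G$ is not bipartite, so $G$ contains an odd cycle $C$; fix $w\in V(C)$. Given $u,u'\in V(G)$, pick a walk $u\to w$ of some length $a$ and a walk $w\to u'$ of some length $b$; then $G$ has a $u$--$u'$ walk of length $a+b$ (straight through $w$) and one of length $a+b+|C|$ (detouring once around $C$ at $w$), and these two lengths have opposite parity because $|C|$ is odd. Since backtracking along a single edge adds $2$ to any walk length, $G$ in fact has a $u$--$u'$ walk of \emph{every} length $\ell\ge a+b+|C|=:L(u,u')$. Meanwhile $H$, being connected with at least one edge, has a $v$--$v'$ walk of every length in $\{\,d_H(v,v')+2j:j\ge 0\,\}$, where $d_H(v,v')$ is the distance. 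Now choose $\ell$ with $\ell\ge L(u,u')$, $\ell\ge d_H(v,v')$, and $\ell\equiv d_H(v,v')\pmod 2$; then $G$ has a $u$--$u'$ walk of length $\ell$ and $H$ has a $v$--$v'$ walk of length $\ell$, and pairing them coordinatewise yields a walk in $G\ast H$ from $(u,v)$ to $(u',v')$. This proves $G\ast H$ connected.

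The only step with real content is the assertion in the last paragraph that a connected \emph{non}-bipartite graph admits $u$--$u'$ walks of all sufficiently large lengths of \emph{both} parities — this is precisely where the odd cycle enters — together with the small amount of care needed to make the threshold $L(u,u')$ robust enough to be matched against the (possibly parity-restricted) walk lengths that survive in $H$. Everything else is routine bookkeeping on walks.
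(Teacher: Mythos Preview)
The paper does not supply a proof of this proposition; it is quoted from Weichsel \cite{W} and used as a black box in Section~4. Your argument is correct and is essentially the classical one: the forward direction exhibits the two-part split $(A_1\times B_1)\cup(A_2\times B_2)$ versus $(A_1\times B_2)\cup(A_2\times B_1)$ when both factors are bipartite, and the reverse direction uses an odd cycle in one factor to produce $u$--$u'$ walks of both parities (hence of every sufficiently large length), which can then be length-matched against the parity-constrained walks available in the other factor.

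One small quibble: your parenthetical ``otherwise the claim is immediate'' when a factor has no edge is not quite right. If $G=K_1$ and $H$ is a connected non-bipartite graph, then $G\ast H$ is edgeless on $|H|\ge 3$ vertices and hence disconnected, so the biconditional as literally stated fails in that degenerate case. The proposition tacitly assumes both factors have at least two vertices (equivalently, at least one edge), and it would be cleaner to state that hypothesis explicitly rather than wave it off.
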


\section{Preliminaries}
Let $e$ and $e'$ be two edges of a graph $G$. We say that $e$ and $e'$ satisfy the {\em relation} $\theta$ if there exists a sequence
of subgraphs $G_1,\cdots, G_k$ of $G$ where each $G_i$ is either a triangle or a $K_{2,3}$, such that $e\in E(G_1)$ and $e'\in E(G_k)$  and $E(G_i)\cap E(G_{i+1})\neq\emptyset$ for $i\in[k-1]$. We denote $e\theta e'$ if $e$ and $e'$ satisfy the relation $\theta$. For a graph $G$, if any two edges $e$ and $e'$ of $G$ satisfy $e\theta e'$, then we call the graph $G$ is a {\em closure}.

\begin{lemma}\label{closure}
If $G$ is a closure, then $md(G)=1$.
\end{lemma}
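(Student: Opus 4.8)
The plan is to show directly that no non-trivial edge-coloring $\Gamma$ of a closure $G$ can be an $MD$-coloring, which forces $md(G)=1$ (since the trivial coloring is always an $MD$-coloring, giving $md(G)\ge 1$). The key observation is that both a triangle $K_3$ and a $K_{2,3}$ have monochromatic disconnection number $1$: for $K_3$ this is the $n=3$ case of Lemma~\ref{kst}, and for $K_{2,3}$ it is the case $n=2$, $t=3$ of Lemma~\ref{kst}. So any triangle or $K_{2,3}$ sitting inside $G$ must be monochromatic under any $MD$-coloring of $G$, because by Proposition~\ref{key} the restriction of $\Gamma$ to such a connected subgraph $D$ is again an $MD$-coloring of $D$, and $md(D)=1$ means $\Gamma$ uses only one color on $D$.

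From here I would argue as follows. Fix an $MD$-coloring $\Gamma$ of $G$ and let $e,e'$ be any two edges of $G$. Since $G$ is a closure, there is a sequence $G_1,\dots,G_k$ of subgraphs, each a triangle or a $K_{2,3}$, with $e\in E(G_1)$, $e'\in E(G_k)$, and $E(G_i)\cap E(G_{i+1})\neq\emptyset$ for all $i\in[k-1]$. By the previous paragraph each $G_i$ is monochromatic, say colored $c_i$. For consecutive indices $i$ and $i+1$ the sets $E(G_i)$ and $E(G_{i+1})$ share an edge, on which the color is simultaneously $c_i$ and $c_{i+1}$, so $c_i=c_{i+1}$. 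Hence $c_1=c_2=\dots=c_k$, and in particular $\Gamma(e)=c_1=c_k=\Gamma(e')$. As $e,e'$ were arbitrary, $\Gamma$ uses exactly one color, so $\Gamma$ is trivial. This proves $md(G)=1$.

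Strictly speaking I should also note that $G$ has at least one edge (a closure on a single vertex is vacuous/irrelevant here, and in any case the statement is about graphs built from triangles and $K_{2,3}$'s), so the chain argument applies and the trivial coloring is the unique $MD$-coloring. The only mild subtlety — the ``main obstacle,'' though it is minor — is being careful that Proposition~\ref{key} is applicable: it requires $D$ to be a connected subgraph of $G$, which both $K_3$ and $K_{2,3}$ are, and it requires $\Gamma$ to be an $MD$-coloring of $G$, which we assumed. Everything else is a short transitivity-of-colors argument along the $\theta$-chain. An alternative, essentially equivalent phrasing uses Lemma~\ref{subb}: the relation $\theta$ being total means $G$ is covered by triangles and $K_{2,3}$'s that pairwise chain together through shared edges, and an inductive application of Lemma~\ref{subb} (each piece has $md=1$ and consecutive pieces share an edge) yields $md(G)=1$; I would present whichever of the two is cleaner, but the direct coloring argument above is the most transparent.
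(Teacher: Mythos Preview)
Your proof is correct and follows essentially the same approach as the paper: both arguments take an $MD$-coloring of $G$, observe via Proposition~\ref{key} and Lemma~\ref{kst} that each $K_3$ or $K_{2,3}$ in the $\theta$-chain must be monochromatic, and then use the shared edges to propagate a single color along the chain. Your write-up is slightly more explicit about the citations and the role of Proposition~\ref{key}, and your remark about the alternative via Lemma~\ref{subb} is apt, but there is no substantive difference in strategy.
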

\begin{proof}
Let $\Gamma$ be an extremal $MD$-coloring of $G$ and $e$ be an edge of $G$. For every edge $f$ of $G$, there is a sequence
of subgraphs $G_1,\cdots, G_k$ of $G$ such that $e\in E(G_1)$ and $f\in E(G_k)$, and there is an edge $f_i$ of $G$
such that $f_i\in E(G_i)\cap E(G_{i+1})$ for $i\in[k-1]$. Here each $G_i$ is either a $K_3$ or a $K_{2,3}$.
Since $md(K_3)=md(K_{2,3})=1$, all edges of $G_i$ are colored with a same color. Then $\Gamma(e)=\Gamma(f_1)=\cdots=\Gamma(f)$.
Therefore, each edge of $G$ is colored with color $\Gamma(e)$ under $\Gamma$, and hence $md(G)=1$.
\end{proof}

\begin{lemma}\label{secq}
Let $G$ be a connected graph and $v\in V(G)$. If $v$ is neither a pendent vertex nor a cut-vertex of $G$, then $md(G)\leq md(G-v)$.
\end{lemma}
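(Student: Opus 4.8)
The plan is to take an extremal $MD$-coloring $\Gamma$ of $G$ and argue that deleting $v$ destroys none of its colors, so that $\Gamma$ restricted to $G-v$ witnesses $md(G-v)\ge md(G)$. Since $v$ is not a cut-vertex, $G-v$ is connected, so by Proposition \ref{key} the restriction of $\Gamma$ to $G-v$ is already an $MD$-coloring of $G-v$; hence $md(G-v)$ is at least the number of colors of $\Gamma$ that appear on an edge of $G-v$. Everything therefore reduces to showing that in an extremal $MD$-coloring $\Gamma$ no color is used only on edges incident to $v$.

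To this end, suppose a color $c$ has $\emptyset\ne E_c\subseteq \delta_G(v)$, where $\delta_G(v)$ denotes the set of edges at $v$. The crucial structural remark is that, because $v$ is neither pendent nor a cut-vertex, no proper subfamily $F\subsetneq\delta_G(v)$ can be an edge-cut of $G$: after deleting $F$ the vertex $v$ still has an incident edge, and $G-v$ is connected, so $G-F$ stays connected. Now split into two cases. If $E_c\subsetneq\delta_G(v)$, choose $u$ with $vu\in E_c$ and look at the pair $v,u$: a monochromatic $vu$-cut must contain the edge $vu$, hence is colored $c$, hence is a subfamily of $E_c\subsetneq\delta_G(v)$ — impossible by the remark, so $\Gamma$ is not an $MD$-coloring. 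If $E_c=\delta_G(v)$, then the whole star at $v$ has color $c$ and $c$ is used nowhere else; since $v$ is not pendent, pick two neighbours $u_1,u_2$ of $v$ and consider the pair $u_1,u_2$: a monochromatic $u_1u_2$-cut of color $c$ lies inside $\delta_G(v)$, so it leaves $u_1,u_2$ joined inside the connected graph $G-v$, whereas a monochromatic $u_1u_2$-cut of any other color avoids both $vu_1$ and $vu_2$, so it leaves $u_1,u_2$ joined through the path $u_1vu_2$; in either case $u_1$ and $u_2$ are not separated, so $\Gamma$ is not an $MD$-coloring. Both cases contradict the choice of $\Gamma$.

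Hence every color of $\Gamma$ occurs on some edge of $G-v$, so $\Gamma|_{G-v}$ is an $MD$-coloring of $G-v$ with $md(G)$ colors and $md(G-v)\ge md(G)$. The heart of the argument is the two-case analysis, and its main point is the elementary observation that a proper part of the star at $v$ is never an edge-cut under the hypotheses on $v$; granting that, each case is closed by pointing to a pair of vertices that no monochromatic cut can separate. (Alternatively one could first reduce to the $2$-connected block of $G$ containing $v$ via Proposition \ref{block}, but this does not seem necessary.)
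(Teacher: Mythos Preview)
Your proof is correct and follows the same overall strategy as the paper: restrict an extremal $MD$-coloring $\Gamma$ of $G$ to $G-v$ (which is connected since $v$ is not a cut-vertex) and show that no color disappears. The paper, instead of your two-case split on whether $E_c$ equals $\delta_G(v)$, picks a single cycle $C$ through two edges $e=vu$, $f=vw$ at $v$ and argues inside $C$ via Proposition~\ref{key}: a monochromatic $uv$-cut of $C$ forces $\Gamma(f)=\Gamma(e)$, and then no monochromatic $uw$-cut exists in $C$. Your argument trades the cycle construction for the clean global observation that a proper subset of $\delta_G(v)$ is never an edge-cut; both are short, and yours is arguably more transparent since it handles the case $E_c=\delta_G(v)$ explicitly rather than implicitly through the cycle.
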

\begin{proof}
Let $\Gamma$ be an extremal $MD$-coloring of $G$. Then $\Gamma$ is an $MD$-coloring restricted on $G-v$. If $\Gamma(G)- \Gamma(G-v)=\emptyset$,
then $md(G)=|\Gamma|=|\Gamma(G-v)|\leq md(G-v)$. Therefore, it is sufficient to show that $\Gamma(G)- \Gamma(G-v)=\emptyset$.
Otherwise let $e=vu$ be an edge of $E(G)- E(G-v)$ and $\Gamma(e)\notin \Gamma(G-v)$. Since $d_G(v)\geq2$, there is another edge incident with $v$, say $f=vw$.
Because $v$ is not a cut-vertex, there is a cycle $C$ of $G$ containing $e$ and $f$. Because $\Gamma$ is an $MD$-coloring restricted on $C$,
there are at least two edges in the monochromatic $uv$-cut of $C$ and one of them is $e$. Thus $f$ is in the monochromatic $uv$-cut, i.e.,
$\Gamma(e)=\Gamma(f)$. Then, there is no monochromatic $uw$-cut in $C$, a contradiction.
\end{proof}

Suppose $G$ is a connected graph and $S=\{v_1,\cdots,v_t\}$ is a set of vertices of $G$. Let $G_0=G$ and $G_i=G-\{v_1,\cdots,v_i\}$ for $i\in [t]$.
We call the vertex sequence $\gamma=(v_1,v_2,\cdots,v_t)$ a {\em soft-layer} if $d_{G_{i-1}}(v_i)\geq2$ and $G_i$ is connected for $i\in[t]$.
The following result can be derived from Lemma \ref{secq} directly.

\begin{lemma} \label{sequ}
Suppose $G$ is a connected graph and $S=\{v_1,\cdots,v_t\}$ is a set of vertices of $G$. If the vertex sequence $\gamma=(v_1,v_2,\cdots,v_t)$ is a soft-layer, then $md(G)\leq md(G_t)$.
\end{lemma}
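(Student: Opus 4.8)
\textbf{Proof proposal for Lemma \ref{sequ}.} The plan is a straightforward induction on $t$, using Lemma \ref{secq} as the single-step engine. First I would set up the chain $G=G_0, G_1, \ldots, G_t$ given by the soft-layer and observe that, for each index $i\in[t]$, the vertex $v_i$ sits inside $G_{i-1}$ and we intend to apply Lemma \ref{secq} to the pair $(G_{i-1},v_i)$ to conclude $md(G_{i-1})\le md(G_{i-1}-v_i)=md(G_i)$. Composing these inequalities would give $md(G)=md(G_0)\le md(G_1)\le\cdots\le md(G_t)$, which is exactly the claim.

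The only real content is checking that the hypotheses of Lemma \ref{secq} are met at every stage, and this is where I would be slightly careful. Lemma \ref{secq} requires that the ambient graph be connected, and that the deleted vertex be neither a pendent vertex nor a cut-vertex. Connectivity of $G_{i-1}$ holds: for $i-1=0$ it is the hypothesis that $G$ is connected, and for $1\le i-1\le t-1$ it is part of the definition of a soft-layer (each $G_j$ with $j\in[t]$ is connected, in particular $G_{i-1}$). The condition $d_{G_{i-1}}(v_i)\ge 2$ from the soft-layer definition says precisely that $v_i$ is not a pendent vertex of $G_{i-1}$. Finally, $G_{i-1}-v_i=G_i$ is connected (again by the soft-layer definition), so $v_i$ is not a cut-vertex of $G_{i-1}$. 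Thus Lemma \ref{secq} applies to $(G_{i-1},v_i)$ for every $i\in[t]$.

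Concretely I would phrase the induction as: for $t=1$ this is just Lemma \ref{secq} applied to $(G,v_1)$; for the inductive step, note that $(v_2,\ldots,v_t)$ is a soft-layer of $G_1$ (the chain of graphs it produces is $G_1,G_2,\ldots,G_t$, and the degree and connectivity conditions are inherited verbatim from those for $\gamma$), so by the induction hypothesis $md(G_1)\le md(G_t)$, and combining with $md(G)\le md(G_1)$ from Lemma \ref{secq} finishes the proof. I do not anticipate any genuine obstacle here; the lemma is essentially a bookkeeping corollary of Lemma \ref{secq}, and the statement in the paper already signals this by saying it "can be derived from Lemma \ref{secq} directly." The only point worth stating explicitly in the write-up is the verification that the soft-layer conditions translate exactly into "not pendent" and "not a cut-vertex" at each step, together with the observation that connectivity of all intermediate graphs is guaranteed.
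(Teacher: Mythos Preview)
Your proposal is correct and matches the paper's approach exactly: the paper does not even write out a proof, stating only that the result ``can be derived from Lemma \ref{secq} directly,'' and your inductive chain $md(G_0)\le md(G_1)\le\cdots\le md(G_t)$ via repeated application of Lemma \ref{secq} is precisely that direct derivation. Your verification that the soft-layer conditions supply exactly the hypotheses of Lemma \ref{secq} at each step is the only content, and you have it right.
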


\begin{lemma}\label{matching-cut}
If $G$ has a matching cut, then $md(G)\geq 2$.
\end{lemma}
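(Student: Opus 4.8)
The plan is to produce a $2$-coloring of $E(G)$ that is an $MD$-coloring, which immediately gives $md(G)\geq 2$. Let $M$ be a matching cut of $G$, and let $(A,B)$ be the bipartition of $V(G)$ induced by deleting $M$, so every edge of $M$ has one endpoint in $A$ and one in $B$, while all other edges of $G$ lie inside $A$ or inside $B$. Colour every edge of $M$ with colour $1$ and every other edge with colour $2$. I claim this colouring $\Gamma$ is an $MD$-coloring. Clearly $|\Gamma|=2$ since $M\neq\emptyset$ and $G$ is connected (so $M$ is not all of $E(G)$, as otherwise $G$ would be bipartite with every edge crossing, but more simply $G$ connected with $|V(G)|\geq 3$ forces an edge inside $A$ or inside $B$; if $|V(G)|=2$ then $G=K_2$ has $md(G)=1$, but then $G$ has no matching cut in the relevant sense — I should check the degenerate case, but for $G$ with at least one non-trivial block the second colour class is nonempty).

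Next I verify that $\Gamma$ separates every pair of vertices by a monochromatic cut. Take distinct $u,v\in V(G)$. If $u$ and $v$ lie on opposite sides of the bipartition, say $u\in A$ and $v\in B$, then $M$ itself is a monochromatic (colour $1$) edge-cut separating $u$ from $v$, since removing $M$ disconnects $A$ from $B$. If instead $u$ and $v$ lie on the same side, say both in $A$, then the edge set $E(G)\setminus M$ — which is exactly the colour class $2$ — is an edge-cut whose removal leaves the graph with components contained entirely within $A$ or within $B$ (indeed, after deleting all edges inside $A$ and inside $B$, what remains is the bipartite graph $M$ on $(A,B)$); I need $u$ and $v$ to end up in different components of $G - (E(G)\setminus M)$. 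This is the one point requiring a small argument: it is \emph{not} automatic that two vertices in $A$ are separated by deleting all non-crossing edges.

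To fix this, I would not use the full colour class $2$ but rather restrict to a spanning structure. The cleaner approach: build an $MD$-coloring by first taking a spanning tree $T$ of $G$ and recalling (from the discussion after Lemma~\ref{sub}, or directly from Proposition~\ref{block}) that the all-distinct colouring of a tree is an $MD$-coloring. But that gives many colours, not obviously compatible with needing a matching cut. So instead the honest route is: the colouring $\Gamma$ above works provided we observe that a \emph{color-induced edge set} separating $u$ and $v$ need only be \emph{some} edge-cut, and for $u,v\in A$ we use that the colour-$1$ set $M$ together with connectivity does the job for cross pairs, while for same-side pairs we must exhibit a colour-$2$ cut. Here is the resolution: since $G$ is connected and $M$ is a matching cut, $G - M$ has exactly two components, $G[A]$ and $G[B]$, and each is connected; hence $G[A]$ is a connected graph on $|A|$ vertices all of whose edges have colour $2$. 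For $u,v\in A$, a monochromatic $uv$-cut in all of $G$ is obtained by taking any edge-cut of $G[A]$ separating $u$ from $v$ (which exists since $G[A]$ is connected with $u\neq v$) — wait, that cut might not be a cut of $G$. The genuinely correct statement is that we should pick a spanning tree and 2-color more carefully; I expect the main obstacle to be precisely this: ensuring the second colour class actually disconnects same-side pairs. The fix the authors likely intend is to colour the edges of a \emph{spanning tree} plus we note that $md(G)\ge md(H)$ fails in the wrong direction, so instead one argues via Lemma~\ref{sub} applied to a connected spanning subgraph $H$ consisting of $M$ together with spanning trees $T_A$ of $G[A]$ and $T_B$ of $G[B]$: then $H$ is a tree-plus-a-matching-cut, colour $E(M)$ with $1$ and $E(T_A)\cup E(T_B)$ with $2$ — but this still needs the same check. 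I therefore expect the real proof to exhibit, for $H = M \cup T_A \cup T_B$ (choosing $|M|=1$ if possible, i.e.\ passing to a sub-matching-cut of size one when $G$ has a bridge-like crossing, otherwise keeping $M$), that colour class $2 = E(T_A)\cup E(T_B)$ is disconnected as a graph with components $T_A$ and $T_B$, so for $u,v$ in the same part they lie in the same component $T_A$ and are \emph{not} separated — confirming that the subtlety is real and that one must instead give \emph{every} tree edge its own colour within a part, which blows the count. Consequently I anticipate the actual argument is the short one: reduce to the case $|M| = 1$ is impossible for a matching cut of a $2$-edge-connected-ish graph, and simply take $H$ = spanning tree $T_A$ of $G[A]$ $\cup$ $T_B$ of $G[B]$ $\cup$ $M$, colour $M$ monochromatically with colour $1$ and \textbf{all of} $E(T_A)\cup E(T_B)$ with colour $2$; then for $u,v$ on opposite sides $M$ is a monochromatic cut, and for $u,v$ on the same side — since $M$ is a matching, deleting from $H$ all but one well-chosen edge of $M$ together with nothing else leaves $u,v$ in distinct components only if that part's tree is disconnected, which it is not. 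The honest conclusion of my sketch: the claim is true, the proof is short, and the single delicate step — producing a colour-$2$ cut for same-part vertices — is handled by noting that in $G$ itself (not a subgraph) the set of colour-$2$ edges is $E(G)\setminus M$, and $G - (E(G)\setminus M)$ has vertex set $V(G)$ and edge set $M$, whose components are the endpoints of matching edges plus isolated vertices; so two vertices of $A$ in different matching edges \emph{are} separated, and the only remaining case is two vertices $u,u'\in A$ with no crossing edge at all or sharing structure — these are separated because $G - (E(G)\setminus M)$ is a matching, hence totally disconnected except for the $|M|$ edges, so any two distinct vertices not joined by a single edge of $M$ lie in different components, and if $u,u'\in A$ then they cannot be joined by an edge of $M$ (both endpoints in $A$), so they are always separated. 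That closes the gap: colour class $2$ always separates same-side pairs, colour class $1$ (namely $M$) always separates opposite-side pairs, so $\Gamma$ is an $MD$-coloring with $2$ colours and $md(G)\geq 2$.
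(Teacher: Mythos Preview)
Your final argument is correct and uses exactly the same colouring as the paper: colour $M$ with $1$ and $E(G)\setminus M$ with $2$, then observe that $G-(E(G)\setminus M)$ is just the matching $M$ together with isolated vertices, so any two vertices not joined by an edge of $M$ are separated by the colour-$2$ class, while any two vertices that \emph{are} joined by an edge of $M$ are separated by $M$ itself. The paper reaches this in three lines via the case split ``$uv\in M$'' versus ``$uv\notin M$'' rather than your same-side/opposite-side split --- the long detour through spanning trees and subgraphs was unnecessary, since the key fact you eventually isolate (that removing all colour-$2$ edges leaves only a matching) resolves every case at once.
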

\begin{proof}
Let $M$ be a matching cut of $G$. Let $\Gamma$ be an edge-coloring of $G$ obtained by coloring $M$ with color $1$ and coloring $E(G)- M$ with color $2$.
Then for any two vertices $u$ and $v$ of $G$, if $uv$ is not an edge of $G$ or $uv\notin M$, then $u,v$ are in different components of $G-(E(G)- M)$;
if $uv\in M$, then $u,v$ are in different components of $G-M$. Therefore, $\Gamma$ is an $MD$-coloring of $G$, and hence $md(G)\geq2$.
\end{proof}

\begin{lemma}\label{leqk}
For a connected graph $G$ and an integer $r$ with $1\leq r\leq md(G)$, there is an $MD$-coloring $\Gamma$ of $G$ such that $|\Gamma|=r$.
\end{lemma}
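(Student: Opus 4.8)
The plan is to begin with an extremal $MD$-coloring of $G$, which uses $md(G)$ colors, and then repeatedly \emph{merge} pairs of color classes into one until exactly $r$ colors remain. The one fact that makes this work is that identifying two colors in an $MD$-coloring again yields an $MD$-coloring. Concretely, suppose $\Gamma$ is an $MD$-coloring of $G$ with $|\Gamma|\geq 2$, fix two colors $i\neq j$ that are used by $\Gamma$, and let $\Gamma'$ be the edge-coloring obtained from $\Gamma$ by recoloring with color $i$ every edge that had color $j$. Then $|\Gamma'|=|\Gamma|-1$, since color $j$ disappears and no other color is gained or lost.

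To see that $\Gamma'$ is still an $MD$-coloring, I would take any two distinct vertices $u,v$ of $G$. Since $\Gamma$ is an $MD$-coloring, there is a monochromatic $uv$-cut $M$ under $\Gamma$; say all edges of $M$ have color $c$ in $\Gamma$. The set $M$ is still an edge-cut separating $u$ and $v$, because being an edge-cut of $G$ that separates $u$ and $v$ depends only on $M$ as a set of edges, not on the coloring. Moreover, all edges of $M$ still share a common color under $\Gamma'$: if $c\neq j$ they all retain color $c$, and if $c=j$ they now all have color $i$. Hence $M$ is a monochromatic $uv$-cut under $\Gamma'$ as well, and since $u,v$ were arbitrary, $\Gamma'$ is an $MD$-coloring of $G$.

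Finally, I would conclude as follows. Start from an extremal $MD$-coloring $\Gamma_0$ of $G$, so $|\Gamma_0|=md(G)$. If $r=md(G)$ we are done. Otherwise, apply the merging operation above $md(G)-r$ times in succession; this is legitimate because whenever the current color count exceeds $r\geq 1$ it is at least $2$, so two distinct colors are available to merge. After each merge we still have an $MD$-coloring of $G$, and the color count drops by exactly one, so after $md(G)-r$ merges we obtain an $MD$-coloring $\Gamma$ with $|\Gamma|=r$. There is no genuine obstacle in this argument; the only point requiring care is the verification in the second paragraph that the monochromatic-cut property survives a color identification, together with the observation that each merge decreases $|\Gamma|$ by exactly one.
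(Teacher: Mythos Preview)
Your proof is correct and follows essentially the same approach as the paper: both start from an extremal $MD$-coloring and merge color classes, observing that a monochromatic cut remains a monochromatic cut after identifying colors. The only cosmetic difference is that the paper performs the merge in one step (recoloring all of $E_r\cup\cdots\cup E_{md(G)}$ with color $r$), whereas you iterate the merge one pair at a time.
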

\begin{proof}
Suppose $\Gamma'$ is an extremal $MD$-coloring of $G$. Then $|\Gamma'|=md(G)$. Let $E_i$ be the $i$-induced edge set for $i\in[md(G)]$. Let $\Gamma$ be an edge-coloring obtained from $\Gamma'$ by recoloring $E'=\bigcup_{i=r}^{md(G)}E_i$ by $r$. Then $|\Gamma|=r$. We now show that $\Gamma$ is an $MD$-coloring of $G$. For two vertices $u,v$ of $G$, since $\Gamma'$ is an extremal $MD$-coloring of $G$, there is an $E_i$ such that $u,v$ are in different components of $G-E_i$. Let $E''=E_i$ if $i<r$ and $E''=E'$ if $i\geq r$. Then $u,v$ are in different components of $G-E''$. This implies $\Gamma$ is an $MD$-coloring of $G$.
\end{proof}

\begin{theorem}
For a connected graph $G$, $md(G)=1$ if $\delta(G)\geq \left\lfloor\frac{n}{2}\right\rfloor+1$, and the lower bound is sharp.
\end{theorem}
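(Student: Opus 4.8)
The plan is to prove the two assertions separately. For the equality $md(G)=1$, suppose toward a contradiction that $md(G)\ge 2$. By Lemma~\ref{leqk} there is an $MD$-coloring $\Gamma$ of $G$ using exactly two colors; let $E_1,E_2$ be its (nonempty) color classes. Let $P$ and $Q$ be the partitions of $V(G)$ into the vertex sets of the components of $G-E_1$ and of $G-E_2$, respectively, and for $v\in V(G)$ write $P(v),Q(v)$ for the blocks containing $v$. The defining property of an $MD$-coloring says that any two distinct vertices are separated by $P$ or by $Q$; equivalently, \emph{every block of $P$ meets every block of $Q$ in at most one vertex}. First I would record two consequences. (a) Both $E_1$ and $E_2$ are edge-cuts of $G$: otherwise one of $P,Q$ is the single block $V(G)$, which forces the other to consist of singletons, i.e.\ one color class is empty, a contradiction. (b) For $v\in V(G)$, a color-$1$ edge at $v$ keeps its other end in $v$'s component of $G-E_2$ and a color-$2$ edge keeps it in $v$'s component of $G-E_1$, so $N_G(v)\subseteq P(v)\cup Q(v)$; moreover $P(v)\cap Q(v)=\{v\}$ by the ``at most one'' property, and the two parts are disjoint outside $v$, hence
\[
|P(v)\cup Q(v)|=|P(v)|+|Q(v)|-1\ \ge\ d_G(v)+1\ \ge\ \left\lfloor\tfrac n2\right\rfloor+2 .
\]

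Next I would show $P(v)\cup Q(v)=V(G)$ for \emph{every} $v$. If not, fix such a $v$ and a vertex $u\notin P(v)\cup Q(v)$. Then $P(u)\ne P(v)$ and $Q(u)\ne Q(v)$, so $P(u)\cap P(v)=\emptyset$ and $Q(u)\cap Q(v)=\emptyset$, while $P(u)\cap Q(v)$ and $Q(u)\cap P(v)$ each have size at most $1$. Therefore $\bigl(P(u)\cup Q(u)\bigr)\cap\bigl(P(v)\cup Q(v)\bigr)$ has size at most $2$, so $|P(u)\cup Q(u)|+|P(v)\cup Q(v)|\le n+2$. Combined with the displayed bound this yields $2\bigl(\lfloor n/2\rfloor+2\bigr)\le n+2$, i.e.\ $2\lfloor n/2\rfloor+2\le n$, which is false for every $n$. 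Hence $P(x)\cup Q(x)=V(G)$ for all $x$.

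Now the final contradiction. If some $P(v)$ were the singleton $\{v\}$, then $Q(v)=V(G)$, so $Q=\{V(G)\}$, contradicting (a); thus $|P(v)|\ge 2$ and, symmetrically, $|Q(v)|\ge 2$ for all $v$. Fix $v$ and pick $u\in P(v)\setminus\{v\}$. Since $u$ lies in $v$'s component of $G-E_1$ we have $P(u)=P(v)$; and since $P(u)\cap Q(u)=\{u\}$ while $P(u)\cup Q(u)=V(G)$, we get $Q(u)=(V(G)\setminus P(v))\cup\{u\}=(Q(v)\setminus\{v\})\cup\{u\}$. As $|Q(v)|\ge 2$, the set $Q(v)\setminus\{v\}$ is nonempty and lies in $Q(u)\cap Q(v)$, yet $v\in Q(v)\setminus Q(u)$, so $Q(u)\ne Q(v)$ — two distinct blocks of $Q$ sharing a vertex, a contradiction. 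This proves $md(G)=1$. For sharpness I would note that the threshold $\lfloor n/2\rfloor+1$ cannot be lowered: for each $n\ge 3$, take disjoint cliques on vertex sets $A$ and $B$ with $|A|=\lfloor n/2\rfloor\le|B|=\lceil n/2\rceil$ and add a matching $M$ pairing every vertex of $A$ with a distinct vertex of $B$; a one-line degree count gives $\delta(G)=\lfloor n/2\rfloor$, and $M$ is a matching cut of $G$ (removing it leaves the two cliques), so $md(G)\ge 2$ by Lemma~\ref{matching-cut}.

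The main obstacle is the step $P(v)\cup Q(v)=V(G)$: the clean overlap-counting argument only rules out the case where the two ``component layers'' fail to cover $V(G)$, and the residual case — where they always cover everything — is the delicate one. It is precisely there that one must use that \emph{both} color classes are genuine edge-cuts (so that $P$ and $Q$ are both nontrivial partitions), which is what produces the final contradiction about overlapping blocks.
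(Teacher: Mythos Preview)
Your argument is correct and takes a genuinely different route from the paper's. The paper proves $md(G)=1$ by showing that $G$ is a \emph{closure}: from $\delta(G)\ge\lfloor n/2\rfloor+1$ one gets $d(b)+d(c)\ge n+1$ for any two adjacent edges $ab,ac$, so either $bc\in E(G)$ or $b,c$ have at least three common neighbours, placing $ab$ and $ac$ together in a $K_3$ or a $K_{2,3}$; chaining along a path shows $e\theta f$ for all edges $e,f$, and Lemma~\ref{closure} then gives $md(G)=1$. Your approach instead analyses the two component partitions $P,Q$ induced by a hypothetical $2$-coloring (via Lemma~\ref{leqk}) and reaches a contradiction by pure counting, using only that each block of $P$ meets each block of $Q$ in at most one vertex. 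The paper's method is shorter and explains structurally why the degree threshold forces $md=1$, and the closure machinery is reusable elsewhere in the paper; your method is more self-contained, avoids Lemma~\ref{closure} entirely, and exhibits a direct partition-theoretic obstruction to any nontrivial $MD$-coloring under the degree hypothesis. For the sharpness you give exactly the same construction as the paper (two cliques joined by a matching), appealing to Lemma~\ref{matching-cut}.
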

\begin{proof}
To prove $md(G)=1$, it is sufficient to prove $G$ is a closure.

In fact, any two adjacent edges of $G$ are either in a triangle or in a $K_{2,3}$, because
for any two adjacent edges $e_1=ab$ and $e_2=ac$, $d_G(b)+d_G(c)\geq 2\left\lfloor\frac{n}{2}\right\rfloor+2\geq n+1$,
and so either $bc$ is an edge of $G$ or $b$ and $c$ have at least three common vertices.

For two edges $e_1$ and $e_2$ of $G$, there is a path $P$ of $G$ with pendent edges $e_1$ and $e_2$. Since any two adjacent edges
of $P$ are in a $K_3$ or a $K_{2,3}$, $G$ is a closure. Therefore $md(G)=1$.

Now we show that the bound is sharp, i.e., we need to construct a graph $H$ with $\delta(H)=\left\lfloor\frac{n}{2}\right\rfloor$ and $md(H)\geq2$.
Let $A,B$ be two vertex-disjoint complete graphs with $V(A)=\{v_1,\cdots,v_{\left\lceil\frac{n}{2}\right\rceil}\}$ and
$V(B)=\{u_1,\cdots,u_{\left\lfloor\frac{n}{2}\right\rfloor}\}$. Let $H$ be a graph obtained from $A$ and $B$ by adding additional edges $e_i=u_iv_i$
for $i\in[\left\lfloor\frac{n}{2}\right\rfloor]$. Then $\delta(G)=\left\lfloor\frac{n}{2}\right\rfloor$. Because
$M=\{e_1,\cdots,e_{\left\lfloor\frac{n}{2}\right\rfloor}\}$ is a matching cut of $G$, by Lemma \ref{matching-cut}, $md(G)\geq2$.
\end{proof}

\section{Erd\H{o}s-Gallai-type problems}

Since for a connected graph $G$, we have $1\leq md(G)\leq n-1$, the Erd\H{o}s-Gallai-type problems for the monochromatic disconnection number
are stated as follows.

{\bf Problem A:} Given two positive integers $n$ and $r$ such that $1\leq r\leq n-1$, compute the minimum integer $f(n,r)$ such that
for any connected graph $G$ of order $n$, if $e(G)\geq f(n,r)$, then $md(G)\leq r$.

{\bf Problem B:} Given two positive integers $n$ and $r$ such that $1\leq r\leq n-1$, compute the maximum integer $g(n,r)$ such that
for any connected graph $G$ of order $n$, if $e(G)\leq f(n,r)$, then $md(G)\geq r$.

next we will consider the two problems separately in subsections.

\subsection{ Solution for Problem A}

In order to solve Problem A, we need the following lemmas.

\begin{lemma}\label{comb}
Let $G$ be a connected graph with $n$ vertices and $r$ blocks. Then $e(G)\leq {n-r+1\choose 2}+r-1$.
\end{lemma}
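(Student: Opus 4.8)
The plan is to reduce the bound to an optimization over the orders of the blocks of $G$. Write $n_i=v(B_i)$ for $i\in[r]$. Since every edge of $G$ lies in exactly one block and a block on $n_i$ vertices has at most $\binom{n_i}{2}$ edges, we get $e(G)=\sum_{i\in[r]}e(B_i)\le\sum_{i\in[r]}\binom{n_i}{2}$. The structural input is the identity $\sum_{i\in[r]}(n_i-1)=n-1$, valid for every connected graph; I would prove it by induction on $r$ using the block--cut-vertex tree: delete from $G$ all vertices of a leaf block except the cut vertex attaching it, obtaining a connected graph with $r-1$ blocks and $n-(n_i-1)$ vertices, so both sides decrease by $n_i-1$. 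In particular, since each $n_i\ge2$, this forces $n-1\ge r$, hence $n-r+1\ge2$.

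It then suffices to show $\sum_{i\in[r]}\binom{n_i}{2}\le\binom{n-r+1}{2}+r-1$ subject to $n_i\ge2$ and $\sum_{i\in[r]}(n_i-1)=n-1$. Put $m_i=n_i-1\ge1$; then $\binom{n_i}{2}=\tfrac12 m_i(m_i+1)$, so $\sum_{i\in[r]}\binom{n_i}{2}=\tfrac12\sum_{i\in[r]}m_i^2+\tfrac{n-1}{2}$, and it remains to maximize $\sum m_i^2$ over positive integers with $\sum m_i=n-1$. Among all such tuples pick one with $\sum m_i^2$ maximum. If two of its entries $m_i\le m_k$ were both $\ge2$, then replacing $(m_i,m_k)$ by $(m_i-1,m_k+1)$ would keep all entries positive, preserve the sum, and change $\sum m_i^2$ by $2(m_k-m_i+1)\ge2>0$, contradicting maximality. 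Hence at most one entry exceeds $1$, forcing that entry to be $n-r$ and the remaining $r-1$ entries to be $1$, so $\sum m_i^2\le(n-r)^2+(r-1)$. Substituting back gives $\sum_{i\in[r]}\binom{n_i}{2}\le\tfrac12\big((n-r)^2+(r-1)+(n-1)\big)=\binom{n-r+1}{2}+r-1$, as desired; the extremal configuration has block orders $(n-r+1,2,\dots,2)$, realized for instance by $K_{n-r+1}$ together with $r-1$ pendant edges.

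There is no serious obstacle in this argument: it is the combination of the block-order identity with a one-line smoothing/convexity estimate. The only points that need care are establishing $\sum_{i\in[r]}(n_i-1)=n-1$ cleanly (handling trivial blocks $K_2$ correctly, and the degenerate case $n\le 1$ separately if one wishes) and checking that the smoothing move respects the constraint $m_i\ge1$, which is exactly why the $K_2$-blocks in the extremal graph cannot be absorbed into the large clique.
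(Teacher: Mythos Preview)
Your proof is correct and follows essentially the same smoothing strategy as the paper: both argue that the edge-maximal configuration has one block of order $n-r+1$ and $r-1$ trivial blocks, established by shifting a vertex from a smaller non-trivial block to a larger one. The paper performs this shift directly on the graph (replacing $K_{|B_1|}$ and $K_{|B_2|}$ by $K_{|B_1|+1}$ and $K_{|B_2|-1}$), while you first extract the constraint $\sum_{i}(n_i-1)=n-1$ and run the smoothing on integer tuples; this is a touch more explicit but not a genuinely different route.
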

\begin{proof}
Let $H$ be a connected graph with $n$ vertices and $r$ blocks such that $e(H)$ is maximum.
We only need to prove $e(H)={n-r+1\choose 2}+r-1$. It is obvious that each block of $H$ is a complete graph.
In fact, the graph $H$ has $r-1$ trivial blocks $K_2$ and one block $K_{n-r+1}$, and then $e(H)={n-r+1\choose 2}+r-1$.
Otherwise, suppose $H$ has at least two non-trivial blocks $B_1$ and $B_2$ and $|B_1|\geq |B_2|$. Let $H'$ be a graph
obtained from $H$ by replacing $B_1$ by $K_{|B_1|+1}$ and replacing $B_2$ by $K_{|B_2|-1}$. Then $H'$ is a graph with
$n$ vertices, $r$ blocks and more edges, which contradicts that $e(H)$ is maximum.
\end{proof}

\begin{lemma}\label{md1}
Suppose $G$ is a graph with $n\geq 4$ and $e(G)\geq {n-1\choose 2}+2$. Then $md(G)=1$, and the lower bound for $e(G)$ is sharp.
\end{lemma}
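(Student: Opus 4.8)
The plan is to establish the upper bound $md(G)=1$ by contradiction, and then to produce a sharpness example. Suppose $md(G)\ge 2$. By Lemma~\ref{leqk} there is an $MD$-coloring $\Gamma$ of $G$ using exactly two colors; write $E_1,E_2$ for the two (non-empty) color classes. The first step is the reformulation: for two vertices $u,v$, a monochromatic $uv$-cut of color $i$ exists if and only if $u$ and $v$ lie in different components of $G-E_i$, i.e. of $\bigl(V(G),E_{3-i}\bigr)$ (one direction takes $\partial_G(S)$ for $S$ the component of $u$ in $G-E_i$, which is monochromatic of color $i$; the other is immediate). Hence $\Gamma$ being an $MD$-coloring is equivalent to saying that no two distinct vertices lie simultaneously in a common component of $\bigl(V(G),E_1\bigr)$ and in a common component of $\bigl(V(G),E_2\bigr)$.

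Next I would fix a component $A$ of $\bigl(V(G),E_1\bigr)$ with $|A|\ge 2$; such an $A$ exists because $E_1\neq\emptyset$. The key structural observations are: (a) every edge of $G$ with exactly one endpoint in $A$ is colored $2$, since otherwise that edge would enlarge the $E_1$-component $A$; (b) the vertices of $A$ lie in pairwise distinct components of $\bigl(V(G),E_2\bigr)$, because two of them in a common $E_2$-component would also share the $E_1$-component $A$, contradicting the reformulation above; and therefore (c) each vertex $y\notin A$ has at most one neighbor in $A$, as all edges from $y$ to $A$ are colored $2$ by (a), and two such edges would place two vertices of $A$ in $y$'s $E_2$-component, contradicting (b). Also $V(G)\setminus A\neq\emptyset$, since otherwise (a) forces $E_2=\emptyset$.

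Writing $a=|A|$ and $b=|V(G)\setminus A|=n-a$, we have $2\le a\le n-1$ and $b\ge 1$, and by (c) the number of edges between $A$ and $V(G)\setminus A$ is at most $b$. Hence
\[
e(G)\ \le\ \binom{a}{2}+b+\binom{b}{2}\ =\ \binom{n}{2}-b(a-1).
\]
Since $b(a-1)=(n-a)(a-1)$ is a concave function of $a$ on $[2,n-1]$ taking the value $n-2$ at both endpoints, we get $b(a-1)\ge n-2$, so $e(G)\le\binom{n}{2}-(n-2)=\binom{n-1}{2}+1$, contradicting the hypothesis $e(G)\ge\binom{n-1}{2}+2$. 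This proves $md(G)=1$. For sharpness, let $G$ consist of a copy of $K_{n-1}$ together with one pendant vertex joined to a vertex of $K_{n-1}$; then $e(G)=\binom{n-1}{2}+1$ and, by Proposition~\ref{block} together with Lemma~\ref{kst}, $md(G)=md(K_{n-1})+md(K_2)=1+1=2$.

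The main obstacle is really the only non-bookkeeping step, namely observation (c): that a vertex outside $A$ can be adjacent to at most one vertex of $A$. This is what pins the cross-edge count down to $|V(G)\setminus A|$; without it the naive estimate $\binom{a}{2}+\binom{b}{2}+ab=\binom{n}{2}$ is useless. It is worth noting that this argument needs no connectivity hypothesis on $G$ beyond the standing assumption that $G$ is connected, so Lemma~\ref{comb} is not required for this proof.
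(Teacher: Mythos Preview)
Your proof is correct and takes a genuinely different route from the paper's. The paper argues by induction on $n$: it finds a vertex $v$ of degree at most $n-2$, checks that $G-v$ still satisfies the edge hypothesis, applies the inductive hypothesis to get $md(G-v)=1$, and then invokes Lemma~\ref{secq} (after using Lemma~\ref{comb} to rule out $v$ being a cut-vertex) to conclude $md(G)\le md(G-v)=1$. Your argument is instead a direct edge-count: assuming a $2$-coloring exists, you isolate an $E_1$-component $A$ with $|A|\ge 2$ and show via~(c) that each outside vertex sends at most one edge into $A$, which yields $e(G)\le\binom{a}{2}+\binom{b}{2}+b\le\binom{n-1}{2}+1$. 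This is more elementary and self-contained---it uses only the trivial Lemma~\ref{leqk} and avoids the induction, the vertex-deletion lemma, and the block-counting lemma altogether. The paper's approach, by contrast, recycles tools that are needed elsewhere in the paper, so it is locally economical in that sense. Both proofs use the same sharpness example. One small wording fix: where you write that ``(a) forces $E_2=\emptyset$'' when $A=V(G)$, the operative observation is actually~(b), since~(a) is vacuous in that case; (b) says the vertices of $A$ lie in pairwise distinct $E_2$-components, which for $A=V(G)$ forces $E_2=\emptyset$.
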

\begin{proof}
The proof proceeds by induction on $n$. If $n=4$, then $G$ is either a $K_4$ or a $K_4^-$, and so $md(G)=1$. Let $G$ be a graph
with $n>4$. If $G$ is $K_n$, then $md(G)=1$. Otherwise there exists a vertex $v$ of $V(G)$ such that $d_G(v)\leq n-2$. Then $G'=G-v$ satisfies
$$e(G')=e(G)-d_G(v)\geq {n-1\choose 2}+2-(n-2)={n-2\choose 2}+2.$$
By induction, $md(G')=1$.

Because $e(G)\geq {n-1\choose 2}+2=e(K_n)-(n-3)$, $d_G(v)\geq2$, i.e., $v$ is not a pendent vertex. In fact, $v$ is not a cut-vertex,
for otherwise $G$ has at least $2$ blocks, and then $e(G)\leq {n-1\choose 2}+1$ by Lemma \ref{comb}, a contradiction.
Therefore $v$ is neither a pendent vertex nor a cut-vertex, and by Lemma \ref{secq}, $md(G')\geq md(G)$. So $md(G)=1$.

Let $H$ be a graph obtained by adding a pendent edge to a $K_{n-1}$. Then $e(H)={n-1\choose 2}+1$ and $md(H)=2$. This implies
that the bound is sharp.
\end{proof}

\begin{theorem}
Given two positive integers $n$ and $r$ with $1\leq r\leq n-1$,
$$ f(n,r)=\left\{
\begin{array}{lcl}
{n-r+1\choose 2}-n+2r+1&  & 1\leq r\leq n-2; \\
n-1&  &r=n-1.
\end{array}
\right. $$
\end{theorem}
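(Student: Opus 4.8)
The plan is to establish the two cases separately, the case $r=n-1$ being immediate from Proposition~\ref{block}(1): if $e(G)\geq n-1$ and $G$ is connected, then $G$ contains a cycle unless $G$ is a tree, and a tree has exactly $n-1$ edges; so $e(G)=n-1$ forces $G$ to be a tree, whence $md(G)=n-1\leq r$, while any graph with more edges is not a tree and has $md(G)\leq n-2$. Hence $f(n,n-1)=n-1$. For the main case $1\leq r\leq n-2$, write $\varphi(n,r)={n-r+1\choose 2}-n+2r+1$. I would prove the two inequalities $f(n,r)\leq \varphi(n,r)$ and $f(n,r)>\varphi(n,r)-1$ (i.e.\ $f(n,r)\geq\varphi(n,r)$) by, respectively, an upper-bound argument on $md$ for dense graphs and a construction of a dense graph with $md$ large.

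For the upper bound $f(n,r)\leq\varphi(n,r)$, I would show: if $G$ is connected on $n$ vertices with $e(G)\geq\varphi(n,r)$, then $md(G)\leq r$. The natural route is via the block decomposition (Proposition~\ref{block}): $md(G)=\sum_i md(B_i)$, and each block $B_i$ on $n_i$ vertices satisfies $md(B_i)\leq n_i-1$ with equality only for $K_2$, and more sharply $md(B_i)\leq\lfloor n_i/2\rfloor$ when $B_i$ is $2$-connected (Theorem~\ref{n-2}), and $md(B_i)=1$ once $B_i$ is sufficiently dense (Lemma~\ref{md1} gives this for a single block when $e(B_i)\geq{n_i-1\choose 2}+2$). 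So the worst case for $md(G)$ given a fixed edge budget is to make most blocks trivial $K_2$'s and concentrate the remaining edges in one block; but by Lemma~\ref{comb} a graph with $n$ vertices and $b$ blocks has at most ${n-b+1\choose 2}+b-1$ edges, so having many blocks caps the edge count. One should set up the optimization: suppose $G$ has $b$ blocks with $md$-values summing to $md(G)$; use Lemma~\ref{md1} (blockwise) to argue any block with ${n_i-1\choose 2}+2$ or more edges contributes $1$, balance the trade-off between number of blocks and size of the dense block, and check that exceeding $\varphi(n,r)-1$ edges forces $\sum md(B_i)\leq r$. The extremal configuration to keep in mind: a $K_{n-r+1}$ with $r-1$ pendent edges attached, which has ${n-r+1\choose 2}+r-1$ edges and $md=1+(r-1)=r$; the quantity $\varphi(n,r)$ should be exactly one more than the maximum number of edges of such an ``$md\geq r+1$'' configuration, or rather one more than the max edges over all graphs with $md\geq r+1$.

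For the lower bound $f(n,r)\geq\varphi(n,r)$, I would exhibit a connected graph $G_0$ on $n$ vertices with $e(G_0)=\varphi(n,r)-1$ and $md(G_0)\geq r+1$, which shows $f(n,r)$ cannot be as small as $\varphi(n,r)-1$. The candidate is a $K_{n-r}$ (which is a closure, so has $md=1$ by Lemma~\ref{closure}/Lemma~\ref{kst}) together with $r$ pendent edges hung on it, giving $md=1+r$ by Proposition~\ref{block}, and edge count ${n-r\choose 2}+r$; one checks ${n-r\choose 2}+r={n-r+1\choose 2}-n+2r$, which is exactly $\varphi(n,r)-1$. If instead one wants $md$ even larger one can replace $K_{n-r}$ by $K_{n-r}^-$ (still $md=1$ by Lemma~\ref{kst}), but for sharpness we only need one witness. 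I expect the main obstacle to be the upper-bound direction: precisely pinning down, over all block decompositions, that $e(G)\geq\varphi(n,r)$ forces $md(G)\leq r$ — this requires a careful case analysis combining Lemma~\ref{comb} (edge cap from many blocks), Lemma~\ref{md1} (density in a single block kills $md$), and Proposition~\ref{block} (additivity), and making sure the arithmetic identity $\varphi(n,r)={n-r+1\choose 2}+r-1 - (n-2r)$ lines up with the extremal graph. The rest is bookkeeping with binomial coefficients.
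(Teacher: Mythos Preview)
Your construction for the lower bound $f(n,r)\geq\varphi(n,r)$ is correct and is exactly the paper's witness $G_2=K_{n-r}$ with $r$ pendent edges. The case $r=n-1$ is also fine.

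The genuine gap is in your upper-bound strategy. Your proposed toolkit --- Proposition~\ref{block}, Lemma~\ref{comb}, Lemma~\ref{md1}, Theorem~\ref{n-2} --- does not close the argument when $G$ is $2$-connected and $2\leq r\leq n-2$. In that situation there is a single block, so additivity and the edge cap from Lemma~\ref{comb} are vacuous; Lemma~\ref{md1} only handles the extreme $r=1$; and Theorem~\ref{n-2} only gives $md(G)\leq\lfloor n/2\rfloor$, which says nothing for $r<\lfloor n/2\rfloor$. Your ``optimization over block decompositions'' cannot recurse here because there is nothing to decompose. This is precisely the case the paper isolates (its Case~2), and it is resolved by a tool you never invoke: Lemma~\ref{secq} (deleting a non-pendent, non-cut vertex does not increase $md$). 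The paper shows via an average-degree computation that a $2$-connected $G$ with $e(G)=\varphi(n,r)$ has a vertex $v$ of degree at most $n-r-1$, so that $G-v$ still has at least $\varphi(n-1,r)$ edges; then Lemma~\ref{secq} plus induction on $n$ finishes.

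Even your multi-block case is only a sketch: ``set up the optimization'' implicitly requires knowing $f(n_i,k_i-1)$ for each block $B_i$, which is the statement being proved, so you would need to frame the whole argument as an induction on $n$ --- something you do not say. The paper organizes this differently (minimal counterexample with the maximum number of blocks, then a block-replacement trick in its Case~1), but either route ultimately feeds into the $2$-connected case, and that case cannot be done with the lemmas you list.
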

\begin{proof}
Although the notation $f(n,r)$ has a special meaning in Problem $A$, for convenience, we just see it as function on the variables $n$ and $r$ in this proof.

If $n\leq 4$, it is easy to verify that the theorem holds. By Proposition \ref{block},
$f(n,n-1)=n-1$ is obvious. By Lemma \ref{md1}, the theorem holds when $r=1$. Therefore, we only need to show that $f(n,r)={n-r+1\choose 2}-n+2r+1$ when $n\geq5$ and $2\leq r\leq n-2$.

Let $G_1$ be a graph with $r-1$ trivial blocks and one non-trivial block $B$, where $|B|=n-r+1$ and $e(B)={n-r+1\choose 2}-n+r+2$. Then $e(B)={|B|-1\choose2}+2$,
and by Lemma \ref{md1}, $md(B)=1$. Therefore $md(G_1)=r$ by Proposition \ref{block}. Let $G_2$ be a graph with $r$ trivial blocks and one non-trivial block $K_{n-r}$.
Then $md(G_2)=r+1$. Because $e(G_1)=f(n,r)$ and $e(G_2)=f(n,r)-1$, we only need to show that $md(G)\leq r$ when $e(G)\geq f(n,r)$. In fact, since every graph with more
than $f(n,r)$ edges has a spanning subgraph with exactly $f(n,r)$ edges, by Lemma \ref{sub}, we only need to show that $md(G)\leq r$ when $e(G)= f(n,r)$.

Obviously, the result is true for $n\leq 4$. Suppose the result does not hold for all $n$. Let $n$ be the minimum integer such that there is a positive integer $r$
with $2\leq r\leq n-2$, the result is false for some connected graphs $G$ with $|G|=n$ and $e(G)= f(n,r)$.
We choose such a graph $G$ with $md(G)\geq r+1$ such that the number of blocks of $G$ is maximum.
Suppose $G$ has $t$ blocks $B_1,\cdots,B_t$. By Lemma \ref{comb}, $t\leq r$. Because $md(G)\geq r+1$, by Proposition \ref{block}, there is a block, say $B_1$, with $md(B_1)=k\geq2$.
Let $|B_1|=n_1$. We distinguish the following cases.

{\em Case 1.} $t\geq 2$.

Because $|B_1|=n_1<n$, $e(B_1)\leq f(n_1,k-1)-1={n_1-k+2\choose 2}-n_1+2(k-1)$. Let $T^k$ be a graph with $k-1$ trivial blocks and one block $K_{n_1-k+1}$, then $md(T^k)=k$
and $e(T^k)={n_1-k+1\choose 2}+k-1= f(n_1,k-1)-1\geq e(B_1)$. Let $G'$ be a graph obtained from $G$ by replacing $B_1$ by $T^k$ and let $G''$ be a connected spanning subgraph
of $G'$ with $f(n,r)$ edges. Then $G''$ is a graph with $|G''|=n$, $e(G'')= f(n,r)$ and $md(G'')\geq r+1$. However, the number of blocks of $G''$ is more than $t$, a contradiction.

{\em Case 2.} $t=1$.

Since $G$ has just one block, $G$ is $2$-connected. The average degree of $G$ is
$$\frac{2e(G)}{n}=\frac{ 2[{n-r+1\choose 2}-n+2r+1]}{n}=\frac{n^2-2nr+r^2-n+3r+2}{n}.$$
Since $G$ is $2$ connected, $md(G)=r\leq\left\lfloor\frac{n}{2}\right\rfloor$ by Theorem \ref{n-2}. Because $n\geq 5$ and $r\geq 2$, the difference between the average degree of $G$ and $n-r-1$ is
$$dif=\frac{2e(G)}{n}-(n-r-1)=\frac{r^2+3r+2}{n}-r.$$
Since $2\leq r\leq \left\lfloor\frac{n}{2}\right\rfloor$, if $n\geq8$, then $dif\leq0$; if $n=7$, then $dif<0$; if $n=6$, then $dif<1$; if $n=5$, then $dif<1$.
This implies that $G$ has a vertex $v$ with $d_G(v)\leq n-r-1$. Let $G'=G-v$. Then $G'$ is connected and $e(G')\geq e(G)-(n-r-1)=f(n-1,r)$.
Since $G$ is a minimum counterexample of the theorem and $|G'|=|G|-1$, $md(G')\leq r$. By Lemma \ref{secq}, $md(G)\leq md(G')\leq r$, which contradicts that $md(G)\geq r+1$.

According to above two cases, such a graph $G$ is not exists, and therefore the theorem holds.
\end{proof}

\subsection{Solution for Problem B}

To {\em contract} an edge $e$ of a graph $G$ is to delete the edge and then identify its ends,
and to contract an edge subset $X$ of a graph $G$ is to contract the edges of $X$ one by one.
The resulting graphs are denoted by $G/e$ and $G/X$, respectively.
To {\em subdivide} an edge of a graph is to insert a new vertex into the edge.
Let $v$ be a $2$-degree vertex of a graph $G$, and let $e_1=vv_1$ and $e_2=vv_2$ be two edges of
$G$ incident with $v$. The operation of {\em splitting off} the edges $e_1$ and $e_2$ from $v$
consists of deleting the vertex $v$ and its incident edges $e_1,e_2$ and then adding a new edge
joining $v_1$ and $v_2$.

\begin{claim} \label{long}
For a connected graph $G'$, let $c$ be a $2$-degree vertex of $G'$ and $e_1=ac$ and $e_2=bc$ be the two edges
incident with $c$. Let $G$ be a graph obtained from $G'$ by splitting off the $e_1$ and $e_2$ by a new edge $e$.
If $\Gamma'$ and $\Gamma$ are edge-colorings of $G'$ and $G$, respectively,
such that $\Gamma'(f)=\Gamma(f)$ when $f \in E(G'-v)$ and $\Gamma'(e_1)=\Gamma'(e_2)=\Gamma(e)$,
then $\Gamma'$ is an $MD$-coloring of $G'$ if and only if $\Gamma$ is an $MD$-coloring of $G$.
Furthermore, $md(G)\leq md(G')$.
\end{claim}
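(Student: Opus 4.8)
The plan is to set up an explicit correspondence between the color classes of $G'$ and of $G$, and between the graphs obtained by deleting those classes, and then read off both the equivalence and the inequality from it. Put $j_0=\Gamma'(e_1)=\Gamma'(e_2)=\Gamma(e)$, identify $V(G)$ with $V(G')-\{c\}$ and $E(G)-\{e\}$ with $E(G')-\{e_1,e_2\}$ (the edges of $G'$ not incident with $c$), so that $\Gamma$ and $\Gamma'$ agree on this common edge set. For a color $j$ let $E'_j$ and $E_j$ be the $j$-induced edge sets of $G'$ and of $G$. Because $\Gamma'(e_1)=\Gamma'(e_2)$, for every $j\ne j_0$ we have $E'_j=E_j$ on the common edge set, while $E_{j_0}=(E'_{j_0}-\{e_1,e_2\})\cup\{e\}$ and hence $E(G)-E_{j_0}=E(G')-E'_{j_0}$ on the common edge set. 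The hypothesis $\Gamma'(e_1)=\Gamma'(e_2)$ is exactly what prevents a single color class from meeting $c$ in only one of $e_1,e_2$, which is what keeps this bookkeeping clean.

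Next I would compare $G'-E'_j$ with $G-E_j$ for each color $j$. If $j\ne j_0$, then $e_1,e_2\notin E'_j$ and $e\notin E_j$, so $G-E_j$ is obtained from $G'-E'_j$ by splitting off $e_1,e_2$ at $c$; since replacing the path $a\,c\,b$ by the single edge $ab=e$ alters neither connectivity nor the component partition among the vertices different from $c$, any $u,w\in V(G)$ lie in the same component of $G'-E'_j$ if and only if they lie in the same component of $G-E_j$. If $j=j_0$, then $G'-E'_{j_0}$ is the disjoint union of the isolated vertex $c$ with a graph on vertex set $V(G')-\{c\}$ and edge set $E(G')-E'_{j_0}$, which is precisely $G-E_{j_0}$; so again $u,w\in V(G)$ are separated in $G'-E'_{j_0}$ iff they are separated in $G-E_{j_0}$, and moreover $c$ is separated from every other vertex in $G'-E'_{j_0}$.

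The equivalence is then immediate. If $\Gamma$ is an $MD$-coloring of $G$, then any two vertices of $G$ are separated by some $E_j$, hence by the corresponding $E'_j$ in $G'$, and any pair involving $c$ is separated by $E'_{j_0}$; so $\Gamma'$ is an $MD$-coloring of $G'$. Conversely, if $\Gamma'$ is an $MD$-coloring of $G'$, then any two vertices of $G$ (necessarily both $\ne c$) are separated by some $E'_j$, hence by $E_j$ in $G$; so $\Gamma$ is an $MD$-coloring of $G$. Here $G$ is indeed connected: if $c$ is a cut-vertex of $G'$ then $a$ and $b$ lie in distinct components of $G'-c$ and the new edge joins them, while otherwise $G'-c$ is already connected (one may assume $a\ne b$, else $e$ is a loop that affects nothing).

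Finally, to get $md(G)\le md(G')$ I would take an extremal $MD$-coloring $\Gamma$ of $G$, so $|\Gamma|=md(G)$, and define $\Gamma'$ on $G'$ by keeping $\Gamma$ on every edge not incident with $c$ and setting $\Gamma'(e_1)=\Gamma'(e_2)=\Gamma(e)$. This pair $(\Gamma',\Gamma)$ satisfies the hypothesis of the claim and uses the same set of colors, so $|\Gamma'|=|\Gamma|=md(G)$; by the equivalence just proved $\Gamma'$ is an $MD$-coloring of $G'$, and therefore $md(G')\ge md(G)$. I expect the only genuine obstacle to be organizing the case split according to whether the deleted color class is $j_0$, together with the remark that the $\Gamma'(e_1)=\Gamma'(e_2)$ assumption is precisely what makes both cases behave; everything else is routine verification of component partitions.
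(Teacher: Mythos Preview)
Your proof is correct and follows essentially the same route as the paper: you set up the color classes $E_j,E'_j$, split into the two cases $j\ne j_0$ and $j=j_0$, compare $G-E_j$ with $G'-E'_j$ in each case to see that separations among vertices of $V(G)$ coincide (and that $c$ is isolated when $j=j_0$), and then deduce the inequality by transporting an extremal $MD$-coloring of $G$ to $G'$. The only additions over the paper are your explicit remark that $G$ is connected and the parenthetical about $a=b$, both of which are harmless.
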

\begin{proof}
Since $G'$ is a connected graph, $G$ is also connected.
Let $E'_i$ and $E_i$ be the $i$-induced edge sets of $G'$ and $G$, respectively. Then $E_i=E'_i$ when $i\neq \Gamma(e)$
and $E_i=E'_i\cup e-(e_1\cup e_2)$ when $i=\Gamma(e)$. Furthermore, $V(G)=V(G')-c$ and $|\Gamma'(G')|=|\Gamma(G)|$.
The relationships between $G-E_i$ and $G'-E'_i$ are shown as follows.
\begin{enumerate}
\item If $i\neq \Gamma(e)$, then $E(G)-E_i$ is a graph obtained from $G'-E'_i$ by spitting off $e_1$ and $e_2$ from $c$;
\item if $i=\Gamma(e)$, then $G-E_i=(G'-E'_i)-c$.
\end{enumerate}

We prove the first result below, that is, $\Gamma'$ is an $MD$-coloring of $G'$ if and only if $\Gamma$ is an $MD$-coloring of $G$.
Suppose $\Gamma'$ is an $MD$-coloring of $G'$. Let $u,v$ be two vertices of $V(G)$. Since $u,v$ are also vertices of $V(G')$,
there is an $E'_i$ such that $u,v$ are in different components of $G'-E'_i$. According to the relationship between $G-E_i$ and $G'-E'_i$,
$u,v$ are also in different components of $G-E_i$. Therefore, $\Gamma$ is an $MD$-coloring of $G$.
Analogously, suppose $\Gamma$ is an $MD$-coloring of $G$. Let $u,v$ be two vertices of $V(G')$. If $u$ and $v$ are in $V(G')-c=V(G)$,
then there is an $E_i$ such that $u,v$ are in different components of $G-E_i$. According to the relationship between $G-E_i$ and $G'-E'_i$,
$u,v$ are also in different components of $G'-E'_i$; if one of the $u,v$ is $c$, since $c$ is an isolate vertex of $G'-E'_{\Gamma(e)}$,
$u,v$ are in different components of $G'-E'_{\Gamma(e)}$. Therefore, $\Gamma'$ is an $MD$-coloring of $G'$.

The second result can be derived from the first result directly.
Suppose the edge-coloring $\Gamma$ is an extremal $MD$-coloring of $G$. Then $\Gamma'$ is an $MD$-coloring of $G'$.
Since $|\Gamma|=|\Gamma'|$, we have $md(G)\leq md(G')$.
\end{proof}

\begin{lemma}\label{2-mat-cut}
Let $M$ be a minimal matching cut of $G$, and $G'$ be the underling graph of $G/M$. Then $md(G')\leq md(G)-1$.
\end{lemma}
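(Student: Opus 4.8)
The plan is to build an $MD$-coloring of $G$ that uses $md(G')+1$ colors, starting from an extremal $MD$-coloring of $G'=G/M$ and re-introducing the matching cut $M$ as a single new color class. Let $\Gamma'$ be an extremal $MD$-coloring of $G'$, so $|\Gamma'|=md(G')$; define $\Gamma$ on $G$ by coloring every edge of $M$ with a brand-new color $c_0$ and coloring each remaining edge $e\in E(G)\setminus M$ with $\Gamma'$ of its image in $G/M$ (the contraction identifies the two sides of $M$, so edges outside $M$ correspond bijectively to edges of $G'$, modulo the passage to the underlying simple graph, which by Proposition \ref{parallel} does not change $md$). Then $|\Gamma|=md(G')+1$, and it suffices to show $\Gamma$ is an $MD$-coloring of $G$; this gives $md(G)\geq md(G')+1$, i.e.\ the claimed inequality.

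First I would handle the pair of vertices straddling the cut. Since $M$ is a minimal matching cut, $G-M$ has exactly two components, say $A$ and $B$ (minimality rules out more than two components, as any proper nonempty sub-cut that still disconnects would contradict minimality — this is a standard fact about minimal edge-cuts that I should state carefully), and the edges of $M$ form a monochromatic (color $c_0$) edge-cut separating every vertex of $A$ from every vertex of $B$. So any $u\in A$, $v\in B$ are separated by the color-$c_0$ class. Next, for two vertices $u,v$ lying in the same side, say both in $A$: in $G/M$ the whole of $B$ together with the incident $M$-edges has been contracted to a single vertex $b^*$, and $u,v$ are still distinct vertices of $G'$. Since $\Gamma'$ is an $MD$-coloring of $G'$, there is a color $j\in[md(G')]$ and a $j$-induced edge set $E'_j$ of $G'$ separating $u$ and $v$ in $G'-E'_j$. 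I would pull this partition back to $G$: the $j$-induced edge set $E_j$ of $G$ consists of the preimages of the edges of $E'_j$, none of which lie in $M$ (those got color $c_0$). The key point is that $G-E_j$ maps onto $G'-E'_j$ under the contraction, and contraction cannot separate vertices that were together; hence the component of $G'-E'_j$ not containing $u$ pulls back to a union of components of $G-E_j$ none of which contains $u$, while $v$ lies in that part — so $u$ and $v$ are separated by $E_j$ in $G-E_j$ as well. A symmetric argument covers the case $u,v\in B$.

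The main obstacle I anticipate is bookkeeping around the contraction rather than any deep idea: (i) making precise that $G/M$ (or its underlying simple graph $G'$) really does have its edge set in bijection, color-class by color-class, with $E(G)\setminus M$, so that the pullback of a color class is well-defined and the assertion ``$G-E_j$ maps onto $G'-E'_j$'' is literally correct — here I would invoke Proposition \ref{parallel} to dispose of any parallel edges created by the contraction; and (ii) the elementary but essential claim that a minimal matching cut leaves exactly two components, which I would prove by noting that if $G-M$ had components $C_1,\dots,C_k$ with $k\geq 3$ then the set of $M$-edges with one end in $C_1$ would be a proper nonempty subset of $M$ that is still an edge-cut and still a matching, contradicting minimality. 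Once these two points are nailed down, the separation argument for same-side pairs is just the observation that edge contraction is a surjective graph homomorphism and therefore never increases the number of connected components nor separates a connected set.
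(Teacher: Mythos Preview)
Your overall strategy is exactly the paper's: take an extremal $MD$-coloring of $G/M$ (equivalently of $G'$, via Proposition~\ref{parallel}), lift it to $G$ by giving every edge of $M$ a single fresh color, and check that the result is an $MD$-coloring of $G$. Your case split (vertices on opposite sides of the cut versus the same side) is a harmless reorganization of the paper's split (images in $G/M$ distinct versus equal), and the pullback argument ``$G-E_j$ maps onto $G'-E'_j$ under contraction'' is precisely what the paper uses.

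There is, however, one genuine misstatement that you must correct. You write that ``in $G/M$ the whole of $B$ together with the incident $M$-edges has been contracted to a single vertex $b^*$''. That is not what contracting the matching $M=\{a_1b_1,\dots,a_tb_t\}$ does: each matched pair $\{a_i,b_i\}$ is identified to one vertex $c_i$, while every unmatched vertex of $A$ and of $B$ survives unchanged. Taken literally, your description would collapse all of $B$ to a point, and then your ``symmetric argument'' for $u,v\in B$ would fail, since both would map to $b^*$ and $\Gamma'$ could not separate them. With the correct picture of $G/M$, the same-side case is fine for the reason you actually need: two vertices on the same side of the cut remain distinct after contracting $M$, because a matching only identifies vertices across the cut. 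Replace that sentence with the correct description and your proof is essentially the paper's, including the two bookkeeping points (i) and (ii) you flag.
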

\begin{proof}
The graph $G/M$ may have parallel edges but does not have loops. By Proposition \ref{parallel}, we only need to prove $md(G/M)\leq md(G)-1$.

Since $M$ is a minimal matching cut, $M$ is a bond of $G$. Then $G-M$ has two components,
say $D_1$ and $D_2$. We denote $M=\{e_1,\cdots,e_t\}$, where $e_i=a_ib_i$ and $a_i$ is in $D_1$ and $b_i$ is in $D_2$ for every $i\in[t]$.
Suppose the graph $G/M$ identifies the ends of $e_i$ into $c_i$. Let $A=\bigcup_{i\in[t]}(a_i\cup b_i)$ and let $f:V(G)\rightarrow V(G/M)$ be
a mapping such that $f(u)=u$ when $u\in V(G)-A$ and $f(u)=c_i$ when $u\in \{a_i,b_i\}$.

Let $\Gamma$ be an extremal $MD$-coloring of $G/M$ with $\Gamma=[md(G/M)]$ and let $E_i$ be the $i$-induced edge set of $G/M$.
Let $\Gamma'$ be an edge-coloring of $G$ such that $\Gamma(e)=\Gamma'(e)$ when $e\notin M$ and $\Gamma'(e)=md(G/M)+1$ when $e\in M$.

For any two vertices $u,v$ of $G$, if $f(u)$ and $f(v)$ are different vertices of $G/M$, then there is an $E_i$ such that $f(u)$ and $f(v)$
are in different components of $G/M-E_i$. Since $G-E_i$ is a graph obtained from $G/M-E_i$ by replacing each $c_i$ by $e_i$,
$u$ and $v$ are also in different components of $G-E_i$. If $f(u)=f(v)$, then $u=a_i$ and $v=b_i$ for some $i\in[t]$,
$u$ and $v$ are in different components of $G-M$. Therefore, $\Gamma'$ is an $MD$-coloring of $G$, and so
$md(G/M)=|\Gamma|=|\Gamma'|-1\leq md(G)-1$.
\end{proof}

The following are some definitions.

$\bullet$ A {\em semi-wheel} $SW(u;v_1v_2\cdots v_n)$ is a graph obtained by connecting $u$ to each vertex of the
path $P=v_1e_1v_2e_2\cdots e_{n-1}v_n$.

$\bullet$ For $n\geq 3$, let $D_n$ be a graph obtained from $SW(u;v_1v_2\cdots v_n)$ by subdividing $uv_2,uv_3,\cdots,uv_{n-1}$.
We call $uv_1$ and $uv_n$ the {\em verges} of $D_n$.

$\bullet$ For $n\geq 4$, let $F_n$ be a graph obtained from $SW(u;v_1v_2\cdots v_n)$ by subdividing $uv_2,uv_3,\cdots,uv_{n-2}$.

$\bullet$ We construct a graph $H_n$ as follows:
$$ H_n=\left\{
\begin{array}{lcl}
K_n&  & n=1,2,3; \\
K^-_4&  & n=4; \\
D_{\frac{n+1}{2}} &   & n \mbox{ is odd and } n\geq5;\\
F_{\frac{n+2}{2}}&  &n \mbox{ is even and } n\geq6.
\end{array}
\right. $$

$\bullet$ Suppose $v_1$ and $v_2$ are pendent vertices of a path $P$ and $u_1,u_2$ are two different vertices of a graph $G$,
and $V(P)\cap V(G)=\emptyset$. We use $I(P,G)$ to denote a graph obtained by identifying $u_i$ of $G$ and $v_i$ of $P$, respectively, for $i\in[2]$.

$\bullet$ Let $n$ and $r$ be two integers with $3\leq r\leq \left\lfloor\frac{n}{2}\right\rfloor$. We construct a graph $H_{n,r}$ below.
If $n$ is even and $r<\frac{n}{2}$, then $H_{n,r}=I(P,H_{n-2r+1})$ where $P$ is a $2r$-path; if $n$ is even and $r=\frac{n}{2}$,
then $H_{n,r}=C_n$; if $n$ is odd, then $H_{n,r}=I(P,H_{n-2r+2})$ where $P$ is a $(2r-1)$-path.

\begin{remark} From the above definitions, we have $e(H_n)=\left\lceil\frac{3}{2}(n-1)\right\rceil$ when $n\geq 3$.
For $n\geq 6$, $e(H_{n,r})=\left\lceil\frac{3}{2}(n-2r)\right\rceil+2r=\frac{3n}{2}-r$ when $n$ is even and $e(H_{n,r})=\left\lceil\frac{3}{2}(n-2r+1)\right\rceil+2r-1=\frac{3n+1}{2}-r$ when $n$ is odd.
For convenience of discussion, if $n\geq 6$ and $3\leq r\leq \left\lfloor\frac{n}{2}\right\rfloor$, then we denote $\mu_{n,r}=\left\lceil\frac{3}{2}(n-2r)\right\rceil+2r$ when $n$ is even
and $\mu_{n,r}=\left\lceil\frac{3}{2}(n-2r+1)\right\rceil+2r-1$ when $n$ is odd, i.e., $e(H_{n,r})=\mu_{n,r}$.
\end{remark}

The following is the proof of $md(H_n)=1$ for $n\geq 2$. The proof uses an obvious conclusion that any
$MD$-coloring of a $4$-cycle or a $5$-cycle is either trivial or assigning colors $1$ and $2$ alternately to its edges.
Therefore, there are two adjacent edges of the $5$-cycle receiving a same color when the $MD$-coloring is non-trivial.
\begin{lemma}
$md(H_n)=1$ for $n\geq 2$.
\end{lemma}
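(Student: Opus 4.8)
For $2\le n\le 4$ the statement is immediate: $H_2=K_2$ and $H_3=K_3$ are a single edge and a triangle, and $md(H_4)=md(K_4^-)=1$ by Lemma \ref{kst}. So assume $n\ge 5$, so that $H_n$ is a subdivided semi-wheel: it is $D_m$ with $m=\tfrac{n+1}{2}$ when $n$ is odd and $F_m$ with $m=\tfrac{n+2}{2}$ when $n$ is even, built on a hub $u$, a spine $v_1v_2\cdots v_m$, and subdivision vertices $w_i$ replacing each subdivided spoke $uv_i$ by a path $uw_iv_i$. The plan is to take an arbitrary $MD$-coloring $\Gamma$ of $H_n$ and show it is trivial, by propagating equalities of colours along the spine. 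I will repeatedly use Proposition \ref{key}: $\Gamma$ restricts to an $MD$-coloring on every connected subgraph, in particular on every $4$-cycle and every $5$-cycle of $H_n$.

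First I would record the two local facts the statement alludes to. In any $MD$-coloring of a $4$-cycle, the $2$-edge-cut formed by either pair of opposite edges must separate its ends, so opposite edges carry the same colour and the colouring is trivial or a proper $2$-alternation. In any $MD$-coloring of a $5$-cycle, each edge $e$ must share its colour with at least one other edge (otherwise the ends of $e$ lie in no monochromatic cut), so no colour class is a singleton; as there are five edges, at most two colours occur, and when two occur a short check (the common refinement of the two induced component-partitions must be the all-singletons partition) shows the minority class consists of two non-adjacent edges, whence the remaining three consecutive edges all carry the majority colour. Applying the $4$-cycle fact to the left-end $4$-cycle $uv_1v_2w_2u$ equates $\Gamma(uv_1)$ with $\Gamma(w_2v_2)$ and equates $\Gamma(v_1v_2)$ with $\Gamma(uw_2)$; at the right end, $D_m$ has the analogous $4$-cycle $uv_mv_{m-1}w_{m-1}u$, while $F_m$ has the triangle $uv_{m-1}v_m$ --- forcing three edges equal outright --- together with the $4$-cycle $uv_{m-1}v_{m-2}w_{m-2}u$. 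Then I would walk inward along the spine: for successive indices $i$ the $5$-cycle $uw_iv_iv_{i+1}w_{i+1}u$, and where a verge is adjacent also auxiliary $5$-cycles such as $uv_1v_2v_3w_3u$ or $uw_{m-2}v_{m-2}v_{m-1}v_mu$, together with the colour-equalities already established and the matching-structure of a non-trivial $C_5$-colouring, force one more adjacent pair among these edges to agree. Iterating, the colours of all spine edges and all spoke segments collapse to a single colour, so $\Gamma$ is trivial and $md(H_n)=1$. It may be cleaner to organise the inward walk as an induction on $m$, peeling $v_m,w_{m-1}$ in the $D_m$ case or $v_{m-1},v_m,w_{m-2}$ in the $F_m$ case by Lemma \ref{secq} and Claim \ref{long}, down to the hand-checkable base cases $D_3,D_4,F_4,F_5$.

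The step I expect to be the obstacle is the $5$-cycle step. The $4$-cycles, and the one triangle in the $F_m$ case, only anchor the colouring at the two ends; a lone $5$-cycle gives only the existential statement ``some adjacent pair of its edges is monochromatic'', which is too weak to extend a chain. The real work is to combine the sharp description of valid $C_5$-colourings --- two colours, minority class a matching, equivalently three consecutive edges equal --- with the colours already forced, so that at each step the specific adjacency produced is the one that pushes the monochromatic block toward the far end, while keeping the two possible shapes of that far end ($D_m$ versus $F_m$, with the parity-determined position of the last subdivided spoke) straight. Since $H_n$ contains no $K_{2,3}$ for $n\ge5$, this cannot be read off from Lemma \ref{closure} and must be done by hand, but the outcome is the same: all edges get one colour.
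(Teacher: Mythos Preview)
Your outline takes a different route from the paper, and misses two shortcuts that make the paper's argument clean.

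For even $n\ge 6$ the paper does not analyse $F_m$ at all: it simply observes that $H_n=H_{n-1}\cup K_3$, the triangle $uv_{m-1}v_m$ glued to $D_{m-1}=H_{n-1}$ along the verge $uv_{m-1}$, and then Lemma~\ref{subb} together with induction gives $md(H_n)=1$ in one line.

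For odd $n\ge 7$ the paper first uses Lemma~\ref{leqk} to assume the $MD$-coloring $\Gamma$ uses exactly two colours, and then makes a single dichotomy on the spine: either two adjacent spine edges $e_i,e_{i+1}$ share a colour --- delete $w_{i+1}$, check both colours survive on $H_{2k-1}-w_{i+1}$, and split off $e_i,e_{i+1}$ from $v_{i+1}$ via Claim~\ref{long} to manufacture a two-colour $MD$-coloring of $H_{2k-3}$, contradicting induction --- or the spine is properly $2$-coloured, and a brief comparison of $\Gamma(uv_1)$ with $\Gamma(e_1)$, pushed through the left $4$-cycle and the chain of $5$-cycles, forces a contradiction at the right $4$-cycle. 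The reduction to two colours is exactly what collapses your open-ended propagation into these two cases.

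On your alternative of ``peeling $v_m,w_{m-1}$ by Lemma~\ref{secq} and Claim~\ref{long}'': be careful, the inequality in Claim~\ref{long} reads $md(G)\le md(G')$ with $G'$ the \emph{subdivided} graph. Peeling therefore yields $md(D_m)\le md(D_m-v_m)$ from Lemma~\ref{secq} and $md(D_{m-1})\le md(D_m-v_m)$ from Claim~\ref{long}, which do not combine to $md(D_m)\le md(D_{m-1})$. Claim~\ref{long} is useful here only in its bijective form, to push a \emph{given} $MD$-coloring down once you already know two specific adjacent edges share a colour --- and establishing that is precisely the content of the paper's spine dichotomy. Your direct $4$- and $5$-cycle propagation could probably be completed, but without the two-colour reduction the case analysis is noticeably heavier than what the paper does.
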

\begin{proof}
Because $H_2=K_2, H_3=K_3, H_4=K_4^-$ and $H_5=K_{2,3}$, by Lemma \ref{kst} we have $md(H_n)=1$ for $2\leq n\leq 5$.
We proceeds the proof by induction on $n$. The lemma holds when $n\leq 5$. Now suppose $n\geq 6$.

If $n$ is even, then $H_n=H_{n-1}\cup K_3$ and the intersecting edge of $H_{n-1}$ and  $K_3$ is a verge of $H_{n-1}$.
Since $md(H_{n-1})=md(K_3)=1$, by Lemma \ref{subb} we have $md(H_n)=1$. Therefore, we only need to show that $md(H_n)=1$ when $n$ is odd.
Let $n=2k-1$ and $k\geq 3$.

Let $H_n=H_{2k-1}$ be a graph obtained by inserting new vertices $w_2,\cdots,w_{k-1}$ to $uv_2,\cdots,uv_{k-1}$ of $SW(u;v_1v_2\cdots v_k)$, respectively.
Here $e_i=v_iv_{i+1}$ for $i\in [k-1]$ and $P=v_1e_1\cdots e_{k-1}v_k$ is a path.

We proceeds the proof by contradiction. Suppose $md(H_{2k-1})\geq 2$. Then by Lemma \ref{leqk}, there exists an $MD$-coloring $\Gamma$ of $H_{2k-1}$
such that $|\Gamma|=2$, i.e., every edge of $H_{2k-1}$ is either colored by $1$ or colored by $2$. We distinguish the following two cases.

{\em Case 1.} There exist adjacent edges $e_i$ and $e_{i+1}$ of $P$ such that $\Gamma(e_i)=\Gamma(e_{i+1})$.

Let $H=H_{2k-1}-w_{i+1}$.
Then $\Gamma$ is an $MD$-coloring restricted on $H$. Furthermore, $|\Gamma(H)|=2$. Otherwise suppose all edges of $H$ are colored by $1$.
Since $|\Gamma|=2$, at least one of $e_1$ and $e_2$ is colored by $2$ under $\Gamma$. Since $e_1$ and $e_2$ are in the $5$-cycle
$C=H_{2k-1}[u,w_i,v_i,v_{i+1},w_{i+1}]$, $\Gamma$ is not an $MD$-coloring restricted on $C$, a contradiction.

Let $H'$ be a graph obtained from $H$ by splitting off $e_i$ and $e_{i+1}$ from $v_{i+1}$.
By Claim \ref{long}, there is an $MD$-coloring $\Gamma'$ of $H'$ such that $|\Gamma'|=2$. However, $H'=H_{2k-3}$, and by induction, $md(H')=1$,
a contradiction.

{\em Case 2.} Assigning colors $1$ and $2$ alternately on $P$, i.e., $\Gamma(e_j)=1$ when $j$ is odd and $\Gamma(e_j)=2$ when $j$ is even.

\begin{figure}[h]
    \centering
    \includegraphics[width=300pt]{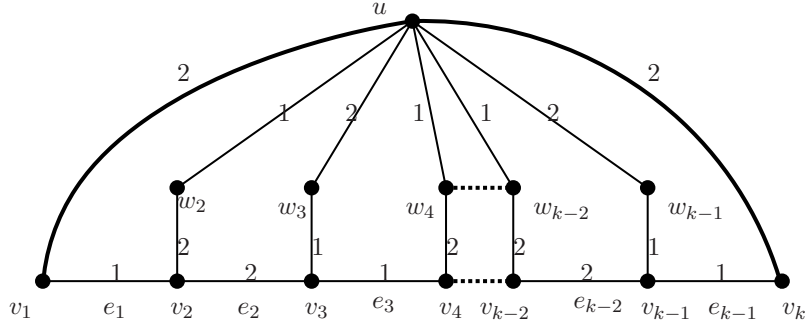}\\
    \caption{The graph for Case $2$ with $k$ is even.} \label{Case2}
\end{figure}

If $\Gamma(uv_1)=\Gamma(e_1)=1$, then $\Gamma$ is a trivial $MD$-coloring restricted on the $4$-cycle $H_{2k-1}[u,v_1,v_2,w_2]$, and so $\Gamma(uw_2)=\Gamma(w_2v_2)=1$.
Let $H$ be a graph obtained from $H_{2k-1}$ by splitting off $uw_2$ and $w_2v_2$ from $w_2$. Then by Claim \ref{long}, there is an $MD$-coloring $\Gamma'$ of $H'$ such that
$|\Gamma'|=2$. However, $H'=H_{2k-2}$, and by induction, $md(H')=1$, a contradiction.

If $\Gamma(uv_1)\neq\Gamma(e_1)$, then each $5$-cycle $C_i=H_{2k-1}[u,w_i,v_i,v_{i+1},w_{i+1}]$ is colored non-trivially under $\Gamma$.
Furthermore, $\Gamma(w_iv_i)=\Gamma(e_i)$ for $i=2,\cdots,k-1$. This implies that $\Gamma(w_{k-2}v_{k-2})=\Gamma(e_{k-2})=\Gamma(uw_{k-1})$.
Since $\Gamma(e_{k-2})\neq\Gamma(e_{k-1})$, we have $\Gamma(uw_{k-1})\neq\Gamma(e_{k-1})$, which contradicts that $\Gamma$ is an $MD$-coloring restricted
on the $4$-cycle $H_{2k-1}[u,w_{k-1},v_{k-1},v_k]$.

According to the above two cases, one has $md(H_{2k-1})=1$. The proof is thus complete. 
\end{proof}

\begin{lemma}
If $3\leq r\leq \left\lfloor\frac{n}{2}\right\rfloor$ and $n\geq 6$, then $md(H_{n,r})=r$.
\end{lemma}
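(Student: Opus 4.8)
The plan is to dispatch first the degenerate case $n$ even with $r=\tfrac n2$, where $H_{n,r}=C_n$ and $md(C_n)=\lfloor n/2\rfloor=r$ by Proposition \ref{block}(2), and then treat all the remaining cases uniformly. In those cases $H_{n,r}=I(P,H_m)$ with $m$ odd, $3\le m<n$ (namely $m=n-2r+1$ if $n$ is even and $m=n-2r+2$ if $n$ is odd), the path $P$ having $\ell$ edges where $\ell=2r$ if $n$ is even and $\ell=2r-1$ if $n$ is odd, and the two vertices of $H_m$ with which the ends of $P$ are identified are the ends of a verge of $H_m$; in particular they are joined by an edge $g\in E(H_m)$, so $C:=g+P$ is a cycle of $H_{n,r}$ with $|E(C)|=\ell+1\in\{2r+1,2r\}$, whence $md(C)=\lfloor(\ell+1)/2\rfloor=r$ by Proposition \ref{block}(2). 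I will also use that $H_m$ is $2$-connected (it is $K_3$, $K_{2,3}$, or some $D_k$) and that $md(H_m)=1$ by the preceding lemma.

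For the upper bound $md(H_{n,r})\le r$: let $\Gamma$ be any $MD$-coloring of $H_{n,r}$. Since $H_m$ is a connected subgraph, Proposition \ref{key} says $\Gamma$ restricted to $H_m$ is an $MD$-coloring of $H_m$; as $md(H_m)=1$ it is monochromatic, say with color $1$, so every color other than $1$ occurs only on $E(P)$. Applying Proposition \ref{key} to the cycle $C$, the restriction of $\Gamma$ to $C$ is an $MD$-coloring of $C$, hence uses at most $md(C)=r$ colors; but $E(C)=E(P)\cup\{g\}$ and $\Gamma(g)=1$, so the colors on $C$ are exactly $\{1\}\cup\Gamma(E(P))$, which is the whole palette of $\Gamma$. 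Thus $|\Gamma|\le r$.

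For the lower bound $md(H_{n,r})\ge r$: fix an extremal $MD$-coloring $\sigma$ of $C$, using $r$ colors, and relabel so that $\sigma(g)=1$. Define $\Gamma$ on $H_{n,r}$ by $\Gamma=\sigma$ on $E(P)$ and $\Gamma\equiv1$ on $E(H_m)$, so $|\Gamma|=r$. I check $\Gamma$ is an $MD$-coloring via the color-induced-set criterion. If one of the two vertices is $x\in V(H_m)\setminus\{u_1,u_2\}$, then all edges at $x$ lie in $H_m$ and have color $1$, so deleting the color-$1$ class isolates $x$ and separates it from everything. It remains to separate two vertices $y,z$ lying on $P$ (equivalently on $C$). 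The coloring $\sigma$ gives a monochromatic $yz$-cut $S$ in $C$, consisting of two edges of some color $c$ (three when $c=1$ and $C$ is an odd cycle). If $c\ne1$ then $S\subseteq E(P)$; deleting $S$ from $H_{n,r}$ leaves $H_m$ intact, and the two arcs of $C-S$ — the one through $g$ (hence containing $u_1,u_2$) and the other, a detached sub-path of $P$ — become in $H_{n,r}-S$ the $H_m$-component enlarged by the first arc, and the detached sub-path; so $y,z$ are separated exactly as in $C-S$. If $c=1$, delete instead the whole color-$1$ class $E(H_m)\cup(S\setminus\{g\})$ of $\Gamma$; removing the edges of $H_m$ shatters $H_m$ into isolated vertices and in particular kills every $u_1$–$u_2$ route through $H_m$, so the partition this induces on $V(P)$ coincides with the partition of $V(C)$ given by $C-S$, and again $y,z$ get separated. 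Hence $\Gamma$ is an $MD$-coloring and $md(H_{n,r})\ge r$.

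The main obstacle is the last paragraph, i.e.\ showing that the cyclic coloring $\sigma$ really lifts to an $MD$-coloring of $H_{n,r}$; the delicate point is the $c=1$ case, where the color-$1$ class of $\Gamma$ is strictly larger than $S$ (it swallows all of $E(H_m)$), so one must verify that deleting it still induces precisely the right partition of $V(P)$ and does not accidentally reconnect $y$ and $z$ through $H_m$. A minor additional point is to nail down, in the definition of $I(P,H_m)$, that the identification is at the ends of a verge (so that $g$ exists and $C$ has length $\ell+1$, not longer), and to note that in the base case $m=3$ the graph $H_3=K_3$ has no verge in the formal sense but any of its edges plays the role of $g$.
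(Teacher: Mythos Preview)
Your proof is correct, and at the level of the coloring itself your lower bound is exactly the paper's: the paper colors $e_i$ by $i\bmod r$ on the path and puts the whole of $R$ in one fixed color, which is precisely what one gets by taking a periodic extremal $MD$-coloring of the cycle $C=P+g$ and extending it by the color of $g$ to all of $H_m$. Your verification of that coloring --- in particular the $c=1$ case, where one must check that swallowing all of $E(H_m)$ into the deleted class still reproduces on $V(P)$ exactly the partition given by $C-S$ --- is more explicit than the paper's ``it is easy to verify'', and it is sound: since $g\in S$ when $c=1$, the surviving edge set $E(P)\setminus S$ is identical in $H_{n,r}$ and in $C$, so the components on $V(P)$ coincide.

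The one place you differ from the paper is the upper bound. The paper argues by counting: every edge lies on a cycle, so every color is used at least twice; the block $R$ is monochromatic; therefore at most $r$ colors. Read literally this counting yields only $|\Gamma|\le r+1$ (colors other than the $R$-color occupy at most $2r$ path edges, two each), and one needs the extra observation that the $R$-color must also sit on at least one edge of the path (to separate $u_1$ from $u_2$), leaving at most $2r-1$ path edges for the remaining colors and hence at most $r-1$ of them. Your route avoids this bookkeeping entirely by restricting $\Gamma$ to the short cycle $C=P+g$, invoking $md(C)=\lfloor |C|/2\rfloor=r$, and noting that $\Gamma(C)$ already contains the $R$-color and all the path colors. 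That is cleaner, but it does hinge on $u_1u_2$ being an edge of $H_m$ --- a point the paper's definition of $I(P,G)$ leaves unspecified and which you correctly flag (taking $u_1,u_2$ to be the ends of a verge, or any edge of $K_3$ when $m=3$, makes everything work).
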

\begin{proof}
Let $Q_1=v_1e_1v_2e_2\cdots v_{2r}e_{2r}v_{2r+1}$ and $Q_2=v_1e_1v_2e_2\cdots v_{2r-1}e_{2r-1}v_{2r}$. Let $R_1=H_{n-2r+1}$ and $R_2=H_{n-2r+2}$. We will construct $H_{n,r}$ below.
If $n$ is even and $r=\frac{n}{2}$, then $H_{n,r}=C_n$;
if $n$ is even and $3\leq r<\frac{n}{2}$, then $H_{n,r}=I(Q_1,R_1)$; if $n$ is odd, then $H_{n,r}=I(Q_2,R_2)$.

{\em Case 1.} $n$ is even and $r=\frac{n}{2}$.

Since $H_{n,r}=C_n$, by Proposition \ref{block}, $md(H_{n,r})=r$ holds.

{\em Case 2.} $n$ is even and $3\leq r<\frac{n}{2}$.

Color $e_i$ by $j\in[r]$ if $i\equiv j\pmod r$ and color the edges of $R_1$ by $1$. It is easy to verify that the edge-coloring
is an $MD$-coloring of $H_{n,r}$. Therefore, $md(H_{n,r})\geq r$. Since every edge of $H_{n,r}$ is in some cycles,
every color of an extremal $MD$-coloring of $H_{n,r}$ is used on at least two edges. Furthermore, since $md(R_1)=1$,
all edges of $R_1$ are colored the same under the extremal $MD$-coloring. Therefore, there are at most $r$ colors in
the extremal $MD$-coloring, and so $md(H_{n,r})\leq r$. Thus, $md(H_{n,r})=r$.

{\em Case 3.} $n$ is odd and $3\leq r\leq\frac{n}{2}$.

Color $e_i$ by $j\in[r]$ if $i\equiv j\pmod r$ and color the edges of $R_2$ by $r$. It is obvious that the edge-coloring
of $H_{n,r}$ is an $MD$-coloring. Therefore, $md(H_{n,r})\geq r$. As discussed in Case $2$, since every color of an extremal
$MD$-coloring of $H_{n,r}$ is used on at least two edges and since $md(R_2)=1$, we have $md(H_{n,r})\leq r$. Thus, $md(H_{n,r})=r$.
\end{proof}

\begin{lemma}\label{2-n}
For $n\geq 4$, $g(n,2)=\left\lceil\frac{3}{2}(n-1)\right\rceil-1$. For $n\geq 6$, $g(n,\left\lfloor\frac{n}{2}\right\rfloor)=\mu_{n,\left\lfloor\frac{n}{2}\right\rfloor}$.
\end{lemma}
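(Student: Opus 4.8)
The statement splits into the two equalities $g(n,2)=\lceil\frac{3}{2}(n-1)\rceil-1$ and $g(n,\lfloor n/2\rfloor)=\mu_{n,\lfloor n/2\rfloor}$, and for each I would prove separately the upper bound on $g$ (a sharpness example) and the lower bound on $g$ (that fewer edges force large $md$). For $g(n,2)$ the sharpness example is $H_n$, which is connected of order $n$ with $e(H_n)=\lceil\frac{3}{2}(n-1)\rceil$ and $md(H_n)=1$, so $g(n,2)\le\lceil\frac{3}{2}(n-1)\rceil-1$. For the reverse inequality I would argue by contraposition: if $G$ is connected of order $n$ with $md(G)=1$, then by Lemma \ref{matching-cut} $G$ has no matching cut, i.e. $G$ is matching immune, so Theorem \ref{mat-i} gives $e(G)\ge\lceil\frac{3}{2}(n-1)\rceil$; equivalently $e(G)\le\lceil\frac{3}{2}(n-1)\rceil-1$ forces $md(G)\ge 2$.

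For $g(n,\lfloor n/2\rfloor)$, put $r=\lfloor n/2\rfloor$ and recall from the Remark that $\mu_{n,r}=n$ if $n$ is even and $\mu_{n,r}=n+1$ if $n$ is odd. Sharpness $g(n,r)\le\mu_{n,r}$ needs a connected graph of order $n$ with $\mu_{n,r}+1$ edges and $md<r$. For $n\ge 8$ the graph $H_{n,r-1}$ works: here $r-1\ge 3$, so the previous lemma gives $md(H_{n,r-1})=r-1$ and $e(H_{n,r-1})=\mu_{n,r-1}$, and a direct substitution in the Remark's formula gives $\mu_{n,r-1}=\mu_{n,r}+1$ in both parities of $n$. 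The two leftover cases $n=6$ and $n=7$ I would settle with explicit small graphs; e.g. $\Theta_{2,2,3}$ works for $n=6$ (it has $6$ vertices and $7=\mu_{6,3}+1$ edges, it has a matching cut so $md\ge 2$ by Lemma \ref{matching-cut}, and using that an $MD$-colouring of a $4$-cycle or a $5$-cycle is trivial or alternating one checks no $3$-colouring of $\Theta_{2,2,3}$ is an $MD$-colouring, so $md(\Theta_{2,2,3})=2$), and a similarly small graph handles $n=7$.

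For the lower bound $g(n,r)\ge\mu_{n,r}$, again by contraposition, suppose $e(G)\le\mu_{n,r}$; then $G$ has cyclomatic number at most $1$ if $n$ is even and at most $2$ if $n$ is odd. If $n$ is even, $G$ is a tree or unicyclic and Proposition \ref{block} gives $md(G)\ge\lfloor n/2\rfloor$. If $n$ is odd, I would decompose $G$ into blocks: since the cyclomatic number is at most $2$, either every non-trivial block is a cycle, or exactly one block $B$ is a generalized theta graph and all others are $K_2$. In the first case, feeding $md(C_{n_i})=\lfloor n_i/2\rfloor\ge(n_i-1)/2$, $md(K_2)=1$ and $\sum_i(|B_i|-1)=n-1$ into Proposition \ref{block} yields $md(G)\ge\frac{1}{2}(\text{number of trivial blocks})+\frac{n-1}{2}\ge\frac{n-1}{2}=\lfloor n/2\rfloor$; in the second case $md(G)=md(B)+(n-|B|)$, and provided $md(B)\ge\lfloor |B|/2\rfloor-1$ (which one checks for all generalized theta graphs, the extreme cases being $K_4^-$, $K_{2,3}$ and $\Theta_{1,2,3}$) one again gets $md(G)\ge\frac{n-1}{2}$ using $|B|<n$ and $n$ odd. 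The real content is therefore the inequality $md(\Theta_{a,b,c})\ge\lfloor m/2\rfloor$ for a generalized theta graph on $m$ vertices with $m$ odd, $m\ge 7$ (with Theorem \ref{n-2} this is an equality). Here the plan is to iterate Lemma \ref{2-mat-cut}: two non-adjacent edges of a longest path form a matching bond whose contraction shortens that path by $2$, so $md(\Theta_{a,b,c})\ge md(\Theta_{a,b,c-2})+1$, and since $\lfloor m/2\rfloor$ drops by exactly $1$ at each step the bound propagates back up; paths of odd length one first reduces to length $1$, exposing a parallel edge so that Proposition \ref{parallel} replaces the graph by a strictly shorter cycle (for which the value is known), and a theta graph containing a triangle is handled the same way via Claim \ref{long} together with Proposition \ref{parallel}. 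The step I expect to be the main obstacle is that this reduction does not bottom out cleanly for every theta graph: a few small ``irreducible'' ones — notably $\Theta_{2,3,3}$ and the family $\Theta_{2,2,2k}$ ($k\ge 2$) — only reduce to deficient graphs ($K_{2,3}$, $\Theta_{1,2,3}$, $K_4^-$), so for these one must construct an $MD$-colouring attaining $\lfloor m/2\rfloor$ by hand, by colouring a long even cycle through $m-1$ of the vertices as an extremal $MD$-colouring and using the forced opposite-equal structure of the $4$-cycle on the remaining short ear; organizing the reduction so that it always terminates at such a non-deficient base graph, and verifying these hand constructions, is the delicate part. Everything else — the $g(n,2)$ part, both sharpness examples, the even-$n$ lower bound, and the all-cycles block case — is routine given the lemmas already available.
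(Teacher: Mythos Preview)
Your treatment of $g(n,2)$ and of the even-$n$ half of $g(n,\lfloor n/2\rfloor)$ is exactly the paper's argument, and for the upper bound $g(n,\lfloor n/2\rfloor)\le\mu_{n,\lfloor n/2\rfloor}$ you do essentially what the paper does (produce a witness graph with one extra edge and smaller $md$), only more explicitly for the small cases $n=6,7$.

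Where you genuinely diverge is the lower bound for odd $n$. The paper does \emph{not} pass through a block decomposition or a classification of theta graphs. Instead it argues directly on $G$: any connected $G$ with $|G|=n\ge 7$ and $e(G)\le n+1$ has a minimal matching cut $M$ with $|M|\le 2$ (found by a short degree-count locating two adjacent $2$-degree vertices when there is no cut-edge), and then Lemma~\ref{2-mat-cut} gives $md(G)\ge md(G')+1$ for $G'$ the simple graph of $G/M$; the case $|M|=2$ feeds back into the same statement for odd $n-2$, the case $|M|=1$ lands in the even setting with one extra edge, and $n=7$ is dispatched by an explicit picture. So the paper runs a single induction on $n$ using the matching-cut contraction on the whole graph, whereas you first strip off the block structure and then run the same contraction inside a single theta block. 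Your route is perfectly correct and arguably more transparent about \emph{why} the bound holds (all the cyclomatic complexity sits in one theta block), but it pays for that by the irregular bottom you already flagged: the families like $\Theta_{2,2,2k}$ where the reduction undershoots by one and you must exhibit an $MD$-colouring by hand. The paper's direct induction avoids this zoo of small theta graphs at the cost of one ad hoc base case and an implicit appeal to the even-$n$ result at a shifted value of $r$. (Incidentally, $\Theta_{2,3,3}$ is not actually deficient under your reduction --- contracting two non-adjacent edges on a $3$-ear lands you in $\Theta_{1,2,3}$, which has $md=2$ --- so you can drop it from your list of irreducibles.)
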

\begin{proof}
For $n\geq 4$, since $md(H_n)=1$ and $e(H_n)\leq\left\lceil\frac{3}{2}(n-1)\right\rceil$, we have $g(n,2)\leq\left\lceil\frac{3}{2}(n-1)\right\rceil-1$.
By Theorem \ref{mat-i}, $G$ has a matching cut when $e(G)\leq\left\lceil\frac{3}{2}(n-1)\right\rceil-1$, and by Lemma \ref{matching-cut}, we have $md(G)\geq 2$.
Therefore, $g(n,2)=\left\lceil\frac{3}{2}(n-1)\right\rceil-1$.

If $n\geq 6$ and $n$ is even, $g(n,\frac{n}{2})\leq \mu_{n,\frac{n}{2}} =n$ by Corollary \ref{lleq}.
Since any connected graph $G$ with $e(G)\leq n$ is either a tree or a unicyclic graph, we have $md(G)\geq \frac{n}{2}$ by Proposition \ref{block}.
Therefore, $g(n,\frac{n}{2})= n$ when $n$ is even.

If $n\geq 7$ and $e(G)=n+1$, we first show that $G$ has a minimal matching cut $M$ such that $|M|\leq 2$.
If $G$ has a cut-edge, then $|M|=1$. Otherwise $G$ has at most two non-trivial blocks. Furthermore,
either $G$ has exactly two $3$-degree vertices and the other vertices are $2$-degree vertices,
or $G$ has one $4$-degree vertex and the other vertices are $2$-degree vertices,
and both cases imply that there are two adjacent $2$-degree vertices, say $u$ and $v$. Let $e_1=xu$, $e_2=uv$ and $e_3=vy$, where $x\neq v$ and $y\neq u$.
If $x\neq y$, $M=\{e_1,e_3\}$; if $x=y$, one block of $G$ is $K_3$ and the other block is an $(n-2)$-cycle. Since $n\geq 7$, the $(n-2)$-cycle has a matching cut $M$
and $|M|=2$. $M$ is also a matching cut of $G$.

Now we show that if $n$ is odd and $n\geq 7$, $g(n,\left\lfloor\frac{n}{2}\right\rfloor) =n+1$. By Corollary \ref{lleq},
$g(n,\left\lfloor\frac{n}{2}\right\rfloor)\leq \mu_{n,\left\lfloor\frac{n}{2}\right\rfloor} =n+1$. In order to show 
$g(n,\left\lfloor\frac{n}{2}\right\rfloor)= \mu_{n,\left\lfloor\frac{n}{2}\right\rfloor}=n+1$,
we need to prove that any graph $G$ with $|G|=n$ and $e(G)\leq n+1$ has $md(G)\geq\left\lfloor\frac{n}{2}\right\rfloor$. Let $G$ be a connected graph with $|G|\geq 7$ and $e(G)\leq n+1$.
Then $G$ has a minimal matching cut $M$ such that $|M|\leq 2$. Let $G'$ be the underling simple graph of $G/M$. By Lemma \ref{2-mat-cut},
$md(G')\leq md(G)-1$. So, we only need to show $md(G')\geq \left\lfloor\frac{n}{2}\right\rfloor-1$.

If $|M|=1$, since $|G'|$ is even and $e(G')= |G'|+1=\mu_{n-1,\left\lfloor\frac{n-1}{2}\right\rfloor-1}$, we have 
$md(G')\geq \left\lfloor\frac{n-1}{2}\right\rfloor-1=\left\lfloor\frac{n}{2}\right\rfloor-1$.

If $|M|=2$, there are two cases to consider.

{\em Case 1.} $n=7$.

Then $|G/M|=5$ and $e(G/M)\leq6$. It is easy to verify that $G/M=H_5$ is the only such graph with $md(G/M)=1$. If $G/M\neq H_5$,
then $md(G/M)=2=\left\lfloor\frac{n}{2}\right\rfloor-1$; if $G/M=H_5$, then the graph $G$ and one of its $MD$-colorings are shown as in Figure \ref{H-5}, and so $md(G)\geq3$.

\begin{figure}[h]
    \centering
    \includegraphics[width=100pt]{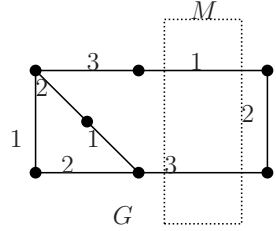}\\
    \caption{The graph $G$ that satisfies $G/M=H_5$, and an $MD$-coloring of $G$.} \label{H-5}
\end{figure}

{\em Case 2.} $n\geq 9$. Since $|G'|=n-2$ is odd and $e(G')\leq |G'|+1=\mu_{n-2,\left\lfloor\frac{n-2}{2}\right\rfloor}$,
by induction, $md(G')\geq \left\lfloor\frac{n-2}{2}\right\rfloor=\left\lfloor\frac{n}{2}\right\rfloor-1$.
\end{proof}

\begin{lemma}\label{E-G-gleq}
If $2\leq r-1<r\leq \frac{n}{2}$, then $g(n,r)\leq g(n,r-1)$.
\end{lemma}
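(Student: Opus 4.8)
The plan is to show the contrapositive-flavored monotonicity directly: since $g(n,r)$ is the largest $m$ such that every connected $n$-vertex graph with $e(G)\le m$ satisfies $md(G)\ge r$, it suffices to exhibit, for $m=g(n,r)$, the fact that every such graph also has $md(G)\ge r-1$, which is immediate because $md(G)\ge r > r-1$; hence $g(n,r-1)\ge g(n,r)$. Wait — that argument gives the inequality the wrong way only if $g$ were defined via ``$\ge r$'' being harder for larger $r$; let me instead argue it the way the paper's definition forces. By definition $g(n,r-1)$ is the maximum $m$ such that $e(G)\le m$ forces $md(G)\ge r-1$. Every graph forced to have $md(G)\ge r$ is a fortiori forced to have $md(G)\ge r-1$, so the threshold for the weaker conclusion $md(G)\ge r-1$ can only be larger: $g(n,r-1)\ge g(n,r)$. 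That is the whole content, but to make it a genuine argument I would phrase it through a witness graph.

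Concretely, first I would recall that $g(n,r)$ is finite and well-defined for $3\le r\le \lfloor n/2\rfloor$ (it lies between $g(n,\lfloor n/2\rfloor)=\mu_{n,\lfloor n/2\rfloor}$ and $g(n,2)=\lceil \tfrac32(n-1)\rceil-1$ by Lemma \ref{2-n}, and these form a decreasing-looking range — which is exactly what this lemma is establishing inductively). Then I would take an arbitrary connected graph $G$ on $n$ vertices with $e(G)\le g(n,r)$; by the defining property of $g(n,r)$ we get $md(G)\ge r$, and since $r-1<r$ this yields $md(G)\ge r-1$. As $G$ was arbitrary, the integer $g(n,r)$ satisfies the defining condition in the definition of $g(n,r-1)$, namely ``$e(G)\le g(n,r)\Rightarrow md(G)\ge r-1$ for all connected $n$-vertex $G$''; since $g(n,r-1)$ is the \emph{maximum} such integer, $g(n,r-1)\ge g(n,r)$, which is the claim.

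The only subtlety — and the one place I would be careful — is the edge case: one must make sure that $g(n,r-1)$ is actually defined, i.e.\ that the set of valid thresholds for the conclusion $md(G)\ge r-1$ is nonempty and bounded above, so that ``maximum'' makes sense. Nonemptiness: the threshold $n-1$ (or indeed any small constant) works since a connected graph with few edges is a tree or near-tree with large $md$ by Proposition \ref{block}; boundedness above: take the witness graph $H_{n,r-1}$ with $md(H_{n,r-1})=r-1<r$ realized in the earlier lemmas — more precisely $H_n$ or the appropriate extremal construction shows some graph with bounded edge count fails $md\ge r$, but here we only need that $g(n,r-1)$ is finite, which follows because, say, $K_n$ has $md=1<r-1$ once $r\ge 3$. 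Given $2\le r-1$, i.e.\ $r\ge 3$, such a bounding example exists, so $g(n,r-1)$ is a well-defined finite integer and the comparison above is valid. Thus $g(n,r)\le g(n,r-1)$. \qed
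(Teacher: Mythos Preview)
Your argument is correct and matches the paper's own proof exactly: take any connected $G$ on $n$ vertices with $e(G)\le g(n,r)$, conclude $md(G)\ge r\ge r-1$, and hence $g(n,r)$ is a valid threshold for the $r-1$ condition, giving $g(n,r-1)\ge g(n,r)$. The paper's version is just the two-line core of this without the hesitation at the start or the well-definedness discussion (which is unnecessary here since $g(n,r-1)$ is already assumed defined in the problem setup).
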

\begin{proof}
For any graph $G$ with $v(G)=n$ and $e(G)\leq g(n,r)$, $md(G)\geq r$.
This also implies $md(G)\geq r-1$,
i.e., $g(n,r)\leq g(n,r-1)$.
\end{proof}

\begin{lemma}\label{E-G-gfunodd}
If $l\geq 3$ and $n\geq 7$ is odd, then $g(n,l)=\mu_{n,l}=\frac{3n+1}{2}-l$.
\end{lemma}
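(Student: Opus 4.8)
The plan is to prove $g(n,l)\le\mu_{n,l}$ and $g(n,l)\ge\mu_{n,l}$ separately, in the effective range $3\le l\le\lfloor n/2\rfloor$ (the value at $l=\lfloor n/2\rfloor$ being already known from Lemma~\ref{2-n}). The upper bound is short: for $l\ge4$ the graph $H_{n,l-1}$ has $e(H_{n,l-1})=\mu_{n,l-1}=\mu_{n,l}+1$ and $md(H_{n,l-1})=l-1<l$ by the lemma computing $md(H_{n,r})$, which forces $g(n,l)\le\mu_{n,l}$; and for $l=3$ one has $g(n,3)\le g(n,2)=\lceil\frac32(n-1)\rceil-1=\mu_{n,3}$ by Lemmas~\ref{E-G-gleq} and~\ref{2-n}. (This is also contained in Corollary~\ref{lleq}.)

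The substance is the lower bound: every connected $G$ with $v(G)=n$ and $e(G)\le\mu_{n,l}$ has $md(G)\ge l$. I would prove it by induction on the odd integer $n\ge7$. The base case $n=7$ forces $l=3$, $\mu_{7,3}=8$, and is exactly $g(7,3)=8$ from Lemma~\ref{2-n}; for every $n$, the case $l=\lfloor n/2\rfloor$ is likewise Lemma~\ref{2-n}. Since $\mu_{n,l+1}=\mu_{n,l}-1$, any $G$ with $e(G)<\mu_{n,l}$ satisfies $e(G)\le\mu_{n,l+1}$ and hence $md(G)\ge l+1$ by the case $l+1$; so it suffices to treat $3\le l\le\lfloor n/2\rfloor-1$ with $e(G)=\mu_{n,l}=\frac{3n+1}{2}-l$ exactly. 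Such a $G$ has $n+2\le e(G)\le\frac{3n-5}{2}<\lceil\frac32(n-1)\rceil$, so by Theorem~\ref{mat-i} it is not matching immune; fix a minimal matching cut $M$ and let $G'$ be the underlying simple graph of $G/M$. Then $v(G')=n-|M|$, $e(G')\le e(G)-|M|$, and by Lemma~\ref{2-mat-cut} $md(G)\ge md(G')+1$, so it is enough to bound $e(G')$ below $g(v(G'),l-1)$ and invoke the induction hypothesis, using for even $v(G')$ the analogue of this lemma for even order (proved by the same induction).

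The core is the case analysis on $|M|$. If $|M|=1$, then $M$ is a cut-edge, $G/M$ has no parallel edges, and $v(G/M)=n-1$ is even with $e(G/M)=\mu_{n,l}-1$; a direct check shows $e(G/M)\le g(n-1,l-1)$, so the even-order result (resp.\ Lemma~\ref{2-n} when $l=3$) gives $md(G/M)\ge l-1$, whence $md(G)\ge l$. If $|M|=2$, then $v(G')=n-2$ is odd and $e(G')\le\mu_{n,l}-2=\mu_{n-2,\,l-1}$, so for $l\ge4$ the induction hypothesis at order $n-2$ gives $md(G')\ge l-1$ and we are done.

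Two corners remain and are where I expect the real difficulty. First, $|M|\ge3$: now $e(G)-|M|$ no longer falls below $g(n-|M|,l-1)$, so one must exploit the parallel edges created by contracting $M$ — note that this contraction leaves the cyclomatic number $e(G')-v(G')+1=\frac{n-5}{2}$ unchanged, so $G'$ is still sparse enough to carry a matching cut and the reduction can be iterated. Second, $l=3$ with $|M|=2$: here the goal $md(G)\ge3$ is stronger than the $md(G)\ge md(G')+1\ge2$ one gets from $g(n-2,2)$, so one performs a second matching-cut contraction when $G'$ admits one, and, in the residual case where $G'=G/M$ is a $2$-connected matching-immune graph attaining $e(G')=\lceil\frac32(v(G')-1)\rceil$ with $md(G')=1$ (so $G'$ is of the $H_{n-2}$ type), one constructs an explicit $3$-coloring of $G$ directly, by combining $MD$-colorings of the two sides $D_1,D_2$ of $M$ with a fresh color on $M$ — the device already used for the configuration $G/M=H_5$ in the proof of Lemma~\ref{2-n}. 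Handling this exceptional configuration, together with the bookkeeping of parallel edges for large $|M|$, is the main obstacle.
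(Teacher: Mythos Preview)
Your approach diverges from the paper's in a way that matters. For the lower bound you run an induction on $n$ via matching-cut contraction (Lemma~\ref{2-mat-cut}), extending the technique of Lemma~\ref{2-n}. The paper does \emph{not} do this here: its proof of Lemma~\ref{E-G-gfunodd} is non-inductive and never touches an arbitrary $G$. It uses only (i) the two boundary values $g(n,2)$ and $g\bigl(n,\tfrac{n-1}{2}\bigr)$ from Lemma~\ref{2-n}, (ii) the monotonicity $g(n,l)\le g(n,l-1)$ from Lemma~\ref{E-G-gleq}, and (iii) the witnesses $H_n$ and $H_{n,r}$. Since the total drop $g(n,2)-g\bigl(n,\tfrac{n-1}{2}\bigr)=\tfrac{n-7}{2}$ is one less than the number of steps $\tfrac{n-5}{2}$, some step has drop $0$; taking $r$ maximal with $g(n,r-1)=g(n,r)$, a short descent argument (using $H_{n,p-2}$ to forbid drops $\ge 2$ beyond $r$) pins $g(n,l)=\mu_{n,l}$ for $l\ge r$, and then $r=3$ is forced. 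All the difficulties you flag simply do not arise.

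Those difficulties are genuine gaps, not just bookkeeping. First, for $|M|=1$ you invoke the even-order result, but in the paper Lemma~\ref{E-G-gfuneven} is proved \emph{after} this lemma and depends on it; you would need a joint induction over both parities, which changes the logical structure. Second, the $|M|\ge 3$ case really fails as stated: contracting a minimal matching cut of size $3$ need not create any parallel edge (take $D_1=a_1a_2a_3$ a path and $D_2$ with $b_1,b_2,b_3$ pairwise non-adjacent), so $e(G')=e(G)-3$ exceeds $g(n-3,l-1)$ by exactly $1$ and the step does not close; iterating does not obviously recover the deficit. Third, in the $l=3$, $|M|=2$ corner you assume the residual $G'$ with $md(G')=1$ is ``of the $H_{n-2}$ type'', but the extremal matching-immune graphs of Theorem~\ref{mat-i} are not all of this form, so an explicit $3$-coloring would have to be produced for an uncontrolled family rather than a single configuration. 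The paper's sandwich argument sidesteps every one of these issues.
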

\begin{proof}
If $n$ is odd, then by Lemma \ref{2-n}, $g(n,2)=\left\lceil\frac{3(n-1)}{2}\right\rceil-1$ and 
$g(n,\frac{n-1}{2})=\mu_{n,\frac{n-1}{2}}=n+1$. Since $g(n,2)-g(n,\frac{n-1}{2})=\frac{n-1}{2}-3$ and $g(n,l-1)\geq g(n,l)$,
there is the maximum integer integer $3\leq r\leq \frac{n-1}{2}$ such that $g(n,r-1)=g(n,r)$.

\begin{claim}
$g(n,l)=\frac{3n+1}{2}-l$ for $r\leq l\leq \frac{n-1}{2}$.
\end{claim}
\begin{proof}
If $r=\frac{n-1}{2}$, then by Lemma \ref{2-n}, the result holds.
Thus, suppose $r<\frac{n-1}{2}$.
Since $g(n,l)\leq g(n,l-1)$ and $r$ is a maximum integer such that $g(n,r-1)=g(n,r)$, 
we have $g(n,l+1)<g(n,l)$ for $r\leq l\leq \frac{n-1}{2}-1$.
Suppose the claim does not hold.
Then let $p$ be the maximum integer such that $g(n,p)\leq g(n,p-1)-2$.
Thus, $g(n,l+1)=g(n,l)-1$ holds for $p\leq l\leq \frac{n-1}{2}-1$.
Since $g(n,\frac{n-1}{2})=n+1=\frac{3n+1}{2}-\frac{n-1}{2}$, $g(n,l)=\frac{3n+1}{2}-l$ holds for $p\leq l\leq \frac{n-1}{2}$.
Thus, $g(n,p-1)\geq \frac{3n+1}{2}-p+2$.
If $p-2\geq 3$, then since $e(H_{n,p-2})=\frac{3n+1}{2}-p+2\leq g(n,p-1)$ and $md(H_{n,p-2})=p-2<p-1$, 
this yields a contradiction.
If $p-2\leq 2$, then $g(n,p-1)\geq \frac{3n+1}{2}-2=\left\lceil\frac{3(n-1)}{2}\right\rceil=e(H_n)$.
However, $md(H_n)=1<p-1$, a contradiction.
Thus, $g(n,l+1)=g(n,l)-1$ holds for $r\leq l\leq \frac{n-1}{2}$.
Since $g(n,\frac{n-1}{2})=n+1=\frac{3n+1}{2}-\frac{n-1}{2}$, the result holds.
\end{proof}

Therefore, $g(n,r-1)=g(n,r)=\frac{3n+1}{2}-r$.
For any graph $G$ with $v(G)=n$ and $e(G)\leq \frac{3n+1}{2}-r$, $md(G)\geq r$.
If $r\geq 4$, then since $e(H_{n,r-1})=\frac{3n+1}{2}-r=g(n,r)$ and $md(H_{n,r-1})=r-1<r$, 
this yields a contradiction.
Thus, $r=3$. Therefore, $g(n,l)=\frac{3n+1}{2}-l$ for $3\leq l\leq \frac{n-1}{2}$ and
$g(n,2)=g(n,3)=\frac{3n+1}{2}-3$.
\end{proof}

\begin{lemma}\label{E-G-gfuneven}
If $\left\lfloor\frac{n}{2}\right\rfloor-1\geq r\geq 3$ and $n\geq 8$ is even, then $g(n,r)=\mu_{n,r}$.
\end{lemma}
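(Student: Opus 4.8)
The plan is to prove the two inequalities $g(n,r)\le\mu_{n,r}$ and $g(n,r)\ge\mu_{n,r}$ separately, recalling that for even $n$ one has $\mu_{n,r}=\left\lceil\frac32(n-2r)\right\rceil+2r=\frac{3n}{2}-r$. The upper bound $g(n,r)\le\mu_{n,r}$ for $3\le r\le\frac n2-1$ is Corollary \ref{lleq}. So the real content is the lower bound: I want to show that every connected graph $G$ with $|G|=n$ and $e(G)\le\frac{3n}{2}-r$ has $md(G)\ge r$. By Proposition \ref{parallel} I may assume $G$ is simple. Since $e(G)\le\frac{3n}{2}-r<\frac{3n}{2}$, the graph $G$ cannot have $\delta(G)\ge3$, so it has a vertex $v$ of degree at most $2$; the idea is to delete or split off $v$ so as to land on a graph of odd order $n-1$ to which the already-proven odd case, Lemma \ref{E-G-gfunodd}, applies.

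If $d_G(v)=1$, let $uv$ be its pendant edge and put $G'=G-v$. By Proposition \ref{block}, $md(G)=md(G')+1$, and $G'$ is connected with $|G'|=n-1$ (odd, $\ge7$) and $e(G')\le\frac{3n}{2}-r-1$. When $r-1\ge3$, Lemma \ref{E-G-gfunodd} gives $g(n-1,r-1)=\frac{3(n-1)+1}{2}-(r-1)=\frac{3n}{2}-r>e(G')$, and when $r=3$, Lemma \ref{2-n} gives $g(n-1,2)=\left\lceil\frac32(n-2)\right\rceil-1=\frac{3n}{2}-4\ge e(G')$; either way $md(G')\ge r-1$, hence $md(G)\ge r$. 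If $d_G(v)=2$, let $a\ne b$ be its neighbours and let $G_1$ be obtained from $G$ by splitting off $va,vb$ at $v$ (passing to the underlying simple graph if the new edge $ab$ duplicates an existing one). Then $G_1$ is connected with $|G_1|=n-1$ and $e(G_1)\le e(G)-1\le\frac{3n}{2}-r-1$, and by Claim \ref{long}, $md(G)\ge md(G_1)$. Since $n-1$ is odd with $n-1\ge7$ and $3\le r\le\frac n2-1=\frac{(n-1)-1}{2}$, Lemma \ref{E-G-gfunodd} gives $g(n-1,r)=\frac{3(n-1)+1}{2}-r=\frac{3n}{2}-r-1\ge e(G_1)$, hence $md(G_1)\ge r$ and $md(G)\ge r$.

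In both cases $md(G)\ge r$, so $g(n,r)\ge\frac{3n}{2}-r$, and combined with Corollary \ref{lleq} this yields $g(n,r)=\mu_{n,r}$. The only points I expect to require care are routine bookkeeping: checking that the parity and ceiling arithmetic make the inequalities $e(G')\le g(n-1,r-1)$ and $e(G_1)\le g(n-1,r)$ hold (they are in fact tight in the degree-$2$ reduction), treating the boundary value $r=3$ through Lemma \ref{2-n} rather than Lemma \ref{E-G-gfunodd}, and confirming that splitting off a $2$-degree vertex keeps $G_1$ connected and, after passing to its simple underlying graph, leaves $md$ unchanged so that Claim \ref{long} and Proposition \ref{parallel} genuinely apply. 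If one prefers to avoid the reduction, an alternative is to mimic the proof of Lemma \ref{E-G-gfunodd} verbatim, anchoring at $g(n,2)=\frac{3n}{2}-2$ and $g(n,\frac n2)=n$ from Lemma \ref{2-n}, using monotonicity (Lemma \ref{E-G-gleq}) together with the upper bounds, and ruling out a large drop $g(n,p-1)-g(n,p)\ge2$ by comparison with $H_{n,p-2}$ (or $H_{n-1}$ with a pendant edge when $p$ is small); but the reduction above is the shorter route.
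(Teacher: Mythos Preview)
Your proposal is correct and follows essentially the same route as the paper: both find a vertex $v$ of degree at most $2$, reduce to a connected graph on $n-1$ (odd) vertices by deleting $v$ or splitting off at $v$, and then invoke the odd case Lemma~\ref{E-G-gfunodd}. The only cosmetic difference is that in the pendant case the paper applies Lemma~\ref{E-G-gfunodd} directly at level $r$ (since $e(G')\le\frac{3n}{2}-r-1=\mu_{n-1,r}$ and $r\le\frac{n}{2}-1=\lfloor\frac{n-1}{2}\rfloor$), avoiding your separate treatment of $r=3$, but the arguments are otherwise identical.
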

\begin{proof}
Suppose $G$ is a graph with $e(G)\leq \frac{3n}{2}-r$.
Since $\frac{2e(G)}{n}<3$, there is a vertex $v$ with degree two or one. If $d_G(G)=1$, let $G'=G-v$, and then $md(G')=md(G)-1$;
if $d_G(v)=2$, then let $G'$ be a graph obtained from $G$ by splitting off the two edges incident with $v$.
By Claim \ref{long}, $md(G')\leq md(G)$. Therefore, $md(G')\leq md(G)$ and $e(G')=e(G)-1=\mu_{n-1,r}$ in both cases.
Since $r\leq \left\lfloor\frac{n}{2}\right\rfloor-1$, we also have $r\leq \left\lfloor\frac{n-1}{2}\right\rfloor$. 
Since $|G'|=n-1$ is odd and $e(G')=\mu_{n-1,r}$, we have $md(G')\geq r$. Therefore, $md(G)\geq r$.
\end{proof}

\begin{theorem}
For $n\geq 2$ and $1\leq r\leq n-1,$
$$ g(n,r)=\left\{
\begin{array}{lcl}
\frac{n(n-1)}{2}&  & r=1; \\
\left\lceil\frac{3}{2}(n-1)\right\rceil-1&  & r=2; \\
\frac{3n+1}{2}-r&  & n\geq 7\mbox{ is odd and }3\leq r\leq\left\lfloor\frac{n}{2}\right\rfloor; \\
\frac{3n}{2}-r&  & n\geq 6\mbox{ is even  and }3\leq r\leq\left\lfloor\frac{n}{2}\right\rfloor; \\
n-1 &   &\left\lfloor\frac{n}{2}\right\rfloor+1\leq r\leq n-1,
\end{array}
\right. $$
\end{theorem}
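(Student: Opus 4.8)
The plan is to assemble the formula from the lemmas proved above by a case analysis on $r$, with a secondary split on the parity of $n$, so the main task is bookkeeping rather than a new argument. First I would record the closed forms of $\mu_{n,r}$: since $n-2r$ is even when $n$ is even and $n-2r+1$ is even when $n$ is odd, the ceiling in the definition of $\mu_{n,r}$ is vacuous, so $\mu_{n,r}=\frac{3n}{2}-r$ for even $n$ and $\mu_{n,r}=\frac{3n+1}{2}-r$ for odd $n$. This identifies the third and fourth branches of the statement with the already-computed values $g(n,r)=\mu_{n,r}$.

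Next I would dispatch the extreme values of $r$. For $r=1$, every connected graph has $md(G)\ge 1$ and, by Proposition \ref{parallel}, only simple graphs need be considered, so $g(n,1)=\binom{n}{2}=\frac{n(n-1)}{2}$. For $r=2$ and $n\ge 4$ the value $g(n,2)=\left\lceil\frac{3}{2}(n-1)\right\rceil-1$ is exactly Lemma \ref{2-n}; the remaining instance $n=3$ I would check directly, since $P_3$ is the only connected graph on three vertices with at most two edges and has $md=2$, while $K_3$ has three edges and $md=1$, so $g(3,2)=2$. For $\left\lfloor\frac{n}{2}\right\rfloor+1\le r\le n-1$, any connected graph with at most $n-1$ edges is a tree and hence has $md=n-1\ge r$ by Proposition \ref{block}, giving $g(n,r)\ge n-1$, while $C_n$ has $n$ edges and $md(C_n)=\left\lfloor\frac{n}{2}\right\rfloor<r$, giving $g(n,r)\le n-1$; hence $g(n,r)=n-1$.

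The remaining range $3\le r\le\left\lfloor\frac{n}{2}\right\rfloor$ I would handle by parity. When $n$ is odd this range is empty unless $n\ge 7$, and then Lemma \ref{E-G-gfunodd} gives $g(n,r)=\mu_{n,r}=\frac{3n+1}{2}-r$ directly. When $n$ is even the range is empty unless $n\ge 6$; for $r=n/2=\left\lfloor\frac{n}{2}\right\rfloor$ the second half of Lemma \ref{2-n} gives $g(n,n/2)=\mu_{n,n/2}=n=\frac{3n}{2}-\frac{n}{2}$, and for $3\le r\le n/2-1$ (nonempty only for $n\ge 8$) Lemma \ref{E-G-gfuneven} gives $g(n,r)=\mu_{n,r}=\frac{3n}{2}-r$; the leftover value $n=6$ has the range $3\le r\le\left\lfloor\frac{n}{2}\right\rfloor$ collapse to $r=3=n/2$, already covered.

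I do not expect a genuine obstacle, since all of the substance is in Lemmas \ref{2-n}, \ref{E-G-gfunodd} and \ref{E-G-gfuneven}. The points that need attention are organizational: lining up each lemma's hypotheses with the correct window of $(n,r)$ — in particular checking that the small values $n\in\{3,4,5,6\}$, for which some of these lemmas are not stated, fall into a branch settled by a different and elementary argument, chiefly the tree/cycle argument for $r\ge\left\lfloor\frac{n}{2}\right\rfloor+1$ — and the routine verification that $\frac{3n}{2}-r$ and $\frac{3n+1}{2}-r$ really equal $\mu_{n,r}$, which is the ceiling computation noted at the start. The closest thing to a pitfall is confirming that the branches of the displayed formula agree wherever more than one of them could apply (for example at $r=2$ against $r=\left\lfloor\frac{n}{2}\right\rfloor+1$ for small $n$), which I would settle by substitution.
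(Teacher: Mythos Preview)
Your proposal is correct and follows essentially the same route as the paper: both argue the formula case-by-case, invoking Lemma \ref{2-n} for $r=2$ and $r=\lfloor n/2\rfloor$, Lemma \ref{E-G-gfunodd} for odd $n$, Lemma \ref{E-G-gfuneven} for even $n$, and handling the extreme ranges $r=1$ and $r\ge\lfloor n/2\rfloor+1$ by elementary observations. Your version is slightly more meticulous about small $n$ and about unpacking $\mu_{n,r}$, but there is no substantive difference in the argument.
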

\begin{proof}
It is easy to verify that $g(n,1)={n\choose 2}$ and $g(n,r)=n-1$ when $n-1\geq r\geq \left\lfloor\frac{n}{2}\right\rfloor+1$.
By Lemma \ref{2-n}, $g(n,2)=\left\lceil\frac{3}{2}(n-1)\right\rceil-1$ when $n\geq 4$.

If $3\leq r\leq \left\lfloor\frac{n}{2}\right\rfloor$ and $n\geq 7$ is odd, then by Lemma \ref{E-G-gfunodd}, $g(n,r)=\frac{3n+1}{2}-r$.
If $3\leq r\leq \frac{n}{2}-1$ and $n\geq 8$ is even,
then by Lemma \ref{E-G-gfuneven}, $g(n,r)=\frac{3n}{2}-r$.
If $n\geq 6$ is even and $r=\frac{n}{2}$, then by Lemma \ref{2-n}, $g(n,r)=\frac{3n}{2}-r$.
\end{proof}

\section{Results for graph products}

Since an $MD$-coloring of a $4$-cycle is either trivial or assigning $1$ and $2$ alternately to its edges,
the opposite edges of a $4$-cycle are colored the same under its every $MD$-coloring.

\begin{theorem}
For two connected graphs $G$ and $H$, $md(G\Box H)=md(G)+md(H)$.
\end{theorem}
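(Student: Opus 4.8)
The plan is to prove the two inequalities $md(G\Box H)\ge md(G)+md(H)$ and $md(G\Box H)\le md(G)+md(H)$ separately. For the lower bound I would take extremal $MD$-colorings $\Gamma_G$ of $G$ and $\Gamma_H$ of $H$ on disjoint palettes, say $[md(G)]$ and $\{md(G)+1,\dots,md(G)+md(H)\}$, and lift them to $G\Box H$: color each ``vertical'' fiber edge $(u,v)(u',v)$ with $uu'\in E(G)$ by $\Gamma_G(uu')$, and each ``horizontal'' fiber edge $(u,v)(u,v')$ with $vv'\in E(H)$ by $\Gamma_H(vv')$. This uses exactly $md(G)+md(H)$ colors, so it suffices to check it is an $MD$-coloring. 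Given $(u_1,v_1)\ne(u_2,v_2)$ with, say, $u_1\ne u_2$, I would pick a color $i\in[md(G)]$ whose $i$-induced edge set $E_i^G$ in $G$ separates $u_1$ from $u_2$; the $i$-induced edge set of $G\Box H$ is then exactly the union over $v\in V(H)$ of the copies of the edges of $E_i^G$. Using the projection $\pi_G\colon G\Box H\to G$, $(u,v)\mapsto u$, which sends every surviving edge either to an edge of $G-E_i^G$ or to a single vertex, any $(u_1,v_1)$–$(u_2,v_2)$ path avoiding this set would project to a $u_1$–$u_2$ walk in $G-E_i^G$, a contradiction. The case $u_1=u_2,\ v_1\ne v_2$ is symmetric via $\pi_H$, so the lift is an $MD$-coloring and $md(G\Box H)\ge md(G)+md(H)$.

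For the upper bound I would start from an extremal $MD$-coloring $\Gamma$ of $G\Box H$ and show $|\Gamma|\le md(G)+md(H)$. The key step is the observation that for every $uu'\in E(G)$ and every $vv'\in E(H)$ the four vertices $(u,v),(u',v),(u',v'),(u,v')$ induce a $4$-cycle, and by the observation stated just before the theorem its two opposite vertical edges $(u,v)(u',v)$ and $(u,v')(u',v')$ receive the same color. Since $H$ is connected, this propagates so that all copies of a fixed edge $uu'$ of $G$ carry one common color, which I would call $\Gamma_G(uu')$; symmetrically one extracts a coloring $\Gamma_H$ of $H$ from the horizontal edges. Fixing $v_0\in V(H)$, the fiber over $v_0$ is a connected subgraph of $G\Box H$ isomorphic to $G$ on which $\Gamma$ restricts to a copy of $\Gamma_G$, so by Proposition \ref{key} the coloring $\Gamma_G$ is an $MD$-coloring of $G$ and hence uses at most $md(G)$ colors; likewise $\Gamma_H$ uses at most $md(H)$ colors. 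Since every edge of $G\Box H$ is vertical or horizontal, the palette of $\Gamma$ is contained in the union of the two palettes, giving $|\Gamma|\le md(G)+md(H)$, which combined with the lower bound yields the claimed equality.

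I expect the main obstacle to be precisely this ``factoring'' step in the upper bound — arguing that every copy of a given edge of $G$ (and of $H$) receives the same color — since this is the only place where both the $4$-cycle observation and the connectedness of the non-acting factor are genuinely used; once $\Gamma_G$ and $\Gamma_H$ are in hand, the argument collapses to Proposition \ref{key} together with the trivial bound $|\Gamma|\le|\Gamma_G|+|\Gamma_H|$. I would also keep the degenerate factors in mind (a single-edge factor such as $K_2$, or a single vertex), but under the paper's standing convention these are either handled verbatim by the general argument or excluded, so they cause no trouble.
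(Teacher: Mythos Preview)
Your proposal is correct and follows essentially the same approach as the paper: the upper bound is obtained exactly via the paper's Claim (all copies of a fixed factor-edge receive the same color, by propagating the $4$-cycle observation along a path in the connected cofactor) together with Proposition \ref{key} applied to a single fiber, and the lower bound is the same lifted coloring verified by projecting an arbitrary path to a walk in the appropriate factor. The only cosmetic difference is that you phrase the lower-bound verification via the projection $\pi_G$ while the paper writes out the corresponding walk in $H$, which is the symmetric formulation of the same argument.
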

\begin{proof}
Let $|G|=n_1$ and $|H|=n_2$. Let $V(G)=\{u_1,\cdots,u_{n_1}\}$ and $V(H)=\{v_1,\cdots,v_{n_2}\}$. For an edge $e=u_iu_j$ of $G$ and an edge $f=v_sv_t$ of $H$, let
$$S_e=\{((u_i,v_r),(u_j,v_r)):~r\in[n_2]\}\mbox{ and }S_f=\{((u_r,v_s),(u_r,v_t)):~r\in[n_1]\}.$$
It is obvious that every edge of $G\Box H$ is in a unique $S_e$, where $e$ is either in $E(G)$ or in $E(H)$. Therefore, $\bigcup_{e\in E(G)\cup E(H)}S_e=E(G\Box H)$.

Let $\Gamma$ be an extremal $MD$-coloring of $G\Box H$. Then we have the following result.
\begin{claim}\label{se}
$|\Gamma(S_e)|=1$ for every $e\in E(G)\cup E(H)$.
\end{claim}
\begin{proof}
Without loss of generality, let $e=u_1u_2$ be an edge of $G$. For any two edges $h_1=((u_1,v_i),(u_2,v_i))$ and $h_2=((u_1,v_j),(u_2,v_j))$ of $S_e$, there is a $v_iv_j$-path $P$ of $H$. W.l.o.g., let $v_i=v_1$ and $P=v_1f_1v_2f_2\cdots v_{j-1}f_{j-1}v_j$. Then $L=e\Box P$ is a subgraph of $G\Box H$. Because $e\Box f_r$ is a $4$-cycle for $r\in[j-1]$, and $((u_1,v_r),(u_2,v_r))$ and $((u_1,v_{r+1}),(u_2,v_{r+1}))$ are opposite edges of $e\Box f_r$, $((u_1,v_r),(u_2,v_r))$ and $((u_1,v_{r+1}),(u_2,v_{r+1}))$ are colored the same under $\Gamma$. Therefore, $h_1$ and $h_2$ are colored the same under $\Gamma$.
\end{proof}

Because $u_1\Box H$ and $G\Box v_1$ are subgraphs of $G\Box H$, by Proposition \ref{key}, $\Gamma$ is an $MD$-coloring restricted on $G\Box v_1$ and $u_1\Box H$. Since $G\cong G\Box v_1$ and $H\cong u_1\Box H$, $|\Gamma(G\Box v_1)|\leq md(G)$ and $|\Gamma(u_1\Box H)|\leq md(H)$.
Now we choose an edge $h$ of $G\Box H$ arbitrarily. Without loss of generality, suppose $h=((u_i,v_l),(u_j,v_l))$ (or $h=((u_r,v_s),(u_r,v_t))$). Then by Claim \ref{se}, there is an edge $e=((u_i,v_1),(u_j,v_1))$ of $G\Box v_1$ (or an edge $e=((u_1,v_s),(u_1,v_t))$ of $u_1\Box H$), such that $\Gamma(h)= \Gamma(e)$. This implies that $\Gamma(G\Box v_1)\cup \Gamma(u_1\Box v_1)= \Gamma$. Since $\Gamma$ is an extremal $MD$-coloring of $G\Box H$, $md(G\Box H)=|\Gamma|\leq md(G)+md(H)$.

We need to prove $md(G\Box H)\geq md(G)+md(H)$ below. Let $\Gamma_1$ be an extremal $MD$-coloring of $G$ and $\Gamma_2$ be an extremal $MD$-coloring of $H$ and $\Gamma_1\cap \Gamma_2=\emptyset$.
Since every edge $h$ of $G\Box H$ is in a unique $S_e$, where $e$ is either in $E(G)$ or $E(H)$, we construct an edge-coloring $\Gamma$ of $G\Box H$
such that $\Gamma(h)=\Gamma_1(e)$ when $e\in E(G)$ and $\Gamma(h)=\Gamma_2(e)$ when $e\in E(H)$. Since $|\Gamma|=|\Gamma_1|+|\Gamma_2|=md(G)+md(H)$,
in order to prove $md(G\Box H)\geq md(G)+md(H)$, we only need to prove that $\Gamma$ is an $MD$-coloring of $G\Box H$.

We need to prove that there is a monochromatic cut between any two different vertices of $G\Box H$. We set the two different vertices and denote them by $w_0=(u_i,v_s)$ and $w_r=(u_j,v_t)$, here either $u_i\neq u_j$
or $v_s\neq v_t$, say $v_s\neq v_t$. Since $\Gamma_2$ is an extremal $MD$-coloring of $H$, there is a monochromatic $u_sv_t$-cut of $H$, and we suppose that the color of the monochromatic $u_sv_t$-cut is $c$. If any $w_0w_r$-path of $G\Box H$ has an edge that is colored by $c$ under $\Gamma$, then the set of these edges is a monochromatic $w_0w_r$-cut of $G\Box H$ under $\Gamma$. We will show the existence below.

Let $P=w_0h_0w_1h_1\cdots w_{r-1}h_{r-1}w_r$ be a $w_0w_r$-path of $G\Box H$. Here $h_i=w_iw_{i+1}$ is an edge of $G\Box H$. For convenience, we denote $w_k$ by $(u_k,v_k)$ for $0\leq k\leq r$, and then $i=s=0$ and $j=t=r$. Because $h_k=w_kw_{k+1}=((u_k,v_k),(u_{k+1},v_{k+1}))$ is an edge of $G\Box H$, either $v_kv_{k+1}$ is an edge of $H$ or $v_k=v_{k+1}$.
Therefore, $L=v_sv_1\cdots v_{r-1}v_t$ is a $v_sv_t$-walk of $H$ (it may have $v_k=v_{k+1}$ for some $0\leq k\leq r-1$). Then $L$ contains a $v_sv_t$-path $L'$ of $H$. This implies that there is an edge of $L'$, which is also an edge of $L$, is colored by $c$. Suppose the edge is $e=v_lv_{l+1}$. Then $h_l=((u_l,v_l),(u_{l+1},v_{l+1}))$ is an edge of $P$ colored by $c$. This implies that any $w_0w_r$-path of $G\Box H$ has an edge that is colored by $c$ under $\Gamma$.

Since the $w_0w_r$-path $P$ is chosen arbitrarily, there is a
monochromatic $w_0w_r$-cut of $G\Box H$ under $\Gamma$, and since the vertices $w_0$ and $w_r$ are chosen arbitrarily, $\Gamma$ is an $MD$-coloring of $G\Box H$.
\end{proof}

Because any three graphs $G_1, G_2$ and $G_3$ satisfy $G_1\Box G_2\Box G_3=(G_1\Box G_2)\Box G_3$, the following result is obvious.
\begin{corollary}
For $k$ connected graphs $G_1,\cdots,G_k$, $md(G_1\Box\cdots\Box G_k)=\sum_{i\in[k]}md(G_i)$.
\end{corollary}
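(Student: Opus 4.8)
The plan is to induct on $k$ and reduce everything to the binary case $md(G\Box H)=md(G)+md(H)$ established in the preceding theorem. For $k=1$ there is nothing to prove, and for $k=2$ the statement is exactly that theorem. So assume the identity holds for any $k-1$ connected graphs, and set $G=G_1\Box\cdots\Box G_{k-1}$.

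The first thing I would check is that $G$ is again a connected graph, so that the binary theorem is applicable to $G\Box G_k$. The Cartesian product of two connected graphs is connected (this is a standard fact, and is in any case implicitly used in the proof of the binary theorem, since there one moves between fibres along paths in $G$ and in $H$); an obvious induction then shows that a finite Cartesian product of connected graphs is connected. Next, using that $\Box$ is associative — the remark made just before the corollary — one has $G_1\Box\cdots\Box G_k=(G_1\Box\cdots\Box G_{k-1})\Box G_k=G\Box G_k$, with both $G$ and $G_k$ connected. Applying the binary theorem gives $md(G_1\Box\cdots\Box G_k)=md(G)+md(G_k)$, and applying the induction hypothesis to $G$ gives $md(G)=\sum_{i\in[k-1]}md(G_i)$. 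Combining these two equalities yields $md(G_1\Box\cdots\Box G_k)=\sum_{i\in[k]}md(G_i)$, which closes the induction.

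There is no real obstacle here: the entire substance of the result is carried by the binary theorem, and the only auxiliary ingredients are the associativity of $\Box$ and the preservation of connectivity under $\Box$, both of which are routine. The proof is therefore short, and the main thing to state carefully is simply the reduction $(G_1\Box\cdots\Box G_{k-1})\Box G_k=G_1\Box\cdots\Box G_k$ that legitimizes peeling off one factor at a time.
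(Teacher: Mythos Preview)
Your proof is correct and matches the paper's approach: the paper simply remarks that associativity of $\Box$ makes the corollary obvious from the binary theorem, and your induction spells this out with the additional (necessary) observation that connectivity is preserved under $\Box$.
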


\begin{lemma}\label{2p}
If $m\geq 1$ and $n\geq 1$, then $P_m\boxtimes P_n$ is a closure.
\end{lemma}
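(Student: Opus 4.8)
The plan is to exhibit $P_m\boxtimes P_n$ as a union of copies of $K_4$ glued along edges, and then to observe that such a union is automatically a closure because $K_4$ is built from triangles sharing edges.

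First I would fix coordinates: since $P_t$ has $t$ edges, write $V(P_m\boxtimes P_n)=\{(i,j):0\le i\le m,\ 0\le j\le n\}$. For each $0\le i\le m-1$ and $0\le j\le n-1$, a direct check from the definition of the strong product shows that the four vertices $(i,j),(i+1,j),(i,j+1),(i+1,j+1)$ are pairwise adjacent, so they induce a copy of $K_4$; call it the cell $C_{i,j}$. Its six edges are the two horizontal edges, the two vertical edges, and the two diagonals of the unit square with corner $(i,j)$. A quick inspection then shows every edge of $P_m\boxtimes P_n$ lies in at least one cell: a horizontal or vertical edge is a side of a cell (one exists because $m\ge 1$ and $n\ge 1$), and a diagonal edge is a diagonal of its cell. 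This is the only place the hypotheses $m\ge1$, $n\ge1$ are used.

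Next I would record two gluing facts. (a) Each cell $C_{i,j}\cong K_4$ is a closure: any two of its edges are joined by a short chain of triangles sharing an edge — if the two edges meet, one triangle contains both; if they are the opposite edges $ab$ and $cd$, the triangles $abc$ and $bcd$ contain $ab$ and $cd$ respectively and share $bc$. Hence any two edges of a single cell satisfy $\theta$. (b) If two cells share a common edge $f$ of $P_m\boxtimes P_n$, then every edge $e$ of one is $\theta$-related to every edge $e'$ of the other: concatenate a triangle-chain inside the first cell from $e$ to $f$ with a triangle-chain inside the second cell from $f$ to $e'$. Horizontally adjacent cells $C_{i,j}$ and $C_{i+1,j}$ share the vertical edge $(i+1,j)(i+1,j+1)$, and vertically adjacent cells $C_{i,j}$ and $C_{i,j+1}$ share the horizontal edge $(i,j+1)(i+1,j+1)$, so the ``cell adjacency graph'' on the $C_{i,j}$ contains an $m\times n$ grid and is connected.

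Finally I would combine these. Given arbitrary edges $e,e'$ of $P_m\boxtimes P_n$, pick cells $C$ and $C'$ containing them, take a path $C=C_0,C_1,\dots,C_k=C'$ of successively edge-sharing cells, and chain the $\theta$-relations through the shared edges from $e$ to $e'$; hence $e\,\theta\,e'$. Therefore $P_m\boxtimes P_n$ is a closure. The argument is essentially bookkeeping, and the only mildly delicate points are the routine verification that each unit square induces a $K_4$ and that no edge — in particular no diagonal edge — fails to lie in some cell.
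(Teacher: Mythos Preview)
Your argument is correct. You explicitly tile $P_m\boxtimes P_n$ by the $K_4$-cells $C_{i,j}$, verify every edge lies in some cell, show each $K_4$ is a closure via two triangles, and chain through edge-sharing adjacent cells along a path in the connected $m\times n$ cell grid.

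The paper proves the same lemma by induction on $m+n$: it peels off the last strip, writing $P_m\boxtimes P_n=(P_{m-1}\boxtimes P_n)\cup(e_m\boxtimes P_n)$, notes both pieces are closures by the inductive hypothesis, and observes they share the edge $((u_{m-1},v_0),(u_{m-1},v_1))$. Unwinding that induction yields exactly your cell decomposition, so the two proofs are the same idea packaged differently. Your direct version is arguably cleaner: it makes the covering and the gluing visible at once and isolates the one nontrivial check (that no edge, in particular no boundary horizontal/vertical edge, escapes all cells, which is where $m\ge1$ and $n\ge1$ enter). The paper's inductive version is terser but leaves implicit both the fact that the two pieces really cover all edges of $P_m\boxtimes P_n$ and the elementary lemma that two closures sharing an edge have a closure as their union.
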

\begin{proof}
The proof is by induction on $m+n$. It is easy to verify that $P_1\boxtimes P_1=K_4$, and so the result holds for $m+n=2$. Suppose $m+n>2$ and $m\geq 2$.
Let $P_m=u_0e_1u_1e_2\cdots u_{m-1}e_mu_m$ and $P_n=v_0f_1v_1f_2\cdots v_{n-1}f_nv_n$. Let $P'=P_m-e_m$, and by induction, both $P'\boxtimes P_n$ and $e_m\boxtimes P_n$ are closures.
Since $h=((u_{m-1},v_0),(u_{m-1},v_1))$ is a common edge of $P'\boxtimes P_n$ and $e_m\boxtimes P_n$, $P_m\boxtimes P_n$ is a closure.
\end{proof}

\begin{theorem}\label{K4}
For two connected graphs $G$ and $H$ with $|G|\geq 2$ and $|H|\geq 2$, $md(G\boxtimes H)=1$.
\end{theorem}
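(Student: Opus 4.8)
The plan is to prove the stronger statement that $G\boxtimes H$ is a \emph{closure}, and then conclude $md(G\boxtimes H)=1$ by Lemma~\ref{closure}. I would argue by induction on $|G|+|H|$. The base case is $|G|=|H|=2$, where $G\boxtimes H=K_2\boxtimes K_2=K_4$; since any two edges of $K_4$ either lie on a common triangle or are joined through two triangles sharing an edge, $K_4$ is a closure. (This is the content of Lemma~\ref{2p} in the special case $m=n=1$.)

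For the inductive step, suppose $|G|+|H|>4$, so without loss of generality $|G|\ge 3$. Pick a non-cut-vertex $u$ of $G$ (every connected graph on at least two vertices has one), so that $G':=G-u$ is connected with $|G'|\ge 2$; by the induction hypothesis $G'\boxtimes H$ is a closure, and it is a subgraph of $G\boxtimes H$. Since $\theta$ is transitive (two $\theta$-chains meeting at a common edge concatenate), it suffices to show that every ``new'' edge of $G\boxtimes H$, i.e. every edge incident with a vertex of the form $(u,v)$, satisfies $\theta$ with some edge of $G'\boxtimes H$; all edges of $G\boxtimes H$ are then forced into one $\theta$-class.

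Now classify such a new edge by the three adjacency rules of the strong product. If it is $((u,v),(u,v'))$ with $vv'\in E(H)$, fix a neighbor $u'$ of $u$ in $G$ (so $u'\in V(G')$); the four vertices $\{u,u'\}\times\{v,v'\}$ span a $K_4$ in $G\boxtimes H$ containing this edge together with $((u',v),(u',v'))\in E(G'\boxtimes H)$. If it is $((u,v),(w,v))$ with $uw\in E(G)$ (forcing $w\in V(G')$), fix a neighbor $v'$ of $v$ in $H$ (possible since $H$ is connected and $|H|\ge 2$); then $\{u,w\}\times\{v,v'\}$ spans a $K_4$ containing this edge together with $((w,v),(w,v'))\in E(G'\boxtimes H)$. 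The diagonal case $((u,v),(w,v'))$ with $uw\in E(G)$ and $vv'\in E(H)$ is handled by the very same $K_4$ on $\{u,w\}\times\{v,v'\}$, which again contains $((w,v),(w,v'))$. Since all six edges of a $K_4$ lie in one $\theta$-class, each new edge is $\theta$-related to an edge of the closure $G'\boxtimes H$, which completes the induction; Lemma~\ref{closure} then yields $md(G\boxtimes H)=1$.

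There is essentially no hard step here: the whole proof reduces to checking that the four relevant vertices of the strong product always induce a $K_4$ and that one of its edges already lies in $G'\boxtimes H$. The only places needing a little care are the existence of a non-cut-vertex of $G$ (so that $G'$ stays connected and the induction hypothesis applies) and the use of $|G|,|H|\ge 2$ to guarantee the auxiliary neighbors $u'$ and $v'$ exist. An alternative route would be to pass to spanning trees $T_G,T_H$, note that $T_G\boxtimes T_H$ is a connected spanning subgraph of $G\boxtimes H$ and invoke Lemma~\ref{sub}, then build $T_G\boxtimes T_H$ from overlapping copies of $P_m\boxtimes P_n$ and apply Lemma~\ref{2p}; but the direct closure induction above is shorter.
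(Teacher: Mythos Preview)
Your proof is correct, and the route is genuinely different from the paper's. The paper argues directly that any two edges $h_1,h_2$ of $G\boxtimes H$ lie in a common subgraph of the form $P'\boxtimes P''$ (with $P',P''$ paths in $G,H$ joining the projections of $h_1,h_2$), and then invokes Lemma~\ref{2p} to conclude $h_1\theta h_2$; a degenerate case where one projection path collapses to a vertex is handled separately. Your argument instead peels off one non-cut-vertex at a time and links each new edge to the inductively known closure $G'\boxtimes H$ through a single $K_4$, so you never need Lemma~\ref{2p} beyond the trivial observation that $K_4$ is a closure. What your approach buys is self-containment and a very short inductive step; what the paper's approach buys is a uniform, non-inductive statement (any pair of edges sits inside an explicit path-product closure), which is perhaps more informative structurally. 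The ``alternative route'' you sketch at the end---spanning trees plus overlapping $P_m\boxtimes P_n$'s---is in fact closer in spirit to what the paper actually does.
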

\begin{proof}
By Lemma \ref{closure}, if we prove $G\boxtimes H$ is a closure, then we are done.
Let $h_1=((x_1,y_1),(x_2,y_2))$ and $h_2=((a_1,b_1),(a_2,b_2))$ be two distinct edges of $G\boxtimes H$.
Let $e_1=x_1x_2$, $e_2=a_1a_2$, $f_1=y_1y_2$ and $f_2=b_1b_2$. Then $e_i$ (or $f_i$) is either an edge or a
vertex of $G$ (or $H$) for $i=1,2$. Therefore, there is a path $P'$ of $G$ connects $e_1$ and $e_2$, that is,
$e_1$ is either a pendent edge of $P'$ if $e_1$ is an edge, or a pendent vertex of $P'$ if $e_1$ is a vertex,
and so is $e_2$. Analogously, there is a path $P''$ of $H$ connects $f_1$ and $f_2$.
Furthermore, at least one of $e_1$ and $f_1$ is an edge, and at least one of $e_2$ and $f_2$ is an edge.

{\em Case 1.} None of $P'$ and $P''$ is a single vertex.

Since at least one of $e_1$ and $f_1$ is an edge, and at least one of $e_2$ and $f_2$ is an edge, without loss of generality,
we assume $e_1$ and $f_2$ are edges. Then $h_1\in E(e_1\boxtimes f_1)$ and $h_2\in E(e_2\boxtimes f_2)$.
Since both $e_1\boxtimes f_1$ and $e_2\boxtimes f_2$ are subgraphs of $P'\boxtimes P''$, both $h_1$ and $h_2$ are in $P'\boxtimes P''$.
By Lemma \ref{2p}, $P'\boxtimes P''$ is a closure, and then $h_1\theta h_2$ is in $P'\boxtimes P''$. Therefore, $h_1\theta h_2$ is also in $G\boxtimes H$.

{\em Case 2.} One of $P'$ and $P''$ is a single vertex, say $P'$.

Since at least one of $e_1$ and $f_1$ is an edge, and at least one of $e_2$ and $f_2$ is an edge, and since
$e_1=e_2$ is a vertex of $G$, both $f_1$ and $f_2$ are edges of $H$. Since $|G|\geq2$, there is an edge of $G$, say $e$,
incident with $e_1$. It is easy to verify that both $h_1$ and $h_2$ are in $e\boxtimes P''$. Since $e\boxtimes P''$ is a closure by Lemma \ref{2p},
$h_1\theta h_2$ in $e\boxtimes P''$. Therefore, $h_1\theta h_2$ is also in $G\boxtimes H$.
\end{proof}

Because $G\boxtimes H$ is a connected spanning subgraph of $G\circ H$ by Proposition \ref{S-L}, by Lemma \ref{sub}, the following result is obvious.

\begin{theorem}
If $G$ and $H$ are connected graphs with $|G|\geq 2$ and $|H|\geq 2$, then $md(G\circ H)=1$.
\end{theorem}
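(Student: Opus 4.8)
The plan is to obtain this as an immediate consequence of the two results on products already in hand, with no new work required. First I would recall from Proposition \ref{S-L}(1) that $G\boxtimes H$ is a connected spanning subgraph of $G\circ H$: the two graphs share the vertex set $V(G)\times V(H)$, and every edge of the strong product (arising from an edge of $G$, an edge of $H$, or a pair of edges, one in each factor) is also an edge of the lexicographic product, so $E(G\boxtimes H)\subseteq E(G\circ H)$ while $G\boxtimes H$ is already connected since $G$ and $H$ are.

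Next I would apply Lemma \ref{sub} with the connected spanning subgraph $G\boxtimes H$ of $G\circ H$, which yields $md(G\boxtimes H)\geq md(G\circ H)$. Then I would invoke Theorem \ref{K4}, which gives $md(G\boxtimes H)=1$ because $|G|\geq 2$ and $|H|\geq 2$. Combining these, $md(G\circ H)\leq md(G\boxtimes H)=1$. Since $G\circ H$ is connected (it contains the connected spanning subgraph $G\boxtimes H$) and every connected graph satisfies $md\geq 1$ by the remark following Lemma \ref{sub}, we conclude $md(G\circ H)=1$.

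There is essentially no obstacle here: the entire content has been pushed into Theorem \ref{K4} (that the strong product of two nontrivial connected graphs is a closure, hence has $md=1$) and into Proposition \ref{S-L}. The only point one might want to spell out, if aiming for a self-contained argument, is the spanning-subgraph claim itself, namely that the additional lexicographic adjacencies $(u,v)\sim(u',v')$ with $uu'\in E(G)$ and $v\ne v'$ do not affect the inclusion $E(G\boxtimes H)\subseteq E(G\circ H)$; but this is direct from the definitions and the common vertex set, so the whole proof reduces to a short citation chain.
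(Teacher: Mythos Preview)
Your proposal is correct and follows exactly the same route as the paper: cite Proposition~\ref{S-L}(1) to get $G\boxtimes H$ as a connected spanning subgraph of $G\circ H$, apply Lemma~\ref{sub} to obtain $md(G\circ H)\le md(G\boxtimes H)$, and finish with Theorem~\ref{K4}. The paper states this in one sentence before the theorem rather than writing a formal proof, but the content is identical.
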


\begin{lemma}\label{KP}
$md(K_2\ast K_n)=md(P_3\ast K_3)=1$ where $n\geq 5$.
\end{lemma}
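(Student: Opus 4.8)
The plan is to show that both graphs $K_2\ast K_n$ (for $n\ge 5$) and $P_3\ast K_3$ are closures, and then invoke Lemma~\ref{closure}. I would first analyze the structure of a tensor product $G\ast K_m$: two vertices $(u,v)$ and $(u',v')$ are adjacent exactly when $uu'\in E(G)$ and $v\ne v'$. So the ``layers'' indexed by $V(G)$ interact through complete-bipartite-like patterns between adjacent classes, but with the diagonal removed. The key observation I would establish is that many small subgraphs of these products are triangles or copies of $K_{2,3}$, so the relation $\theta$ propagates across all edges.

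For $K_2\ast K_n$ with $n\ge 5$: writing $V(K_2)=\{a,b\}$ and $V(K_n)=\{1,\dots,n\}$, every edge joins $(a,i)$ to $(b,j)$ with $i\ne j$; the graph is bipartite with parts $\{a\}\times[n]$ and $\{b\}\times[n]$, so there are no triangles, and I must rely entirely on $K_{2,3}$'s. I would verify that for any two edges $(a,i)(b,j)$ and $(a,i')(b,j')$ one can find a short chain of $K_{2,3}$-subgraphs linking them: e.g.\ the three vertices $(b,j_1),(b,j_2),(b,j_3)$ (with $j_1,j_2,j_3$ distinct) together with two vertices $(a,i_1),(a,i_2)$ where $i_1,i_2\notin\{j_1,j_2,j_3\}$ form a $K_{2,3}$ — this is where $n\ge 5$ is needed, to guarantee enough indices avoiding a forbidden triple. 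Chaining such $K_{2,3}$'s (sharing an edge each time) connects any edge to any other, so $K_2\ast K_n$ is a closure.

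For $P_3\ast K_3$: here $V(P_3)=\{x_0,x_1,x_2\}$ with edges $x_0x_1,x_1x_2$, and $V(K_3)=\{1,2,3\}$. The product has vertex classes over $x_0,x_1,x_2$; edges run only between the $x_0$-class and the $x_1$-class, and between the $x_1$-class and the $x_2$-class, each time omitting the diagonal (a perfect matching), so between consecutive classes we get $K_{3,3}$ minus a perfect matching, which is a $6$-cycle. This small graph is bipartite too, so again I would look for $K_{2,3}$-subgraphs: with only $3$ colors per class the diagonal restriction is tight, so I expect to combine a $K_{2,3}$ spanning the $x_0$-- and $x_1$-classes with one spanning the $x_1$-- and $x_2$-classes, sharing an edge incident to the middle class, to link all edges. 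The main obstacle will be the $P_3\ast K_3$ case: because $n=3$ is so small the diagonal deletions leave little room, and I must carefully check that enough $K_{2,3}$'s actually occur and that they chain together to cover all $12$ edges; once that combinatorial check is done, Lemma~\ref{closure} finishes the proof.
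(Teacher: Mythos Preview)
Your approach for $K_2\ast K_n$ ($n\ge 5$) is essentially the same as the paper's: identify the graph with the bipartite ``crown'' on $2n$ vertices and verify that any two edges are $\theta$-related through a short chain of $K_{2,3}$'s, using $n\ge 5$ to guarantee enough free indices. That part is fine.

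The $P_3\ast K_3$ part, however, has a genuine gap. First, a notational slip: in this paper $P_t$ denotes a path with $t$ \emph{edges}, so $P_3$ has four vertices, not three; the graph in question has $12$ vertices and $18$ edges. More importantly, your closure strategy cannot succeed for this graph regardless of which path you mean. The tensor product $P_3\ast K_3$ is bipartite (since $P_3$ is), so it contains no triangles; and a direct check shows that no two vertices have three common neighbours. Indeed, for two vertices $(y_a,i),(y_a,j)$ in the same path-fibre with $i\ne j$, a common neighbour must lie in an adjacent fibre and carry the unique label $k\in[3]\setminus\{i,j\}$, giving at most two common neighbours (one per adjacent fibre); vertices on opposite sides of the bipartition have no common neighbours at all. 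Hence $P_3\ast K_3$ contains \emph{no} copy of $K_{2,3}$, so it is not a closure and Lemma~\ref{closure} is inapplicable. The ``combinatorial check'' you defer will come up empty.

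The paper handles $P_3\ast K_3$ by an entirely different, ad hoc argument: it singles out a central $6$-cycle $C$, observes that every other edge lies opposite an edge of $C$ in some $4$-cycle (so $\Gamma(G)=\Gamma(C)$ for any $MD$-colouring $\Gamma$), reduces via Lemma~\ref{leqk} to ruling out a $2$-edge-colouring, enumerates the finitely many $2$-colourings of $C$ up to symmetry, and for each one exhibits a cycle on which the induced colouring fails to be an $MD$-colouring. You will need some direct argument of this kind; the closure machinery is simply too coarse here.
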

\begin{proof}
We first show that $md(K_2\ast K_n)=1$ for $n\geq5$. Let $V(K_2)=\{x_1,x_2\}$ and $V(K_n)=\{y_1,\cdots,y_n\}$. We construct a bipartite graph $G_{2,n}$ with bipartition
$S_1=\{v_1^1,v_2^1,\cdots,v_n^1\}$ and $S_2=\{v_1^2,v_2^2,\cdots,v_n^2\}$, and $v_i^s$ connects $v_j^t$ if and only if $i\neq j$ and $s\neq t$. Then $K_2\ast K_n\cong G_{2,n}$,
this is because there is a bijection $f$ between $V(K_2)\times V(K_n)$ and $V(G_{2,n})$, such that $f(x_i,y_j)=v_j^i$,
and then $((x_i,y_j),(x_s,y_t))$ is an edge of $K_2\ast K_n$ if and only if $v_j^iv_t^s$ is an edge of $G_{2,n}$.
Therefore, by Lemma \ref{closure}, we only need to prove that $G_{2,n}$ is a closure when $n\geq5$.

Let $e=v_i^1v_j^2$ and $f=v_s^1v_t^2$ be two edges of $G_{2,n}$. Then $i\neq j$ and $s\neq t$. Let $A=\{i,j,s,t\}$.

If $|A|=4$, since $n\geq5$, there is an integer $w\in[n]$ such that $w\notin A$. Then $i,j,s,t,w$ are pairwise different, and so $G_1=G_{2,n}[v_i^1,v_j^2,v_s^1,v_t^2,v_w^2]\cong K_{2,3}$.
Therefore, $e\theta f$.

If $|A|=3$, then if $e$ and $f$ have no common vertex, for convenience, let $i=t=1$, $j=2$ and $s=3$. Then $G_1=G_{2,n}[v_i^1,v_j^2,v_3^1, v_4^2,v_5^1]\cong K_{2,3}$
and $G_2=G_{2,n}[v_s^1,v_t^2,v_2^1, v_4^2,v_5^1]\cong K_{2,3}$. Since $e\in E(G_1)$, $f\in E(G_2)$ and $v_4^2v_5^1\in E(G_1)\cap E(G_2)$, $e\theta f$.
If $e$ and $f$ have a common vertex, for convenience, let $i=s=1$, $j=2$ and $t=3$. Then $G'_1=G_{2,n}[v_i^1=v_s^1,v_j^2,v_t^2, v_4^1,v_5^1]\cong K_{2,3}$
and both $e$ and $f$ are in $G'_1$, $e\theta f$.

If $|A|=2$, then $e$ and $f$ are two non-adjacent edges. Let $i=t=1$ and $j=s=2$ for convenience.
Then $G_1=G_{2,n}[v_i^1,v_j^2,v_4^1, v_5^1,v_3^2]\cong K_{2,3}$ and $G_2=G_{2,n}[v_s^1,v_t^2,v_4^1, v_5^1,v_3^2]\cong K_{2,3}$.
Since $e\in E(G_1)$, $f\in E(G_2)$ and $v_5^1v_3^2\in E(G_1)\cap E(G_2)$, $e\theta f$.

Now we prove $md(P_3\ast K_3)=1$. The graphs $P_3,K_3$ and $P_3\ast K_3$ are shown as on the left-hand-side of Figure \ref{11},
and we write the vertex $(y_i,x_j)$ of $P_3\ast K_3$ as $v_i^j$. The planar embedding of $G=P_3\ast K_3$ is shown as on the right-hand-site of Figure \ref{11}.
We will complete the proof by checking all the possible edge-colorings of $P_3\ast K_3$.
\begin{figure}[h]
    \centering
    \includegraphics[width=270pt]{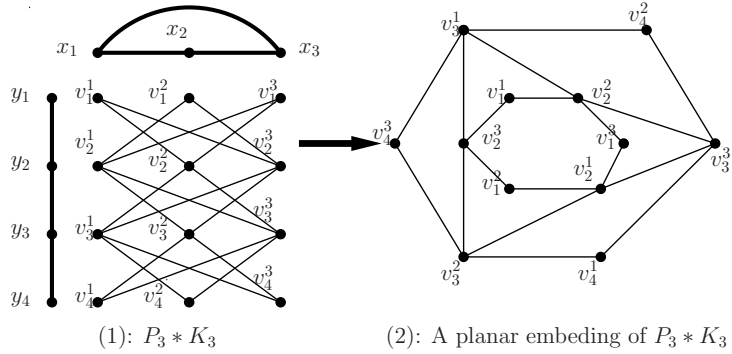}\\
    \caption{The graph $P_3\ast K_3$.} \label{11}
\end{figure}

The central cycle $C=G[v_1^1,v_2^2,v_1^3,v_2^1,v_1^2,v_2^3]$ of $G$ is crucial for our discussion. Since the opposite edges of $C_4$ are colored the same under its any $MD$-coloring,
$\Gamma(G)=\Gamma(C)$ for any $MD$-coloring of $G$. If $md(G)\geq 2$, by Lemma \ref{leqk}, there is an $MD$-coloring $\Gamma'$ of $G$ such that $|\Gamma'|=2$.
All possible edge-colorings of $C$ under $\Gamma'$ are shown as in Figure \ref{22} $A,B,C$ and $D$, and the colors of the other edges are also labeled.
If $\Gamma'$ is an edge-coloring shown as in Figure \ref{22} $A$, then $\Gamma'$ is not an $MD$-coloring restricted on the cycle $C_1=G[v_4^3,v_3^2,v_2^1,v_1^3,v_2^2,v_3^1]$;
if $\Gamma'$ is an edge-coloring shown as in Figure \ref{22} $B$, $C$ or $D$, then $\Gamma'$ is not an $MD$-coloring restricted on the cycle $C_2=G[v_2^3,v_3^2,v_4^1,v_3^3,v_2^2,v_3^1]$.
All the four cases contradict that $\Gamma'$ is an $MD$-coloring of $G$, and so $md(G)=1$.
\end{proof}

\begin{figure}[h]
    \centering
    \includegraphics[width=270pt]{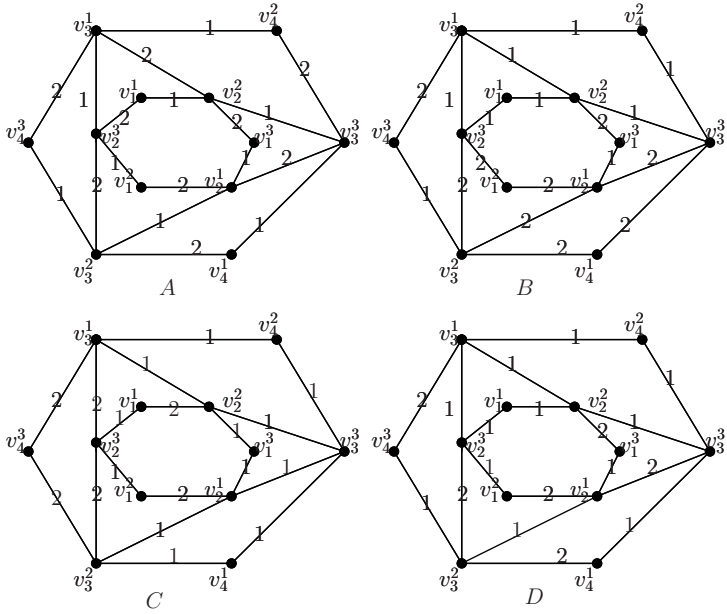}\\
    \caption{All possible $2$-edge-coloring of $P_3\ast K_3$.} \label{22}
\end{figure}

\begin{lemma}\label{GHK3}
Let $G$ and $H$ be two connected graphs and let $G'$ be a connected subgraph of $G$. If at least one of $G'$ and $H$ is non-bipartite graph and $\delta(H)\geq 2$, then $md(G\ast H)\leq md(G'\ast H)$.
\end{lemma}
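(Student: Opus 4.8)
The plan is to sandwich $md(G\ast H)$ between $md(G'\ast H)$ from above by passing through the \emph{induced} tensor product $G''\ast H$, where $G'':=G[V(G')]$. Since $G'$ is a connected subgraph of $G$, the graph $G''$ is connected and $E(G')\subseteq E(G'')$ with $V(G')=V(G'')$. An odd cycle of $G'$ is an odd cycle of $G''\subseteq G$, so the hypothesis that one of $G',H$ is non-bipartite guarantees that one of $G'',H$, and one of $G,H$, is non-bipartite; hence by Proposition~\ref{Te} each of $G\ast H$, $G''\ast H$, $G'\ast H$ is connected. Consequently $G'\ast H$ is a connected spanning subgraph of $G''\ast H$, and Lemma~\ref{sub} gives $md(G''\ast H)\le md(G'\ast H)$. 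It then remains to show $md(G\ast H)\le md(G''\ast H)$, and the key identity is that $(G\ast H)-\big((V(G)\setminus V(G'))\times V(H)\big)=G''\ast H$; so it suffices to exhibit $(V(G)\setminus V(G'))\times V(H)$ as a soft-layer of $G\ast H$ and invoke Lemma~\ref{sequ}.

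To build the soft-layer, write $U=V(G)\setminus V(G')$. First I would order $U$ as $u_1,\dots,u_k$ so that $G_j:=G-\{u_1,\dots,u_j\}$ stays connected for every $j$ (each $u_j$ a non-cut-vertex of $G_{j-1}$); this is possible because one may take a spanning tree $T$ of $G$ containing a spanning tree of $G'$ and repeatedly remove a leaf of the current tree that lies in $U$, such a leaf existing as long as vertices of $U$ remain (contract the $G'$-subtree to a point and observe the resulting tree has a leaf other than that point). Then order $U\times V(H)$ by columns: all vertices $(u_1,h)$, $h\in V(H)$, in an arbitrary order, then all $(u_2,h)$, and so on. When $(u_j,h)$ is about to be deleted, the graph built so far is $M:=(G_{j-1}\ast H)-\{(u_j,h'):h'\text{ precedes }h\}$. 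Because a column of $G\ast H$ is an independent set, every neighbour of $(u_j,h)$ in $G\ast H$ lies outside column $u_j$, so no earlier vertex of column $u_j$ was a neighbour, and the surviving neighbours of $(u_j,h)$ in $M$ are exactly the $(u',h'')$ with $u'\in N_{G_{j-1}}(u_j)$ and $h''\in N_H(h)$. Thus $d_M((u_j,h))=d_{G_{j-1}}(u_j)\cdot d_H(h)\ge 1\cdot\delta(H)\ge 2$, since $G_{j-1}$ is connected on at least two vertices; this is exactly where the hypothesis $\delta(H)\ge2$ enters.

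For the connectivity condition, note that $M-(u_j,h)$ consists of the graph $G_j\ast H=(G_{j-1}\ast H)-(\{u_j\}\times V(H))$ — which is connected by Proposition~\ref{Te} because one of $G_j\supseteq G'$ and $H$ is non-bipartite — together with the not-yet-deleted vertices of column $u_j$, each joined to $G_j\ast H$ by its $\ge 2$ surviving neighbours (lying in columns indexed by $N_{G_{j-1}}(u_j)\subseteq V(G_j)$, hence present); so $M-(u_j,h)$ is connected. Therefore the column ordering is a soft-layer of $G\ast H$, Lemma~\ref{sequ} yields $md(G\ast H)\le md\big((G\ast H)-(U\times V(H))\big)=md(G''\ast H)$, and combining with the first paragraph gives $md(G\ast H)\le md(G''\ast H)\le md(G'\ast H)$. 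The main obstacle is the simultaneous verification of the two soft-layer conditions: one must order $U$ so that the "base" $G_j\ast H$ stays connected at every stage (which is precisely where Proposition~\ref{Te} and the non-bipartiteness hypothesis are needed) while keeping each deleted vertex of degree at least $2$ (which is precisely where $\delta(H)\ge2$ is needed); that both can be arranged by the column-by-column order above is the crux.
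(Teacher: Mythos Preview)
Your proposal is correct and follows essentially the same route as the paper. The paper argues by induction on $|G|-|G'|$: the base case $|G|=|G'|$ is exactly your spanning-subgraph step (your passage through $G''=G[V(G')]$), and the inductive step removes a single leaf $u\in V(G)\setminus V(G')$ of a spanning tree, shows that the column $\{u\}\times V(H)$ is a soft-layer of $G\ast H$, and applies Lemma~\ref{sequ}. Your argument simply unrolls this induction into one long soft-layer over all of $U\times V(H)$, ordered column by column; you are in fact more careful than the paper in explicitly verifying the connectivity condition at each intermediate deletion (the paper asserts ``$\gamma$ is a soft-layer'' without spelling this out).
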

\begin{proof}
We proceed the proof by induction on $|G|-|G'|$. If $|G|-|G'|=0$, then $G'$ is a spanning subgraph of $G$. This implies that $G'\ast H$ is a spanning subgraph of $G\ast H$.
Since at least one of $G'$ and $H$ is not bipartite, by Proposition \ref{Te}, both of $G\ast H$ and $G'\ast H$ are connected graphs. Then by Lemma \ref{sub}, $md(G\ast H)\leq md(G'\ast H)$,
and the result thus holds.

Now we suppose $|G|-|G'|\geq 1$. Since $G'$ is a connected subgraph of $G$, there is a spanning tree of $G$ such that one of its leaves, say $u$, is not in $V(G')$. Let $G^*=G-u$. Then $G^*$ is a connected subgraph of $G$ containing $G'$ as its subgraph. Furthermore, both of $G\ast H$ and $G^*\ast H$ are connected by Proposition \ref{Te}. Since $|G^*|-|G'|<|G|-|G'|$, by induction, $md(G^*\ast H)\leq md(G'\ast H)$.

Let $V(H)=\{w_1,w_2,\cdots,w_n\}$ and let $S=\{(u,w_i):i\in [n]\}$. Then $S$ is an independent set of $G\ast H$. Furthermore, $G\ast H-S=G^*\ast H$. For an element $(u,w)$ of $S$, since $\delta(H)\geq 2$,
there are two neighbors of $w$ in $H$, say $w_1$ and $w_2$. Let $v$ be a neighbor of $u$ in $G$. Then $((u,w),(v,w_1))$ and $((u,w),(v,w_2))$ are edges of $G\ast H$ incident with $(u,w)$.
Therefore, each vertex of $S$ has a degree at least two in $G\ast H$.
Let $\gamma=((u,w_1),\cdots,(u,w_n))$ be a vertex sequence of $G\ast H$. Then $\gamma$ is a soft-layer. By Lemma \ref{sequ}, $md(G\ast H)\leq md(G^*\ast H)$.
Since $md(G^*\ast H)\leq md(G'\ast H)$, $md(G\ast H)\leq md(G'\ast H)$.
\end{proof}

\begin{theorem}\label{GHK4}
Let $G'$ and $H'$ be connected subgraphs of the connected graphs $G$ and $H$, respectively, and all the four graphs do not have pendent edges.
If at least one of $G'$ and $H'$ is non-bipartite, then $md(G\ast H)\leq md(G'\ast H')$.
\end{theorem}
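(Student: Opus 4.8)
The plan is to reduce Theorem~\ref{GHK4} to Lemma~\ref{GHK3} by peeling off the two subgraph reductions one factor at a time, using that the hypothesis ``no pendent edges'' is exactly the condition $\delta\geq 2$ needed to apply Lemma~\ref{GHK3}. First I would observe that since none of $G,H,G',H'$ has a pendent edge, each has minimum degree at least $2$ (each is connected with $|V|\geq 2$, so $\delta\geq 1$, and the absence of pendent edges upgrades this to $\delta\geq 2$). In particular $\delta(H)\geq 2$ and $\delta(G')\geq 2$.

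The main step is a two-stage application of Lemma~\ref{GHK3}. Stage one: apply Lemma~\ref{GHK3} with the roles ``$G$'' $=G$, ``$G'$'' $=G'$, ``$H$'' $=H$. We are given that at least one of $G'$ and $H'$ is non-bipartite; I would split into cases. If $G'$ is non-bipartite, then ``at least one of $G'$ and $H$ is non-bipartite'' holds, and $\delta(H)\geq 2$, so Lemma~\ref{GHK3} gives $md(G\ast H)\leq md(G'\ast H)$. If instead $H'$ is non-bipartite, then since $H'\subseteq H$ and a graph containing a non-bipartite (odd-cycle-containing) subgraph is itself non-bipartite, $H$ is non-bipartite, so again ``at least one of $G'$ and $H$ is non-bipartite'' holds and Lemma~\ref{GHK3} applies to yield $md(G\ast H)\leq md(G'\ast H)$. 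Either way we have reduced to $md(G'\ast H)$.

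Stage two: apply Lemma~\ref{GHK3} again, now to bound $md(G'\ast H)$ by $md(G'\ast H')$. Using commutativity of the tensor product ($G'\ast H\cong H\ast G'$, $G'\ast H'\cong H'\ast G'$), I would invoke Lemma~\ref{GHK3} with the roles ``$G$'' $=H$, ``$G'$'' $=H'$, ``$H$'' $=G'$. The hypotheses needed are $\delta(G')\geq 2$ (which we have) and that at least one of $H'$ and $G'$ is non-bipartite --- but this is precisely the standing hypothesis of the theorem. Hence $md(H\ast G')\leq md(H'\ast G')$, i.e.\ $md(G'\ast H)\leq md(G'\ast H')$. Chaining the two inequalities gives $md(G\ast H)\leq md(G'\ast H)\leq md(G'\ast H')$, which is the claim.

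The only genuinely delicate points are bookkeeping ones: verifying that ``no pendent edge'' really does force $\delta\geq 2$ for a connected graph on at least two vertices (a pendent \emph{vertex} has degree $1$, and such a vertex in a connected graph on $\geq 2$ vertices is incident to a pendent edge, so the contrapositive gives $\delta\geq 2$); and making sure the non-bipartiteness hypothesis is correctly tracked through both applications, since in stage one we may need to pass from ``$H'$ non-bipartite'' up to ``$H$ non-bipartite'' via $H'\subseteq H$. I do not expect any real obstacle beyond this casework, since Lemma~\ref{GHK3} was evidently designed with exactly this reduction in mind; the symmetry of $\ast$ is what lets the same lemma handle both coordinates.
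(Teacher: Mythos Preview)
Your proposal is correct and follows essentially the same route as the paper: two applications of Lemma~\ref{GHK3}, first reducing $G$ to $G'$ (with the $H$ factor fixed) and then, via commutativity of the tensor product, reducing $H$ to $H'$ (with the $G'$ factor fixed). In fact your write-up is slightly more careful than the paper's, which asserts ``at least one of $G'$ and $H$ is non-bipartite'' in the first step without explicitly noting that this follows from $H'\subseteq H$ when it is $H'$ that is non-bipartite.
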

\begin{proof}
Since at least one of $G'$ and $H$ is non-bipartite and $\delta(H)\geq2$, by Lemma \ref{GHK3}, $md(G\ast H)\leq md(G'\ast H)$.
Analogously, since at least one of $G'$ and $H'$ is non-bipartite and $\delta(G')\geq2$, $md(H\ast G')=md(H'\ast G')=md(G'\ast H')$.
Therefore, $md(G\ast H)\leq md(G'\ast H')$.
\end{proof}

The {\em odd girth} of a non-bipartite graph $G$ is the length of a minimum odd cycle of $G$, and we denote it by $g_o(G)$.
If $G$ is a bipartite graph, we define $g_o(G)=+\infty$, this is because a bipartite graph has no odd cycle.

\begin{corollary}
Let $G$ and $H$ be two connected non-trivial graphs both without pendent edges and at least one of them is non-bipartite. Then $md(G\ast H)\leq \min\{g_o(G),g_o(H)\}$.
\end{corollary}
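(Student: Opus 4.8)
The plan is to reduce, in two steps, to the tensor product of a cycle with a tree, and then to strip that tree down leaf by leaf using the soft-layer machinery.

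Since $G\ast H\cong H\ast G$, I may assume $g:=\min\{g_o(G),g_o(H)\}=g_o(G)$. As at least one of $G,H$ is non-bipartite, $g$ is finite, so $G$ contains a subgraph $C$ isomorphic to $C_g$; this $C$ is connected, has no pendent edge, and is non-bipartite because $g$ is odd. Applying Theorem~\ref{GHK4} with $G'=C$ and $H'=H$ (all four graphs being pendent-edge-free) gives $md(G\ast H)\le md(C_g\ast H)$. Next let $T$ be a spanning tree of $H$. Since $C_g$ is non-bipartite, $C_g\ast T$ is connected by Proposition~\ref{Te}, and it is clearly a spanning subgraph of $C_g\ast H$; hence $md(C_g\ast H)\le md(C_g\ast T)$ by Lemma~\ref{sub}. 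It therefore suffices to show that $md(C_g\ast T)\le g$ for every tree $T$ on at least two vertices.

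I would prove this by induction on $|V(T)|$. If $|V(T)|=2$, then $C_g\ast T$ is $2$-regular, connected by Proposition~\ref{Te}, and has $2g$ vertices, hence equals the cycle $C_{2g}$, and $md(C_{2g})=g$ by Proposition~\ref{block}. If $|V(T)|\ge 3$, fix a leaf $u$ of $T$ with neighbour $v$ and set $T'=T-u$, so $|V(T')|\ge 2$. In $C_g\ast T$ every vertex of the form $(i,u)$ has exactly the two neighbours $(i-1,v),(i+1,v)$, so has degree $2$; moreover, deleting any set of such vertices leaves the subgraph $C_g\ast T'$ (connected, since $C_g$ is non-bipartite and $|V(T')|\ge 2$, by Proposition~\ref{Te}) together with the surviving vertices $(j,u)$, each joined to it by two edges, so the graph stays connected. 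Thus listing the vertices $(i,u)$ in any order is a soft-layer of $C_g\ast T$, and since deleting every vertex whose second coordinate is $u$ turns $C_g\ast T$ into $C_g\ast T'$, Lemma~\ref{sequ} gives $md(C_g\ast T)\le md(C_g\ast T')\le g$ by the induction hypothesis. Chaining the three inequalities yields $md(G\ast H)\le g=\min\{g_o(G),g_o(H)\}$.

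The two reductions are essentially forced by the hypotheses, so the substance is the inductive claim, and the step I expect to need care with is peeling off a whole $C_g$-fibre over a leaf: it works only when the remaining tree $T'$ still has at least two vertices (otherwise $C_g\ast T'$ is edgeless and disconnected), which is why the single edge has to be handled separately as the base case; and the fibre vertices have degree exactly $2$ precisely because $G$ was first shrunk to the cycle $C_g$ — for a general non-bipartite $G$ one would instead be left to bound $md(G\ast K_2)$, the $md$ of a bipartite double cover, which is far less transparent.
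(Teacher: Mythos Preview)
Your argument is correct. The overall target is the same as the paper's --- reduce to $C_g\ast K_2\cong C_{2g}$ and read off $md=g$ --- but you reach it by a longer route. The paper, after shrinking $G$ to the odd cycle $O=C_g$, also shrinks $H$ to a cycle $O'$ in the same application of Theorem~\ref{GHK4} (this is where ``$H$ has no pendent edges'' is used: it guarantees $H$ contains a cycle), and then invokes Lemma~\ref{GHK3} once more, with $K_2\subset O'$ and $\delta(O)\ge 2$, to get $md(O\ast O')\le md(O\ast K_2)$. Your spanning-tree step plus leaf-by-leaf soft-layer induction is essentially a hand re-derivation of that single invocation of Lemma~\ref{GHK3} (whose proof in the paper is precisely this soft-layer peeling). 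So both proofs rest on the same machinery; the paper's is shorter because it packages the peeling into Lemma~\ref{GHK3}, while yours is more self-contained and makes the role of the degree-$2$ fibres explicit. In particular, you could shorten your proof by replacing your second and third steps with a direct appeal to Lemma~\ref{GHK3}: since $K_2$ is a connected subgraph of $H$, $C_g$ is non-bipartite, and $\delta(C_g)\ge 2$, that lemma gives $md(C_g\ast H)\le md(C_g\ast K_2)=g$ immediately.
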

\begin{proof}
Without loss of generality, suppose $G$ contains an odd cycle $O$ such that $|O|=\min\{g_o(G),g_o(H)\}$. Since $H$ has no pendent edge, $H$ has a cycle $O'$. By Lemma \ref{GHK4},
$md(G\ast H)\leq md(O\ast O')$. By Lemma \ref{GHK3}, $md(O\ast O')\leq md(O\ast K_2)$. Since $O\ast K_2$ is a $(2|O|)$-cycle, $md(O\ast K_2)=|O|=\min\{g_o(G),g_o(H)\}$.
Therefore, $md(G\ast H)\leq md(O\ast K_2)= \min\{g_o(G),g_o(H)\}$.
\end{proof}
\begin{corollary}
Let $G$ and $H$ be two connected graphs. Then
\begin{enumerate}
\item if $G$ is neither a tree nor a unicycle graph with the cycle $K_3$, and $H$ contains a triangle but does not have pendent edges, then $md(G\ast H)=1$;
\item if $|G|\geq 2$ and $H=K_n$ where $n\geq 5$, then $md(G\ast H)=1$.
\end{enumerate}
\end{corollary}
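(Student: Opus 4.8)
The plan is to deduce both parts from the reduction lemma (Lemma \ref{GHK3}) together with the base cases in Lemma \ref{KP}, keeping in mind that $md(X)\ge 1$ for every connected graph $X$, and that all the tensor products appearing are connected by Proposition \ref{Te} (each of them has a non-bipartite factor). Part (2) is the quick one: since $G$ is connected with $|G|\ge 2$ it contains an edge, so $K_2$ is a connected subgraph of $G$; and $H=K_n$ with $n\ge 5$ is non-bipartite with $\delta(K_n)=n-1\ge 2$. Hence Lemma \ref{GHK3} applies with the subgraph $K_2\subseteq G$ and the factor $H$, giving $md(G\ast K_n)\le md(K_2\ast K_n)$, which equals $1$ by Lemma \ref{KP}; combined with $md(G\ast K_n)\ge 1$ this yields equality.

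For part (1) the key structural observation is: \emph{a connected graph that is neither a tree nor a unicyclic graph whose cycle is a triangle contains a connected subgraph $G'$ with $\delta(G')\ge 2$ and $|V(G')|\ge 4$.} Indeed, $G$ is not a tree, so it has a cycle; if some cycle of $G$ has length at least $4$, take $G'$ to be that cycle, and otherwise every cycle of $G$ is a triangle but there are at least two distinct triangles (else $G$ is a tree or a unicyclic graph with cycle $K_3$), so one may take $G'$ to be the union of two distinct triangles and a shortest path of $G$ joining them (which is connected, has at least four vertices, and has minimum degree $2$). Conversely such a $G'$ cannot exist when $G$ is a tree or a unicyclic graph with cycle $K_3$, which is exactly why the hypothesis of (1) is stated in this form.

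Now fix such a $G'$. Since $H$ contains a triangle $K_3$, it is non-bipartite with $|H|\ge 3$, and having no pendent edge it has $\delta(H)\ge 2$. I would then run three applications of Lemma \ref{GHK3}, being careful each time about which factor is being shrunk and for which factor the hypotheses are checked. First, with the subgraph $G'\subseteq G$ and the factor $H$ (non-bipartite, $\delta(H)\ge 2$): $md(G\ast H)\le md(G'\ast H)$. Next, using $G'\ast H\cong H\ast G'$, with the subgraph $K_3\subseteq H$ and the factor $G'$ (which has $\delta(G')\ge 2$, and at least one of $K_3,G'$ is non-bipartite since $K_3$ is): $md(H\ast G')\le md(K_3\ast G')=md(G'\ast K_3)$. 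Finally, with the subgraph $P_3\subseteq G'$ (the path with three edges) and the factor $K_3$ (non-bipartite, $\delta(K_3)=2$): $md(G'\ast K_3)\le md(P_3\ast K_3)$; here $P_3$ is indeed a subgraph of $G'$, because a connected graph of minimum degree at least $2$ that is not a triangle contains a path on four vertices and $|V(G')|\ge 4$. Chaining the three inequalities and invoking $md(P_3\ast K_3)=1$ from Lemma \ref{KP} gives $md(G\ast H)\le 1$, hence $md(G\ast H)=1$.

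The only genuinely structural ingredient is the observation in the second paragraph; the rest is bookkeeping in the three invocations of Lemma \ref{GHK3}, and the main thing to get right is to verify the $\delta\ge 2$ and non-bipartiteness hypotheses for the \emph{correct} factor at each step and to confirm the subgraph containments $K_2\subseteq G$, $K_3\subseteq H$, and $P_3\subseteq G'$. I do not expect any deeper obstacle: the role of the hypothesis in (1) is precisely to ensure that a copy of $P_3$ still survives inside $G$ after $G$ has been shrunk, so that the base case $md(P_3\ast K_3)=1$ can be reached.
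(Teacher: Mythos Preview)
Your proof is correct and follows the same overall strategy as the paper: reduce via Lemma~\ref{GHK3} until you reach one of the base cases $md(P_3\ast K_3)=1$ or $md(K_2\ast K_n)=1$ from Lemma~\ref{KP}. The only difference is cosmetic. For part~(1), the paper takes $G'$ to be the $2$-core of $G$ (obtained by repeatedly deleting pendent vertices), which automatically has $\delta(G')\ge 2$ and is not $K_3$ under the hypothesis; it then packages the first two reductions as a single appeal to Theorem~\ref{GHK4} (strictly speaking the hypothesis ``$G$ has no pendent edges'' in that theorem is not met, but its proof only uses $\delta(G')\ge 2$ and $\delta(H)\ge 2$, which do hold). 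You instead build $G'$ explicitly as a long cycle or as two triangles joined by a path, and unroll the argument into three direct applications of Lemma~\ref{GHK3}. Both routes are equally valid; the $2$-core description is slightly slicker, while your version avoids the mild imprecision in the paper's citation of Theorem~\ref{GHK4}.
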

\begin{proof}
We prove the first result.
Let $G'$ be a graph obtained from $G$ by deleting pendent edges one by one. Since $G$ is neither a tree nor a unicycle graph with the cycle $K_3$,
$G'$ has no pendent edges and is not a $K_3$. Therefore, $G'$ contains a $3$-path, say $P$. By Theorem \ref{GHK4}, $md(G\ast H)\leq md(G'\ast K_3)$.
By Lemma \ref{GHK3} and \ref{KP}, $md(G'\ast K_3)\leq md(P\ast K_3)=1$. So, $md(G\ast H)=1$.

Since $md(G\ast K_n)\leq md(K_2\ast K_n)$ and $md(K_2\ast K_n)=1$ for $n\geq 5$ by Lemma \ref{GHK3} and \ref{KP}, respectively, the second result can be derived directly.
\end{proof}

\end{document}